\documentclass[11pt]{article}

\setlength{\textheight}{8in}
\setlength{\textwidth}{6in}
\setlength{\oddsidemargin}{0in}
\setlength{\evensidemargin}{0in}
\setlength{\parindent}{18pt}

\usepackage[english]{babel}
\usepackage[a4paper,margin=2.5cm]{geometry}
\usepackage[colorlinks=true, allcolors=blue]{hyperref}
\usepackage{amsmath}
\usepackage{amsfonts}
\usepackage{amssymb}
\usepackage{amsthm}
\usepackage{graphicx}
\usepackage{mathtools}
\usepackage{mathrsfs}
\usepackage[utf8]{inputenc}
\usepackage{xcolor}
\usepackage{verbatim}
\usepackage{float}
\usepackage{authblk}

\numberwithin{equation}{section}
\numberwithin{figure}{section}

\newtheorem{defi}{\textbf{Definition}}[section]
\newtheorem{lemma}[defi]{\textbf{Lemma}}
\newtheorem{coro}[defi]{\textbf{Corollary}}
\newtheorem{thm}[defi]{\textbf{Theorem}}
\newtheorem{prop}[defi]{\textbf{Proposition}}
\newtheorem{rmk}[defi]{\textbf{Remark}}

\newcommand{\Ac}{\mathcal{A}}
\newcommand{\Bc}{\mathcal{B}}
\newcommand{\Dc}{\mathcal{D}}
\newcommand{\Ec}{\mathcal{E}}
\newcommand{\Fc}{\mathcal{F}}

\newcommand{\Pc}{\mathcal{P}}
\newcommand{\Sc}{\mathcal{S}}
\newcommand{\Vc}{\mathcal{V}}
\newcommand{\Wc}{\mathcal{W}}
\newcommand{\Xc}{\mathcal{X}}
\newcommand{\Yc}{\mathcal{Y}}
\newcommand{\Zc}{\mathcal{Z}}

\newcommand{\Eb}{\mathbb{E}}

\newcommand{\Pb}{\mathbb{P}}
\newcommand{\Rb}{\mathbb{R}}
\newcommand{\Zb}{\mathbb{Z}}
\newcommand{\Cb}{\mathbb{C}}

\newcommand{\dist}{\mathrm{dist}}

\newcommand{\sausage}{D}
\newcommand{\annulus}{\Ac}

\newcommand{\fr}{\mathrm{fr}}
\newcommand{\eps}{\varepsilon}
\newcommand{\wh}{\widehat}
\newcommand{\wt}{\widetilde}
\newcommand{\ol}{\overline}
\newcommand{\ind}[1]{{\mathbf{1}{\{#1\}}}}

\newcommand{\intermediatepart}{\zeta}
\newcommand{\initialconfig}{\beta}

\begin{document}
\title{Convergence in natural parametrization of random walk frontier}
\author[1]{Yifan Gao\thanks{yifangao@cityu.edu.hk}}
\author[2]{Xinyi Li\thanks{xinyili@bicmr.pku.edu.cn}}
\author[2]{Runsheng Liu\thanks{liurunsheng@pku.edu.cn}}
\author[2]{Xiangyi Liu\thanks{liuxiangyi@stu.pku.edu.cn}}
\author[3]{Daisuke Shiraishi\thanks{shiraishi@acs.i.kyoto-u.ac.jp}}
\affil[1]{City University of Hong Kong}
\affil[2]{Peking University}
\affil[3]{Kyoto University}
\maketitle
\vspace{-0.5cm}
\begin{abstract}
In this paper, we show that the frontier of planar random walk converges weakly under natural parametrization to that of planar Brownian motion. As an intermediate result, we also show the convergence of the renormalized occupation measure.
\end{abstract}
	
\section{Introduction}\label{sec1:Introduction}
The \emph{Brownian frontier} is arguably one of the most basic examples to understand the multi-fractal structure of two-dimensional Brownian motion, and has been studied in depth for a long time. The Hausdorff dimension of Brownian frontier was conjectured to be $4/3$ by Mandelbrot \cite{M82} in 1982. Based on a moment argument, Lawler \cite{L96} showed that the Hausdorff dimension of Brownian frontier is $2-\xi(2)$, where $\xi(\cdot)$ is the so-called \textit{disconnection exponent} (we refer readers to Section \ref{sec2.2:prelim} for a brief introduction). The values of such exponents are first predicted by Duplantier and Kwon \cite{DK88}, and later rigorously determined by Lawler, Schramm and Werner in a series of pioneer works \cite{LSW01,LSW01a,LSW01b,LSW02,LSW02a} using Schramm-Loewner evolution (SLE) techniques. Moreover, they further show in \cite{LSW03}  that the outer boundary of Brownian excursion is distributed as some variant of $\mathrm{SLE}_{8/3}$. 
\par
Based on the convergence from random walk to Brownian motion, it is quite natural to ask, which kind of convergence can be established for their outer boundaries. As an intermediate result of  \cite{VCL16} (which actually focuses on loop soups), the \emph{random walk frontier} converges under the Hausdorff metric to the Brownian frontier under a proper setup. In this work, we further prove the convergence in the stronger sense of \emph{natural parametrization}; see Theorem \ref{thm:convergence under np}. For this, we first establish the convergence of the renormalized occupation measure of random walk frontier following the work \cite{HLLS22,GLPS23} on cut points. It is based on a standard $L^{2}$-approximation, which is a powerful method and has been widely used in different context, e.g.\ percolation \cite{GPS13,HLS22}, loop-erased random walk \cite{LV16,LS24}, and SLE \cite{L15,LR15,Z22}. Viewing random walk frontier as a continuous random curve, we then show the convergence modulo time-parametrization and finally under natural parametrization, by showing tightness and the uniqueness of subsequential limit in both cases.
\par

In the following, we will state our results on convergence under natural parametrization and convergence of occupation measure in Sections~\ref{sec1.1:Convergence under natural parametrization 1} and~\ref{sec1.2:occupation}, respectively.

\subsection{Convergence under natural parametrization} \label{sec1.1:Convergence under natural parametrization 1}

Suppose $A \subseteq \mathbb{C}$ is a bounded set. The \emph{frontier} of $A$, denoted by $\fr(A)$, is defined as the boundary of the unbounded connected component of $\mathbb{C}\setminus A$. 
For any $\delta\in [0,2]$, its \emph{$\delta$-Minkowski content} is defined as the following limit (provided the existence) 
\begin{equation}\label{eq:def of mink}
    \mathrm{Cont}_{\delta}(A):=\lim_{\eps\to 0} \eps^{\delta-2}\mathrm{Area}(A^\eps),\ \mathrm{where}\ A^\eps:=\{z\in\Cb: \dist(z,A)\le\eps\}.
\end{equation}
    
Let $W$ be a Brownian motion starting from the origin and denote by $T_0 = \inf\{t:|W(t)|=1\}$ the time when $W$ first hits the unit circle.
Let $\wt\gamma$ be the curve that traces $\fr(W[0,T_0])$ from $W(T_0)$ in the counterclockwise direction and ends at $W(T_0)$ (it is indeed a non-simple loop surrounding the origin \cite{Q21}). In \cite{LR15}, it is shown that the $(1+\kappa/8)$-Minkowski content of SLE$_\kappa$ exists. Since the Brownian frontier is a variant of SLE$_{8/3}$ (see \cite{LSW03,Q21} for details), locally, the Brownian frontier is absolutely continuous with respect to SLE$_{8/3}$. Therefore, the $4/3$-Minkowski content for the Brownian frontier exists. And the natural parametrization of $\widetilde\gamma$ is the one such that 
\begin{equation}
    \mathrm{Cont}_{4/3}(\widetilde\gamma[0,t])=t,\ 0\le t\le \mathrm{Cont}_{4/3}(\widetilde\gamma)\qquad\mbox{ a.s.}
\end{equation}
Let $\lambda_n$ be a simple random walk on $\Zc_n=e^{-n}\Zb^2$ and denote its first exiting time of the unit disk by $\tau_0$. We consider the continuous curve $\gamma_n$ that traces the random walk frontier $\fr(\lambda_n[0,\tau_0])$ from $\lambda_n(\tau_0)$ in the counterclockwise direction and ends at $\lambda_n(\tau_0)$, traversing each edge with time $c_1 e^{-4n/3}$ (here, the constant $c_1$, which appears in \eqref{eq:discrete occupation measure}, will be defined in Theorem \ref{thm:one-point1}). 

Let $\Pc = \{\gamma:[0,t_\gamma]\to\Rb^2\}$ denote the collection of all the curves in $\Rb^2$, where $t_\gamma$ is the time duration of $\gamma$. Equip $\Pc$ with the natural parametrization metric formally defined by 
\begin{equation}\label{eq:def rho}
    \rho(\gamma,\gamma'):=\inf_{\alpha}\Big[ \sup_{0\leq t \leq t_\gamma} |\alpha(t)-t|+ \sup_{0\leq t \leq t_\gamma}\big|\gamma'(\alpha(t))-\gamma(t)\big|\Big],
\end{equation}
where the infimum is taken over all continuous bijections $\alpha:[0,t_\gamma]\to[0,t_{\gamma'}]$. 
We now briefly introduce our main result. 

\begin{thm}\label{thm:convergence under np}
	$\gamma_n$ converges to $\widetilde{\gamma}$ under the natural parametrization metric $\rho$.
\end{thm}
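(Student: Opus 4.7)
The plan is to follow the three-step strategy announced in the introduction: (i) establish convergence of the renormalized occupation measure on $\fr(\lambda_n[0,\tau_0])$ to the $4/3$-Minkowski content measure on $\wt\gamma$; (ii) upgrade the Hausdorff convergence of the frontier from \cite{VCL16} to convergence of $\gamma_n$ modulo time-parametrization; and (iii) combine (i) and (ii) to identify the limit parametrization as the natural one.

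For step (i), I would introduce the random measure
\begin{equation*}
\mu_n := c_1 e^{-4n/3} \sum_{x \in \Zc_n} \ind{x \in \fr(\lambda_n[0,\tau_0])}\, \delta_x
\end{equation*}
and its continuum counterpart $\wt\mu$, the $4/3$-Minkowski content measure on $\wt\gamma$. To prove $\mu_n \Rightarrow \wt\mu$ weakly it suffices to show $L^2$-convergence when tested against a suitable class of continuous functions. Convergence of the first moment follows by combining Theorem~\ref{thm:one-point1} with a strong coupling (e.g.\ KMT) of $\lambda_n$ to $W$ and the Brownian one-point Green function for the frontier. The delicate point is a matching two-point estimate, heuristically of the form
\begin{equation*}
\Pb\big(x,y \in \fr(\lambda_n[0,\tau_0])\big) \asymp |x-y|^{-2/3}\, \Pb\big(x \in \fr(\lambda_n[0,\tau_0])\big)\, \Pb\big(y \in \fr(\lambda_n[0,\tau_0])\big),
\end{equation*}
valid at macroscopic separation and with controlled corrections down to the microscopic scale, from which $\Eb[\mu_n(f)^2]$ is bounded and then shown to converge to $\Eb[\wt\mu(f)^2]$.

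For step (ii), I would couple $\lambda_n$ to $W$ via KMT and combine the Hausdorff convergence of the frontier with a quantitative equicontinuity estimate: an annulus-crossing bound for the discrete frontier rules out long thin sub-arcs on small spatial scales, so any subsequential limit of $\gamma_n$ modulo parametrization inherits $\wt\gamma$ as its image and winds counterclockwise once about the origin, forcing it to be a continuous reparametrization of $\wt\gamma$. For step (iii), tightness of $\gamma_n$ in $(\Pc,\rho)$ follows from two ingredients both supplied by the one-point estimate in step (i): tightness of the total duration $t_{\gamma_n} = c_1 e^{-4n/3}\,|\fr(\lambda_n[0,\tau_0])|$, and a uniform $\rho$-modulus bound obtained by controlling the number of frontier edges inside any ball of radius $\eps$ by $O(\eps^{4/3} e^{4n/3})$ in probability. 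Given tightness, any subsequential limit $\gamma^*$ is a reparametrization of $\wt\gamma$ by (ii) whose occupation measure equals $\wt\mu$ by (i); the latter forces $\mathrm{Cont}_{4/3}(\gamma^*[0,t]) = t$, so $\gamma^* = \wt\gamma$ in natural parametrization, and the full-sequence convergence follows.

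The main analytic obstacle I anticipate is the uniform two-point estimate in step (i). Controlling $\Pb(x,y \in \fr(\lambda_n[0,\tau_0]))$ requires a refined analysis of the simultaneous disconnection events at $x$ and at $y$, combining disconnection-exponent asymptotics near each microscopic neighborhood with a decoupling argument at the intermediate scale $|x-y|$, and extracting the correct exponent $2/3$ uniformly in the geometry of $\{x,y\}$ relative to the unit disk. Once this is secured, the Hausdorff-convergence upgrade in (ii) and the passage to natural parametrization in (iii) proceed along well-established lines of the $L^2$-approximation schemes of \cite{HLLS22,GLPS23,LV16,LS24}.
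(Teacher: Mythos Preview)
Your three-step outline mirrors the paper's strategy, and step (i) is indeed the technical core, carried out in the paper through an intermediate ``frontier-disk'' device much as in \cite{GLPS23}. However, there is a genuine gap in your step (ii).

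You claim that Hausdorff convergence of the trace together with an annulus-crossing equicontinuity bound forces any subsequential limit modulo reparametrization to be a reparametrization of $\wt\gamma$. This does not follow: equicontinuity gives tightness, and Hausdorff convergence identifies the trace of any subsequential limit, but neither rules out \emph{backtracking}. A limit curve could trace a sub-arc of $\wt\gamma$, retrace it in reverse, and then proceed; such a curve has the correct image and still winds once about the origin, yet it is not a reparametrization of $\wt\gamma$ (and recall that $\wt\gamma$ is a non-simple loop, so winding number alone pins down even less). The paper closes this gap with a four-arm argument: a backtracking of the limit forces, at arbitrarily small scales, four crossings of an annulus by the underlying walk with a pairwise non-intersection structure governed by the exponent $\xi(2,2)=35/12>2$, so the expected number of such ``bad'' disks tends to zero and backtracking is excluded almost surely. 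This four-arm input is the key idea missing from your (ii).

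A smaller point in step (iii): you propose to obtain tightness in $\rho$ directly from a uniform bound on the number of frontier edges in \emph{every} ball of radius $\eps$, citing only the one-point estimate. The one-point estimate controls the expected count in a fixed ball; upgrading to a uniform-in-center bound with high probability requires more (second moments plus a covering argument, at least). The paper sidesteps this by arguing tightness in $\rho$ by contradiction, using the already-established occupation-measure convergence from step (i): a large oscillation of $\gamma_n$ on a short time interval would produce a rational disk with positive $\nu$-mass but vanishing $\nu_n$-mass, contradicting \eqref{eq:convergence of indicator}.
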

For the proof of Theorem \ref{thm:convergence under np}, we will first show the convergence modulo time-parametrization. To this end, we make use of \cite{AB99} to show tightness, and the uniqueness of subsequential limit will follow from the convergence under Hausdorff metric (see \cite{VCL16}). This convergence, combined with the convergence of occupation measure (see Theorem \ref{thm:main result 0}), will conclude the proof.

\subsection{Convergence of occupation measure}\label{sec1.2:occupation}
Recall from \eqref{eq:def of mink} the definition of Minkowski content. One can define an almost surely regular non-atomic finite Borel measure $\nu$ by letting 
\begin{equation}
    \nu (\cdot):=\mathrm{Cont}_{4/3}(\cdot\cap\widetilde\gamma).
\end{equation}
As for random walks, we can define the occupation measure as follows:
\begin{equation}\label{eq:discrete occupation measure}
	\nu_n:=c_1e^{-{4n/3}}\sum_{x\in \mathcal{Z}_n \cap \fr(\lambda_n) }\delta_x,
\end{equation}
where $\delta_x$ is a unit point mass at $x$ and $c_1$ is the positive constant defined in Theorem~\ref{thm:one-point1}. The following theorem establishes the convergence of occupation measure. 
\begin{thm} \label{thm:main result 0}
The law of $\nu_n$ converges to that of $\nu$ with respect to the topology of weak convergence of finite measures.
\end{thm}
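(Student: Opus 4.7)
The plan is to follow the standard $L^2$-approximation scheme for random measures, announced in Section~\ref{sec1:Introduction} and successfully carried out for the cut-point measure in \cite{HLLS22,GLPS23}; the key inputs are sharp one-point and two-point estimates for frontier points, combined with a strong coupling of $\lambda_n$ with the Brownian motion $W$. The one-point estimate is the content of Theorem~\ref{thm:one-point1}, which also specifies the normalizing constant $c_1$: for a lattice point $x \in \Zc_n$ at macroscopic distance from $0$ and from the unit circle,
$$\Pb\bigl(x \in \fr(\lambda_n[0,\tau_0])\bigr) \;=\; c_1^{-1}\,e^{-2n/3}\,g(x)\,(1+o(1)),$$
for an explicit continuous density $g$ on the open unit disk. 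The companion two-point estimate I would establish is of the form
$$\Pb\bigl(x,y \in \fr(\lambda_n[0,\tau_0])\bigr) \;\le\; C\,e^{-4n/3}\,g(x)\,g(y)\,|x-y|^{-2/3},$$
with short-range exponent $2/3 = 2-4/3$ dictated by the Hausdorff dimension of the frontier; both estimates should be obtained from the usual quasi-multiplicativity and separation technology for the non-disconnection event, applied at all dyadic scales between $|x-y|$ and $1$.

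Given these moment inputs, for every bounded continuous $f$ the Riemann-sum identity
$$\Eb[\langle \nu_n, f\rangle] \;=\; c_1\,e^{-4n/3}\sum_{x \in \Zc_n} f(x)\,\Pb\bigl(x \in \fr(\lambda_n[0,\tau_0])\bigr) \;\longrightarrow\; \int f(x)\,g(x)\,dx$$
yields convergence of first moments; a parallel double-sum computation, dominated via the integrable kernel $|x-y|^{-2/3}$ on $\Rb^2\times\Rb^2$, gives $\Eb[\langle \nu_n, f\rangle^2] \to \Eb[\langle \nu, f\rangle^2]$ once the continuum two-point density is identified with the help of the SLE$_{8/3}$ description of $\widetilde\gamma$ (\cite{LSW03,Q21}) together with the Minkowski-content results of \cite{LR15}. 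In particular, $\{\langle \nu_n, f\rangle\}_n$ is bounded in $L^2$ and has the correct limiting first two moments.

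The final step is to upgrade moment information to convergence in law of the random measures. I would couple $\lambda_n$ with $W$ via a strong (e.g.\ dyadic / KMT-type) approximation, on which the random walk frontier sits close to the Brownian frontier (cf.\ \cite{VCL16}), and expand $\Eb\bigl[(\langle \nu_n, f\rangle - \langle \nu, f\rangle)^2\bigr]$ on this coupling. The cross term $\Eb\bigl[\langle \nu_n, f\rangle\langle \nu, f\rangle\bigr]$ is treated by the mixed discrete--continuous two-point function (again bounded by the same $|x-y|^{-2/3}$ envelope), and all three quadratic terms share the same limit, giving $L^2$- and hence in-probability convergence $\langle \nu_n, f\rangle \to \langle \nu, f\rangle$. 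Applying this for a countable dense family of test functions $f$, together with the tightness of $\{\nu_n\}$ as finite Borel measures on the closed unit disk---immediate from $\sup_n \Eb[\nu_n(\Cb)] < \infty$---yields the desired weak convergence.

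The main obstacle is the two-point estimate with the correct short-range exponent $2/3$ and the identification of its small-scale limit with the continuum two-point density of $\nu$: this requires quasi-multiplicativity at all intermediate scales together with a careful transfer of the discrete non-disconnection event to its Brownian counterpart under the coupling. Once this matching is achieved, the remaining steps---Riemann-sum approximation, $L^2$-coupling, tightness---follow the well-established template of \cite{HLLS22,GLPS23}.
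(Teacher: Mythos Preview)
Your overall $L^2$-approximation scheme is correct and is the same scheme the paper uses. The gap is in the cross term $\Eb[\langle\nu_n,f\rangle\langle\nu,f\rangle]$, which you describe as ``treated by the mixed discrete--continuous two-point function'': this is where the real difficulty lies, and your proposal does not supply the mechanism that makes it work.

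The problem is that neither the discrete frontier-point event $\{x\in\fr(\lambda_n)\}$ nor the continuum near-frontier event $\{\dist(y,\fr(W))\le e^{-s}\}$ is stable under the path perturbation provided by Skorokhod or KMT coupling: an $O(e^{5n/8})$ perturbation of the path can create or destroy a frontier point at a fixed site. An upper bound of order $|x-y|^{-2/3}$ on the mixed two-point function gives you uniform integrability, but not the \emph{sharp} matching you need for the three quadratic terms to agree in the limit.

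The paper resolves this by inserting a coupling-stable intermediary, the \emph{frontier-disk event} $K_s(z)$ (Definition~\ref{def:non-disconnecting disk event continuous} and its discrete analogue): the disk $\Dc_{-s}(z)$ is hit, the first/last pieces of the path do not disconnect it from~$\infty$, and the intermediate excursion stays in a slightly larger disk. This event \emph{is} robust under mesoscopic perturbation, so under the Skorokhod embedding one gets $\Pb(K_{3n/4}(z))\simeq\Pb(\widetilde K^{(n)}_{3n/4}(z))$ and the mixed second-moment statement $\Pb(K_{3n/4}(z)\cap\Ac_n(w))\simeq\Pb(\widetilde K^{(n)}_{3n/4}(z)\cap\Ac_n(w))$ (Lemmas~\ref{lem:compare K with tildeK} and~\ref{lem:second moment discrete and continuum}). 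Separately, the disk event is linked to the point event by a universal factor $f(n)$ via an inward coupling (Propositions~\ref{thm:one-point disk and point}--\ref{thm:two-point disk and point}), and the continuum frontier-disk measure $\widetilde\nu_s$ is matched to the Minkowski-content measure $\nu$ (Theorem~\ref{thm:tilde_nu_s and nu}). The $L^2$ distance collapses along the chain
\[
\nu_n \;\longleftrightarrow\; \text{discrete frontier disk} \;\longleftrightarrow\; \text{continuum frontier disk} \;\longleftrightarrow\; \nu,
\]
each link contributing $O_V(e^{-un})$ on nice boxes (equation~\eqref{eq:convergence of indicator}), and one finishes by approximating $g$ by step functions.

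This disk device is already the heart of the cut-point template in \cite{HLLS22,GLPS23}, so your closing sentence is right in spirit; but the explicit plan you wrote for the cross term bypasses it, and a direct mixed two-point computation without such a robust intermediary does not appear to go through.
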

The proof of Theorem \ref{thm:main result 0} follows a strategy similar to \cite{GLPS23}. We are able to obtain one-point and two-point estimates for frontier Green's function (see Section \ref{sec3:Continuum frontier disks and frontier Green's function} for a precise definition). And we can also deduce sharp estimates for frontier-disk events, which allows us to control the $L^2$-distance of the occupation measure induced by frontier and frontier-disk. Finally, we control the cross-term (i.e.\ the $L^2$-distance between the occupation measure induced by discrete and continuum frontier-disk) by using Skorokhod embedding to couple random walk and Brownian motion.

\subsection{Organization of the paper}\label{sec1.3:Organization of the paper}

In Section \ref{sec2:notation}, we introduce some basic notation related to Brownian paths, Brownian measures and discrete paths, and review some preliminary results for disconnection exponents, separation lemmas and coupling techniques. 

Sections \ref{sec3:Continuum frontier disks and frontier Green's function}-\ref{sec5:Convergence of occupation measure} are devoted to proving Theorem~\ref{thm:main result 0}. Section \ref{sec3:Continuum frontier disks and frontier Green's function} introduces the frontier-disk events and the frontier Green's function, while Section \ref{sec4:Moment estimates of frontier points and frontier disks} focuses on the relations between frontier points and disks. In Section \ref{sec5:Convergence of occupation measure}, we complete the proof of Theorem~\ref{thm:main result 0}, the convergence of occupation measure, which also plays an important role in deriving the convergence under natural parametrization. 

In Section \ref{sec6:Convergence under natural parametrization}, we derive Theorem \ref{thm:convergence under np} by proving a weaker convergence (convergence under reparametrization) first and making use of the convergence of occupation measure.

\bigskip

\noindent {\bf Acknowledgements}: Xinyi Li wishes to thank Greg Lawler for suggesting this problem and for helpful discussions. Yifan Gao is supported by National Key R\&D Program of China (No.\ 2023YFA1010700). Xinyi Li, Runsheng Liu and Xiangyi Liu are supported by National Key R\&D Program of China (No.\ 2021YFA1002700 and No.\ 2020YFA0712900). Daisuke Shiraishi is supported by JSPS Grant-in-Aid for Scientific Research (C) 22K03336, JSPS Grant-in-Aid for Scientific Research (B) 22H01128
and 21H00989.

\section{Notation and Preliminaries}\label{sec2:notation}
In this section, we lay the groundwork for the subsequent proof. In Section \ref{sec2.1:notation2}, we introduce the necessary notation and basic concepts that will be used extensively throughout this paper. In Section \ref{sec2.2:prelim}, we present several fundamental results concerning Brownian motion and SRW, which will be useful in analyzing the non-disconnection event and coupling the discrete and continuous paths.
\subsection{Notation}\label{sec2.1:notation2}
\paragraph{Conventions.}
Throughout this paper, we use the notation $c,C,u$ to represent positive constants whose values may vary across different lines. In contrast, constants such as $c_1,c_2$ are reserved for fixed positive constants. 
Suppose $a_n$ and $b_n$ are two positive sequences. We will use the following notation:
\begin{itemize}
	\item $a_n\lesssim b_n$ or $a_n=O(b_n)$ means there exists a positive constant $c$ such that $a_n\le c b_n$ for all $n$.
	\item $a_n\asymp b_n$ indicates that $a_n$ and $b_n$ are comparable. In other words, there exist positive constants $c_1$ and $c_2$ such that $c_1 b_n\le a_n\le c_2 b_n$ for all $n$.
	\item $a_n\simeq b_n$ holds if there exists a positive constant $u$ such that $a_n=b_n(1+O(e^{-un}))$.
\end{itemize}
We add subscripts to these symbols to indicate that the associated constants may depend on the introduced subscripts.
\paragraph{Sets in $\mathbb R^2$.}
For $z\in \Rb^2$ and $r>0$, we write 
\begin{align*}
	D_r(z)=D(z,r):=\{ x \in \Rb^2 : \left | x-z \right | < r \}\ \ \mathrm{and}\ \ 
    B_r(z)=B(z,r):=D_r(z)\cap \Zb^2.
\end{align*}
To simplify notation in exponential scaling scenarios, we introduce
\begin{equation*}
	\mathcal{D}_r(z):=D_{e^r}(z)\ \ \mathrm{and}\ \ \mathcal{B}_r(z):= D_{e^r}(z)\cap \Zb^2. 
\end{equation*}
When $z=0$, we omit $z$ and simply write $\Dc_r$ and $\Bc_r$. The symbol $\mathcal{D}$ specifically denotes the unit disc $D(0,1)$. For $A\subseteq \Rb^2$, the $\delta$-sausage of $A$ is defined as 
\[
\sausage(A,\delta):=\{x\in\Rb^2:\dist(x,A)\le \delta\}.
\]
The annulus centered at $x$ with radii $r$ and $R$ is denoted by
\[
\annulus(x,r,R) := \{y\in \Rb^2 : r\leq |y-x|\leq R\}.
\]
We also give notation in the discrete setting. If $A\subseteq \Zb^2$, define the outer boundary of $A$ as
\begin{align*}
	\partial A := \{x\in \Zb^2\setminus A: \exists\, y\in A\ \mathrm{such\ that}\ |x-y|=1\}.
\end{align*}

\paragraph{Paths.}
Let $\Pc$ denote the collection of all continuous curves in $\Rb^2$. For a curve $\gamma \in \Pc$ and a subset $D \subseteq \Rb^2$, the hitting time is defined as 
\[    
H_D(\gamma):=\mathrm{inf}\{t\ge 0:\gamma(t) \in D\}.
\]
For a Brownian motion W and a simple random walk S, we respectively denote 
\[T_r:= H_{\partial \mathcal{D}_r}(W)\ \ \mathrm{and}\ \ \tau_n:=H_{\partial \mathcal{B}_n}(S).\]
The concatenation of paths $\gamma[0,t]$ and $\gamma'[0,t']$ (with $\gamma(t)=\gamma'(0)$) is defined as 
\[
\gamma\oplus\gamma'(s) := \gamma(s)\ind{0\le s\le t}+\gamma'(s-t)\ind{t< s\le t+t'}.
\]
A discrete path $\lambda$ in the integer lattice $\Zb^2$ is defined as an ordered sequence of vertices
\[
\lambda =\big(\lambda(0),\lambda(1),\ldots,\lambda(\mathrm{len}(\lambda))\big),
\]
where $\mathrm{len}(\lambda)$ denotes its length, and adjacent vertices satisfy $|\lambda(j)-\lambda(j-1)|=1$. To interpret $\lambda$ as an element of the continuous path space $\Pc$, we linearly interpolate between consecutive vertices. Each edge is traversed in $1/2$ units of time, resulting in a total duration of $t_\lambda = \mathrm{len}(\lambda)/2$. For interpolated paths $\gamma$ derived from discrete paths, the hitting time $H_D(\gamma)$ is interpreted as $H_{D\cap \Zb^2}(\gamma)$. This slight abuse of notation ensures compatibility with earlier definitions when applied to lattice-based paths.

\paragraph{Path measures.}
For any finite measure $\mu$, its total mass is denoted by $\left \| \mu\right \| $. We define the normalized measure as $\widehat{\mu}=\mu / \left \| \mu\right \| $. If $g$ is a function defined on the support of $\mu$, then the integral of $g$ with respect to $\mu$ can be written as $\mu(g)$ or $\mu[g]$.

Let $D\subseteq\Cb$ be a domain with piecewise analytic boundary $\partial D$. We will use the following (Brownian) path measures in $D$ frequently. We refer to \cite[Chapter 5]{L08} for details on the construction of these measures.

\medskip

\noindent\textbf{(1) Interior-to-interior path measure.} For $z,w\in D$, let $\mu_D(z,w,t)$ be the probability measure on Brownian bridges from $z$ to $w$ of duration $t$, conditioned on staying in $D$. The interior-to-interior measure $\mu_{z,w}^D$ is then obtained by integrating over time.
\[
\mu_{z,w}^D:=\int_0^\infty \mu_D(z,w,t)dt.
\]

\medskip

\noindent\textbf{(2) Interior-to-boundary path measure.} For $z\in D$, let $W$ be the Brownian motion started from $z$ and $T_{\partial D}$ be the hitting time of ${\partial D}$ by $W$.
Let $\mu^D_{z,\partial D}$ be the probability measure induced by $(W(s))_{0\le s \le T_{\partial D}}$. 
Moreover, for $w\in\partial D$, we use  $\mu^D_{z,w}$ (with slight abuse of notation) to denote the interior-to-boundary path measure, given by $(W(s))_{0\le s \le T_{\partial D}}$ conditioned on $W(T_{\partial D})=w$.

\medskip

\noindent\textbf{(3) Boundary-to-boundary path measure.} For $z,w \in \partial D$, the boundary-to-boundary path measure is defined via 
\[
\mu^D_{z,w}:=\lim_{\varepsilon \to 0}\varepsilon^{-1}\mu^D_{z+\varepsilon n_z,w},
\]
where $n_z$ is the inward normal vector at $z$. By integration, we extend this to measures $\mu_{z,\partial D}^D$,\ $\mu_{\partial D,w}^D$ and $\mu_{\partial D,\partial D}^D$. 
These measures are also called excursion measures in $D$. As for random walks, one can also define RW path measures $\nu^D_{z,w}$, $\nu^D_{z,\partial D}$, $\nu^D_{\partial D,w}$ and $\nu^D_{\partial D,\partial D}$ by summing over the total mass of the corresponding paths. 

\subsection{Preliminaries}\label{sec2.2:prelim}
\paragraph{Intersection and disconnection exponents.} Let $W^0,\ldots,W^k$ be independent planar Brownian motions starting from the origin. For $r>0$, define the random variable:
\begin{equation}
    Z_r=Z_r(W^1,\ldots,W^k) = \Pb\big(\,W^0[T_0,T_r]\cap(W^1[T_0,T_r]\cup\cdots\cup W^k[T_0,T_r]) = \emptyset \,\big|\,W^1,\ldots,W^k\,\big).
\end{equation}
Note that $Z_{r}$ is a random variable which is a function of $W^1,\ldots,W^k$.
For $\lambda>0$, a standard subadditivity argument shows that the limit
\begin{equation}
\xi(k,\lambda):=-\lim_{r\to \infty} \frac{\log  \Eb(Z_r^\lambda)}{\log r}
\end{equation}exists and $\xi(k,\lambda)$ are called the {\it intersection exponent}. 
Taking $\lambda\to 0$, the {\it disconnection exponent} $\xi(k)$ is defined to be the limit
\begin{equation}
\xi(k):=-\lim_{r\to \infty} \frac{\log \Pb(Z_r>0)}{\log r} .
\end{equation}
In \cite{LSW01a,LSW01b,LSW02,LSW02a}, the intersection exponents are analytically derived through the expression
\begin{equation}\label{eq:intersection exponents}
    \xi(k,\lambda) = \frac{(\sqrt{24k+1}+\sqrt{24\lambda+1}-2)^2-4}{48},\ k\in\mathbb N^*,\ \lambda>0.
\end{equation}
The disconnection exponent $\xi(k)$ emerges naturally as the limit of $\xi(k,\lambda)$ when $\lambda\to 0^+$,  characterizing the decay rate of path disconnection probabilities. 
For the special case $k=1$, the exponent reduces to $\xi(1)=1/4$, which governs the asymptotic behavior of one-arm non-disconnection probabilities. This result is stated as the lemma below; see e.g.\ \cite{L98} for details.
\begin{lemma}[One-arm non-disconnection estimates] \label{one-arm disconnection exponent}Let $r<R$, and consider a Brownian motion $W$ starting from $x_0 \in\partial\Dc_r$ and stopped upon hitting $\partial \Dc_R$. Denote by $\widetilde{D}_{r,R}$ the event that $W$ does not disconnect $\partial\Dc_r$ and $\infty$. Then, 
\begin{equation}\label{eq:one-arm event}
\Pb^{x_0}\big(\widetilde{D}_{r,R}\big)\asymp e^{-(R-r)/4}.
\end{equation}\end{lemma}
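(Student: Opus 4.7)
The exponent in the target bound is $\xi(1) = 1/4$, which one reads off from the intersection exponent formula \eqref{eq:intersection exponents} specialized to $k = 1$ and $\lambda \to 0^+$: since $\sqrt{25} + \sqrt{1} - 2 = 4$, one gets $\xi(1) = (16-4)/48 = 1/4$. The definition of $\xi(1)$ only provides the sharp exponential rate, i.e.\ $\Pb(Z_r > 0) = r^{-1/4 + o(1)}$; the task of the proof is to upgrade this to the two-sided comparability $\asymp$, uniformly in $r$, $R$, and the starting point $x_0$.

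For the upper bound, I would iterate the strong Markov property at the successive hitting times $T_{r+1}, T_{r+2}, \ldots, T_{r+n}$ with $n = \lfloor R-r \rfloor$, using the inclusion $\wt D_{r,R} \subseteq \bigcap_{k=0}^{n-1} \wt D_{r+k, r+k+1}$. Scale and rotational invariance reduce each one-step factor to a fixed supremum of the unit-annulus non-disconnection probability, and a standard submultiplicativity argument, together with the identification of the exponential rate via $\xi(1) = 1/4$, yields $\Pb^{x_0}(\wt D_{r,R}) \lesssim e^{-(R-r)/4}$.

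The lower bound is the more delicate half, and would proceed by a separation lemma of the type developed in Lawler's work. The idea is to condition on the additional good event that, at each exit time $T_{r+k}$, the tip of the Brownian path is at definite angular distance on $\partial \Dc_{r+k}$ from the harmonic-measure support of its past trajectory. This conditioning costs only a bounded factor, and it provides a regeneration structure: given a separated configuration at scale $e^{r+k}$, the conditional probability of traversing the next annulus without disconnection while restoring separation at scale $e^{r+k+1}$ is uniformly bounded below by a constant multiple of the unconditional one-step probability. Chaining $n$ such steps and matching the exponential rate via $\xi(1) = 1/4$ gives $\Pb^{x_0}(\wt D_{r,R}) \gtrsim e^{-(R-r)/4}$, which together with the upper bound yields $\asymp e^{-(R-r)/4}$.

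The main obstacle is the separation lemma itself, namely showing that well-separatedness is typical conditional on non-disconnection, so that the conditioning does not cost an exponential factor. This requires careful estimates on harmonic measure on $\partial \Dc_{r+k}$ as seen from the tip of the path, together with a "restart from separated configurations" step ensuring that separation, once lost, is restored with uniformly positive probability after one more macroscopic annulus crossing. Once such a separation lemma is in hand, the remainder of the proof is a routine combination of the strong Markov property, scale invariance, and the identification $\xi(1) = 1/4$.
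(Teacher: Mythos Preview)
The paper does not prove this lemma; it is stated with a reference to \cite{L98} and taken as known. Your proposal correctly identifies the two standard ingredients (sub/super\-multiplicativity together with a separation lemma), but you have assigned them to the wrong inequalities.

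The inclusion $\wt D_{r,R}\subseteq\bigcap_k \wt D_{r+k,r+k+1}$ combined with the strong Markov property gives \emph{submultiplicativity} $p_{m+n}\le p_m p_n$, writing $p_n$ for the probability across an annulus of conformal modulus $n$. Fekete's lemma applied to the superadditive sequence $-\log p_n$ then yields $-\log p_n\le n\,\xi(1)$ for every $n$, i.e.\ $p_n\ge e^{-n/4}$. Thus submultiplicativity delivers the \emph{lower} bound $\gtrsim e^{-(R-r)/4}$ for free. Iterating $p_n\le p_1^n$ as you suggest only produces an exponential upper bound with rate $-\log p_1$, and Fekete guarantees $-\log p_1\le 1/4$, so this does \emph{not} give $\lesssim e^{-(R-r)/4}$.

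The separation lemma is what yields the \emph{upper} bound. It shows that, conditional on $\wt D_{r,R}$, the ``opening'' at scale $e^R$ is macroscopically wide with uniformly positive probability; from such a configuration one can extend non-disconnection to scale $e^{R+n}$ with probability $\gtrsim p_n$, giving supermultiplicativity $p_{m+n}\ge c\,p_m p_n$. Fekete applied to $\log(c\,p_n)$ then gives $p_n\le c^{-1}e^{-n/4}$. So the main obstacle you identify is real, but it is needed for the opposite inequality to the one you attribute it to.
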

Here, for Brownian motions $W_1,\ldots,W_k$ starting from $x_1,\ldots,x_k$, we denote the associated probability measure by $\Pb^{x_1,\ldots,x_k}$. Its expectation is denoted by $\Eb^{x_{1}, \ldots , x_{k}}$. We use $\Pb$ and $\Eb$ when $x_{i} = 0$ for all $i$. With slight abuse of notation, we extend these symbols $\Pb^{x_1,\ldots,x_k}$ and $\Eb^{x_{1}, \ldots , x_{k}}$ to also represent the probability measure and its expectation corresponding to SRWs starting from $x_1,\ldots,x_k$. We also write $\Pb$ and $\Eb$ when $x_{i} = 0$ for all $i$.
\paragraph{Two-arm non-disconnection probabilities.} 
\textbf{(1) Continuous framework.} 
Let $\widetilde\Gamma_r$ denote the collection of paths $\initialconfig$ satisfying $\initialconfig(0)=0$, $|\initialconfig(t_{\initialconfig})|=e^r$ and $|\initialconfig(t)|<e^r$ for all $0\le t<t_{\initialconfig}$. The set of pairs of non-disconnecting paths is defined as
\begin{equation}
	\widetilde{\mathcal{Y}}_r:=\big\{ \ol\initialconfig=(\initialconfig^1,\initialconfig^2)\in \widetilde{\Gamma}_r\times \widetilde{\Gamma}_r: 0\in \fr(\initialconfig^1\cup\initialconfig^2)\big\}.
\end{equation}
Write $\widetilde{\mathcal{Y}}=\widetilde{\mathcal{Y}}_0$. 
For a pair $\ol\initialconfig\in \widetilde{\mathcal{Y}}$ with endpoints $\ol x=(x_1,x_2)$ on $\partial\mathcal{D}$, consider two independent Brownian motions $W^1$, $W^2$ starting from $x_1$, $x_2$ respectively. For $n>0$ and $i=1,2$, define the concatenated paths under the scaling $e^{-n}$ as
$$\widehat\initialconfig^i_n:=e^{-n}\big(\initialconfig^i\oplus W^i[0,T_n]\big),\ i=1,2,$$
where $\oplus$ denotes path concatenation.  
The non-disconnection event at scale $n$ is 
\begin{equation*}
	\widetilde{D}_n(\ol\initialconfig):=\big\{ \widehat\initialconfig_n = (\widehat\initialconfig^1_n,\widehat\initialconfig^2_n)\in \widetilde{\mathcal{Y}} \big\}.
\end{equation*}
In \cite{L96}, Lawler establishes the following up-to-constants estimate for the non-disconnection probability:
\begin{equation} \label{eq:non-disconnection probability}
\Pb^{x_1,x_2}\big( \widetilde D_n(\ol\initialconfig)\big) \asymp_{\ol\initialconfig} e^{-\alpha n},
\end{equation}
where $\ol\initialconfig\in\wt\Yc$ and $\alpha=\xi(2)$ corresponds to the two-arm disconnection exponent, which is known from \eqref{eq:intersection exponents} to equal 2/3. Additionally, we have the sharp estimate for $\widetilde{D}_n(\ol\initialconfig)$:
\begin{equation}\label{eq:259}
    \Pb^{x_1,x_2}\big( \widetilde D_n(\ol\initialconfig)\big) \simeq  q e^{-\alpha n},
\end{equation}
where $q=q(\ol\initialconfig)$ is a constant depending on $\ol\initialconfig$. For details, we refer to Section 6 of \cite{LSW02b}. 

The following lemma extends these results to Brownian excursions; see e.g.\ \cite{LSW02b,LW04}.
\begin{lemma} \label{thm:total mass}
	Let $W^1$ and $W^2$ be independent Brownian motions, $s < r$, and $\sigma_s^i$ denote the last exit time of $\Dc_s$ by $W^i$ before $T_r^i$. Then,
	\begin{equation}
		\Pb(W^1[\sigma_s^1,T_t^1]\cup W^2[\sigma_s^2,T_t^2]\mathrm{\ does\ not\ disconnect\ }\partial\Dc_s\mathrm{\ from\ }\partial\Dc_t )\asymp (t-s)^2e^{-\alpha(t-s)}.
	\end{equation} 
\end{lemma}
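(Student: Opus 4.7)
My plan is to reduce to the case $s=0$, $u := t-s > 0$ by the scale and rotational invariance of planar Brownian motion, and to identify the joint law of $(W^1[\sigma_0^1, T_u^1], W^2[\sigma_0^2, T_u^2])$ as the product of two independent copies of the normalized Brownian excursion measure in the annulus $\Ac := \Ac(0,0,u)$. Writing $\mu := \mu^{\Ac}_{\partial\Dc_0,\partial\Dc_u}$ for the boundary-to-boundary excursion measure defined in Section~\ref{sec2.1:notation2} and $\hat\mu := \mu/\|\mu\|$, It\^o excursion theory for $W^i$ at the circle $\partial\Dc_0$ shows that the unique excursion of $W^i$ away from $\partial\Dc_0$ reaching $\partial\Dc_u$, namely $W^i[\sigma_0^i, T_u^i]$, is distributed as $\hat\mu$. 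Denoting by $F$ the non-disconnection event in the statement, it follows that
\[
\Pb(F) \;=\; (\hat\mu\otimes\hat\mu)(F) \;=\; \frac{(\mu\otimes\mu)(F)}{\|\mu\|^2},
\]
so the proof splits into estimates for the numerator and the denominator.

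The denominator is classical. Since $\log|z|/u$ is the harmonic function in $\Ac$ equal to $0$ on $\partial\Dc_0$ and $1$ on $\partial\Dc_u$, a direct computation of the excursion Poisson kernel in $\Ac$ gives $\|\mu\| \asymp 1/u$, producing the polynomial prefactor $1/\|\mu\|^2 \asymp u^2$.

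For the numerator I aim to show $(\mu\otimes\mu)(F)\asymp e^{-\alpha u}$ by splitting each excursion at its first hit of $\partial\Dc_1$ and its last hit of $\partial\Dc_{u-1}$. This decomposes $\Ac$ into two thin caps $\Ac(0,0,1)$, $\Ac(0,u-1,u)$ of bounded log-width and a bulk annulus $\Ac(0,1,u-1)$. The mass contributed by the two caps (excursion pieces traversing the cap) is bounded above and below by positive constants uniformly in $u$, since the excursion measures in annuli of bounded log-width are of order one. The middle contribution, conditional on an entry configuration $\ol\initialconfig$ on $\partial\Dc_1$ and an analogous exit configuration on $\partial\Dc_{u-1}$, is exactly a two-arm non-disconnection probability over log-scale $u-2$, which by~\eqref{eq:non-disconnection probability} is $\asymp_{\ol\initialconfig} e^{-\alpha(u-2)} \asymp e^{-\alpha u}$.

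The main obstacle is to remove the dependence on $\ol\initialconfig$, since the constant in~\eqref{eq:non-disconnection probability} is configuration-dependent. I would handle this via the separation lemma reviewed in Section~\ref{sec2.2:prelim}: conditionally on non-disconnection within the inner cap, with uniformly positive probability the two arms are macroscopically separated at $\partial\Dc_1$, which collapses the configuration-dependent constant into a uniform one, and a time-reversed argument handles the outer cap. Combining the mass and non-disconnection estimates then yields $\Pb(F)\asymp u^2 e^{-\alpha u}$, which is the claimed bound.
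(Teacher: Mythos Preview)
The paper does not give its own proof of this lemma; it simply cites \cite{LSW02b,LW04}. Your approach---identify $W^i[\sigma_0^i,T_u^i]$ with a sample from the normalized boundary-to-boundary excursion measure $\hat\mu$ in the annulus via last-exit decomposition, compute $\|\mu\|\asymp 1/u$ to extract the polynomial prefactor, and then show $(\mu\otimes\mu)(F)\asymp e^{-\alpha u}$---is exactly the natural one, and is in the spirit of what those references do. Two points deserve comment.

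First, a genuine subtlety you gloss over. After splitting each excursion at $\partial\Dc_1$, the continuation from $y\in\partial\Dc_1$ is \emph{not} a standard Brownian motion: under the un-normalized measure $\mu$ it is the sub-probability $\Pb^y(\,\cdot\,; T_u<\tau_0)$, i.e.\ Brownian motion restricted to avoid $\overline{\Dc_0}$. For the upper bound on $(\mu\otimes\mu)(F)$ this is harmless---drop the restriction and use monotonicity of non-disconnection together with the uniform two-arm upper bound. For the lower bound, however, the separation lemma only gives you well-separated endpoints on $\partial\Dc_1$; you still need that, conditionally on two-arm non-disconnection from such a good configuration, the event $\{T_u^i<\tau_0^i\}$ costs at most a constant. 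This is true but not automatic: a non-disconnecting pair can still dip into $\Dc_0$. One way to close the gap is to restrict to the event that each $W^i$ stays in a half-annulus (or sector) not containing $0$ for the first unit scale, then invoke the two-arm estimate with this enlarged initial configuration; alternatively, exploit the conformal invariance of the excursion measure and run a separate sub/super-multiplicativity argument directly for excursions, which is closer to how \cite{LSW02b} proceeds. Either way, your sentence ``the middle contribution\ldots is exactly a two-arm non-disconnection probability'' is not literally correct and needs this extra step.

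Second, a notational slip: in the paper's conventions $\Dc_r=D_{e^r}$, so the relevant annulus is $\Ac(0,1,e^u)=\{1\le|z|\le e^u\}$, not $\Ac(0,0,u)$. Also, the identification of $W^i[\sigma_0^i,T_u^i]$ with $\hat\mu$ is really a last-exit decomposition (together with rotational symmetry of the starting point), not It\^o excursion theory at a point; your conclusion is correct, but the justification should be phrased that way.
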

\textbf{(2) Discrete framework.} 
Let $\Gamma_m$ denote the set of paths $\initialconfig$ where $\Pb\{ S[0, \tau_m]=\initialconfig \}>0$ for the simple random walk $S$ starting from $0$. Define the collection of pairs of discrete non-disconnecting paths as
\begin{equation}\label{eq:Yl}
	\mathcal{Y}_{m}:=\, \big\{\ol{\initialconfig}=(\initialconfig^{1},\initialconfig^{2})\in\Gamma_m\times\Gamma_m: 0\in \fr(\initialconfig^1\cup\initialconfig^2)\big\}.
\end{equation}
For $m>l>0$ and an initial configuration $\ol{\initialconfig}\in \mathcal{Y}_l$, the discrete non-disconnection event $D_{m}(\ol{\initialconfig})$ is defined by
\begin{equation*}\label{eq:D}
	D_{m}(\ol{\initialconfig}):= \big\{0\in \fr(\initialconfig^1\cup S^{1}[0,\tau_{m}]\cup \initialconfig^2 \cup S^{2}[0,\tau_{m}])\big\},
\end{equation*}
where $S^1$ and $S^2$ are independent SRWs starting from $x_1$ and $x_2$, the endpoints of $\initialconfig^1$ and $\initialconfig^2$. It has been shown in \cite{LP00} that 
\begin{equation} \label{eq:rw-disc-exp}
	\Pb^{x_1,x_2} \big(D_{m}(\ol \initialconfig)\big)  \asymp_{\ol\initialconfig} e^{-\alpha m}.
\end{equation}
Similar to \eqref{eq:259}, we also present the following sharp estimate. Let $D_m$ denote the event that $0\in\fr(S_1[0,\tau_m]\cup S_2[0,\tau_m])$, where $S_1$ and $S_2$ are independent SRWs starting from the origin. Then, we have the sharp estimate
\begin{equation}\label{sharp estimate for D_m}
    \Pb^{0,0} (D_{m})  \simeq q e^{-\alpha m},
\end{equation}
where $q$ is a constant. Although this sharp estimate is not stated explicitly in the literature, its proof is now more or less standard (in principle, given a sharp estimate \eqref{eq:259} for the non-disconnection probability associated with Brownian motion, we then use the Skorokhod embedding to translate the result to SRW \cite{S12}), and thus omitted for brevity.

\paragraph{Separation lemmas.} 
The key ingredient to establish sharp estimates \eqref{eq:259} and \eqref{sharp estimate for D_m} is the separation lemma. We follow the methodology of \cite{L96}. 
For $\ol\initialconfig\in\wt\Yc$, define the non-disconnection quality $\widetilde{\Delta}_n(\ol{\initialconfig})$ as the maximal radius $\delta >0$ for which attaching disks of radius $\delta$ to the endpoints of scaled Brownian paths preserves connectivity between $0$ and $\infty$:
\begin{equation} \label{eq:disc-qua}
	\widetilde{\Delta}_n(\ol{\initialconfig})=\sup\big\{\delta>0:0\in\fr\big(\widehat{\initialconfig}_n^1\cup\widehat{\initialconfig}_n^2\cup D(x_n^1,\delta)\cup D(x_n^2,\delta)\big)\big\},
\end{equation}
where $x_n^i$ denotes the endpoint of the scaled path $\widehat{\initialconfig}_n^i$. The following separation lemma is established in Lemma 3.2 of \cite{L96}. 
\begin{lemma}[Separation lemma, continuous case]\label{lem:separation lemma}
	There exists a universal constant $c>0$, such that for any initial configuration $\ol{\initialconfig} \in \widetilde{\mathcal{Y}}$,
	\begin{equation}
		\Pb^{x_1,x_2}\big(\widetilde{\Delta}_n(\ol{\initialconfig})\ge {1}/{4}\big)\ge c\,\Pb^{x_1,x_2}\big(\widetilde{D}_n(\ol{\initialconfig})\big).
	\end{equation}
\end{lemma}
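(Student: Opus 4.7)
The plan is to implement the multi-scale forcing argument due to Lawler: track the two Brownian motions across intermediate scales and, at each scale, exploit the freedom of planar Brownian motion inside an annulus to push the two endpoints into a well-separated configuration while preserving non-disconnection. To set this up, for $k=1,\ldots,n$ let $T_k^i$ be the first time $W^i$ hits $\partial\mathcal{D}_k$, and define the \emph{good-configuration} event $G_k$ by requiring that: (a) $\initialconfig^1\cup\initialconfig^2\cup W^1[0,T_k^1]\cup W^2[0,T_k^2]$ does not disconnect $0$ from $\partial\mathcal{D}_k$; (b) the two endpoints $W^i(T_k^i)$ on $\partial\mathcal{D}_k$ are angularly well-separated and stay at distance $\gtrsim e^k$ from the rest of the trace; (c) attaching disks of radius $e^k/4$ at these endpoints still allows $0$ to lie on the frontier. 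By construction (and after rescaling by $e^{-n}$), $G_n\subseteq\{\widetilde{\Delta}_n(\ol{\initialconfig})\ge 1/4\}$, so it suffices to lower-bound $\Pb^{x_1,x_2}(G_n)$.

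The heart of the proof is a one-scale improvement: there exist a fixed integer $j$ and a universal $c>0$ such that for every $k\leq n-j$ and every admissible configuration at scale $k$ satisfying non-disconnection, the conditional probability of $G_{k+j}$ is at least $c$ times the conditional probability that non-disconnection merely continues to scale $k+j$. I would prove this step using the strong Markov property at the times $T_k^i$ together with an explicit forcing construction inside the annulus $\annulus(0,e^k,e^{k+j})$: prescribe two disjoint angular tubes, one for each $W^i$, and require that $W^i$ traverses its tube, lands in a specified sub-arc of $\partial\mathcal{D}_{k+j}$, and leaves a macroscopic neighborhood of the final endpoint free of its own trace. Any such prescribed scenario has probability bounded below by a universal constant by standard annulus Brownian estimates (positive probability to realize a prescribed homotopy class uniformly in the starting point, together with boundary hitting distributions comparable to uniform on macroscopic arcs).

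With the one-scale improvement in hand, iteration is routine: applying it at scales $j,2j,\ldots$, invoking the strong Markov property, and noting that being in $G_{kj}$ already provides a good restart for the next stage, one obtains $\Pb^{x_1,x_2}(G_n)\geq c\,\Pb^{x_1,x_2}(\widetilde{D}_n(\ol{\initialconfig}))$ uniformly in $n$ and $\ol{\initialconfig}$. The main obstacle is the forcing step: the configuration at scale $k$ can be arbitrarily pinched, with the two endpoints close to each other and close to a complicated piece of the already-drawn frontier, so the target arcs and admissible homotopy classes have to be chosen carefully so as to (a) adapt to the initial configuration, guaranteeing that the desired excursions are compatible with non-disconnection, and (b) still admit a universal Brownian lower bound. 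This step is essentially the content of Lemma 3.2 of \cite{L96}, whose argument I would follow.
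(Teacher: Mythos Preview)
Your proposal is correct and follows the same approach as the paper: the paper does not give its own proof of this lemma but simply cites Lemma~3.2 of \cite{L96}, which is precisely the multi-scale forcing argument you have sketched. Your outline of the one-scale improvement plus iteration via the strong Markov property is an accurate summary of Lawler's proof.
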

\begin{rmk}\label{rmk:315}
    According to the inversion invariance of Brownian motion, one can similarly derive the inward separation lemma. For brevity, we will not explicitly define the non-disconnection quality here, and will refer to it in subsequent proofs as the ``reverse separation lemma''.
\end{rmk}
A pair of Brownian paths will be called \textit{well-separated} when they satisfy the condition $\widetilde{\Delta}_n(\ol{\initialconfig})\ge {1}/{4}$. Analogous to the continuous case, define the rescaled non-disconnection quality for SRWs as
\begin{equation} \label{eq:disc-qua-1}
	\Delta_m(\ol{\initialconfig}):= e^{-m}\,\sup_{\delta>0} \big\{ 0\in \fr\big( \initialconfig^1\cup S^{1}[0,\tau_{m}]\cup \initialconfig^2 \cup S^{2}[0,\tau_{m}] \cup {B}(x^1_{m}, \delta e^m)\cup {B}(x^2_{m}, \delta e^m) \big) \big\},
\end{equation}
where $x_m^i$ is the terminal point of $\initialconfig^i\cup S^i[0,\tau_{m}]$. The discrete version of the separation lemma is as follows.
\begin{lemma}[Separation lemma, discrete case]\label{lem:rw-disc-sep}
	There exists a universal constant $c>0$ such that for any $0<l<m$ and $\ol\initialconfig\in \mathcal{Y}_l$,
	\begin{equation}
	    \Pb^{x_1,x_2} \big(\Delta_m(\ol{\initialconfig}) \ge 1/4\big)  \ge c \, \Pb^{x_1,x_2}\big(D_{m}(\ol {\initialconfig})\big) .
	\end{equation}
\end{lemma}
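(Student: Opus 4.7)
The plan is to adapt Lawler's inductive proof of the continuous separation lemma (Lemma \ref{lem:separation lemma}) to the discrete setting, following \cite{LP00}. Fix a universal block size $k_0 \in \Nb$ to be chosen and set $m' := m - k_0$. By the strong Markov property at $\tau_{m'}$ and the quasi-multiplicativity of the non-disconnection probability that follows from \eqref{eq:rw-disc-exp}, the proof reduces to a one-shot improvement estimate: there exists a universal $p_0 > 0$ such that for every intermediate configuration $\ol{\initialconfig}' \in \Yc_{m'}$ with endpoints $\ol y = (y_1, y_2)$ on $\partial \Bc_{m'}$,
\begin{equation*}
\Pb^{y_1, y_2}\big( \{\Delta_m(\ol\initialconfig') \ge 1/4\} \cap D_{m}(\ol\initialconfig') \big) \ge p_0 \cdot \Pb^{y_1, y_2}\big( D_{m}(\ol\initialconfig') \big).
\end{equation*}

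To establish this one-shot estimate, I would run the walks across the annulus $\annulus(0, e^{m'}, e^m)$ in two substeps. First, using a \emph{reverse} (inward) separation lemma for SRW, obtained by running the forcing argument in time-reversed fashion in the spirit of Remark \ref{rmk:315} for Brownian motion, I would show that with probability at least a constant fraction of the block non-disconnection probability, the two walks arrive at a well-separated configuration at some intermediate scale $m' + k_0/2$. From such a well-separated intermediate position, I then force each $S^i$ to exit $\Bc_m$ inside a pre-assigned narrow angular cone diametrically opposite to the cone assigned to the other walk, by confining $S^i$ to a conical sub-annulus. Standard discrete Beurling and gambler's ruin estimates for SRW in conical regions give a universal lower bound on the probability of this confinement, automatically preserve the non-disconnection event, and ensure $\Delta_m \ge 1/4$ at the final endpoints by construction of the cones.

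The main obstacle is the uniformity of $p_0$ over all endpoint pairs $\ol y$, especially in pathological configurations where $y_1$ and $y_2$ are very close together or where the frontier of $\initialconfig^1 \cup \initialconfig^2$ forms narrow fjords approaching them. The reverse-separation substep is precisely what handles these degenerate cases, regularizing the configuration at the cost of a universally bounded multiplicative factor in the probability before the forward forcing is applied; this is the part of the argument that most closely mirrors Lawler's original trick in the continuous setting. A secondary technical point is that the conical confinement bounds must be quantitative and uniform in the outer radius $e^m$, which in the discrete setting is routine once $e^{m'}$ is large enough that the invariance principle applies on bounded sub-regions; the finitely many small-radius scales (where $m' - l$ is bounded) contribute only a constant factor that can be absorbed into the universal constant $c$.
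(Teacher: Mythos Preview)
The paper does not prove this lemma; it is stated without proof as a standard result, implicitly relying on the techniques of \cite{L96} and \cite{LP00}.

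Your proposal has a circularity that I believe is fatal as written. You reduce to a ``one-shot'' estimate over the last block of $k_0$ scales, but that estimate \emph{is} the separation lemma with $l$ replaced by $m'=m-k_0$: it must hold uniformly over all $\ol\initialconfig'\in\Yc_{m'}$, including configurations whose endpoints $y_1,y_2$ are at lattice distance $O(1)$ or whose opening is an arbitrarily thin fjord. Fixing $k_0$ does not bound this badness from below, so nothing has been gained by the reduction. You then propose to prove the one-shot estimate by invoking a ``reverse (inward) separation lemma'' at the intermediate scale --- but in this paper (Remarks~\ref{rmk:315} and~\ref{rmk:329}) reverse separation is derived \emph{from} forward separation (inversion invariance for Brownian motion, then coupling for SRW), so appealing to it here is circular. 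Moreover, the walks in your block run \emph{outward} from $\partial\Bc_{m'}$ to $\partial\Bc_m$; an inward separation statement is not the relevant object, and what you actually need at scale $m'+k_0/2$ is forward separation again. A smaller point: uniform quasi-multiplicativity does not follow from \eqref{eq:rw-disc-exp}, whose constants depend on $\ol\initialconfig$; it is a \emph{consequence} of the separation lemma. (Your reduction in fact needs only the strong Markov property, not quasi-multiplicativity, so this is a red herring rather than a second gap.)

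The standard argument you cite in your opening sentence (Lawler's, adapted to SRW in \cite{LP00}) is structurally different from what you then describe: it is a scale-by-scale induction on the quality $\Delta_j$, showing that from any configuration of quality $\delta$ one either disconnects within the next unit scale or, with probability bounded below uniformly in $\delta$, improves the quality geometrically. Iterating over scales yields the lemma without assuming any separation-type input. Your first sentence points to exactly this method; the body of the proposal should follow it rather than the one-shot scheme.
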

\begin{rmk}\label{rmk:329}
    Combining the coupling results of Brownian motion and simple random walk presented below, we can derive the reverse separation lemma for simple random walks from Remark \ref{rmk:315}.
\end{rmk}
With slight abuse of notation, we will similarly describe a pair of random walk paths as \textit{well-separated} when they satisfy the condition $\Delta_m(\ol{\initialconfig}) \ge 1/4$.
\paragraph{Coupling Techniques.} 
\textbf{(1) Skorokhod embedding.} 
To bridge discrete and continuous models, we employ a Skorokhod embedding scheme following \cite{L96b}. Let $X_1,\ldots,X_d$ be independent 1D Brownian motions. Define stopping times $\{\xi_n^j\}_{n\ge0}$ recursively by
\[
\xi_n^j = \inf\big\{t>\xi_{n-1}^j:|X^j(t)-X^j(\xi_{n-1}^j)|=1\big\}\ \ \mathrm{with}\ \ \xi_0^j=0.
\]
Let $Z_n = (Z_n^1,\ldots,Z_n^d)$ be a $d$-dimensional process independent of $\{X^j\}$, where $Z_0=0$ and $\Pb(Z_n-Z_{n-1} = e_j) = 1/d$ for unit vectors $e_j$, whose $j$-th exponent is equal to 1. This constructs coupled processes: 
\begin{itemize}
	\item $W(t) = \big(X^1(t),\ldots,X^d(t)\big)$ is a $d$-dimensional Brownian motion.
	\item $S(t) = \big(X^1(\xi^1(Z^1_{\left \lfloor  td\right \rfloor })),\ldots,X^d(\xi^d(Z^d_{\left \lfloor  td\right \rfloor }))\big)$ is a $d$-dimensional simple random walk.
\end{itemize}
The coupling ensures tight alignment between $W$ and $S$, quantified by the following lemma.
\begin{lemma}[Skorokhod embedding] 
    For any $\varepsilon>0$, there exists $u>0$ such that
	\begin{equation} \label{eq:se01}
		\Pb\big(\max_{0 \leq t \leq \tau_{n+1}\wedge T_{n+1}}|W(t)-S(t)|\geq e^{(1/2+\varepsilon )n}\big) = O(e^{-e^{un}}).
	\end{equation}
\end{lemma}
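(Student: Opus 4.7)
The plan is to decompose the discrepancy $|W(t)-S(t)|$ into a \emph{clock mismatch} in each coordinate plus a \emph{Brownian increment} over that mismatch, and to control each by Chernoff-type large-deviation estimates that yield doubly-exponential tails in $n$. By construction, at time $t$ one has $S^j(t)=X^j(\theta^j_t)$ with $\theta^j_t:=\xi^j_{Z^j_{\lfloor td\rfloor}}$, while $W^j(t)=X^j(t)$, so $|W(t)-S(t)|\le\sum_{j=1}^{d} |X^j(t)-X^j(\theta^j_t)|$, and it suffices to bound each coordinate separately.

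First I would restrict to the event $\{\tau_{n+1}\wedge T_{n+1}\le T\}$ with $T:=e^{(2+\varepsilon_1)n}$ for some small $\varepsilon_1\in(0,\varepsilon)$. This holds with probability $1-O(\exp(-e^{un}))$ since both hitting times have mean of order $e^{2n}$ and exponential upper tails (e.g.\ via the Gaussian tail of Brownian motion's supremum, combined with Skorokhod for SRW). Next, the clock mismatch $|t-\theta^j_t|$ is controlled by two independent sums of iid random variables with exponential moments: $\xi^j_k-k$ (partial sums of $k$ centered $\pm 1$-hitting times for $1$D Brownian motion) and $Z^j_m-m/d$ (partial sums of $m$ centered Bernoulli$(1/d)$ variables). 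Cramér's inequality then gives $\Pb(|\xi^j_k-k|>x)\le 2\exp(-cx^2/k)$ for $x\le k$, and the analogous bound for $Z^j_m$. Choosing $x=e^{(1+\varepsilon_2)n}$ with $\varepsilon_1/2<\varepsilon_2<2\varepsilon$ and $k,m\le T$ yields $x^2/k\ge e^{(2\varepsilon_2-\varepsilon_1)n}$, so each probability is at most $\exp(-e^{un})$. A union bound over $k,m\le T$ and the $d$ coordinates only costs a polynomial factor in $e^n$, so on a further good event we have $\sup_{t\le T}\max_j |t-\theta^j_t|\le 2e^{(1+\varepsilon_2)n}=:h$.

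Finally, on this good event the Brownian increment is controlled by the modulus of continuity of $X^j$ on time intervals of length $h$ inside $[0,T+h]$. Covering the window by $O(T/h)=O(e^{(1+\varepsilon_1)n})$ overlapping intervals of length $2h$ and applying the reflection principle $\Pb(\sup_{s\le 2h}|X^j(s)|>y)\le 4\exp(-y^2/(4h))$ with $y=e^{(1/2+\varepsilon)n}$ gives $y^2/(4h)=\tfrac{1}{8}e^{(2\varepsilon-\varepsilon_2)n}$, which is again doubly exponential thanks to $\varepsilon_2<2\varepsilon$. Summing over intervals and coordinates preserves the doubly-exponential decay and produces the bound \eqref{eq:se01}.

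The main obstacle is the three-scale balancing $\varepsilon_1/2<\varepsilon_2<2\varepsilon$: each of the three failure events (large exit time, clock mismatch, Brownian modulus) must sit squarely in the large-deviation regime of its governing sum so that Chernoff returns doubly-exponential rather than merely polynomial tails. Once these scales are fixed, the three estimates are standard and decouple cleanly, which is why this lemma can be lifted almost verbatim from the argument in \cite{L96b}.
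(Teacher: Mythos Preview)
Your proposal is correct and matches the paper's approach: the paper does not give its own proof but simply states that the estimate ``is derived from moment estimates in \cite[Section 3]{L96b},'' and your sketch is exactly that argument---decompose coordinatewise, control the clock mismatch $|t-\theta^j_t|$ by Bernstein/Cram\'er bounds on the centered sums $\xi^j_k-k$ and $Z^j_m-m/d$, then control the Brownian increment over a window of size $h$ via the reflection principle, with the scale balancing $\varepsilon_1/2<\varepsilon_2<2\varepsilon$ ensuring each piece lands in the doubly-exponential regime. There is nothing to add.
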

\noindent This exponential error decay is derived from moment estimates in \cite[Section 3]{L96b}. 

\textbf{(2) KMT coupling.} 
For sharper pathwise comparisons, we use the Koml\'os-Major-Tusn\'ady (KMT) coupling, which achieves near-optimal alignment between the simple random walk and Brownian motion (see \cite[Theorem 7.1.1]{LV10}).
\begin{lemma}[KMT coupling]
There exists a coupling $\Pb$ of planar random walk $S$ and Brownian motion $W$ such that for any $\delta>0$, there exists a constant $c_\delta>0$, such that for all $n\ge 1$,
\begin{equation}
\Pb\big(\max_{0\le t\le n}|W(t)-S(t)|\ge c_\delta\log n\big)\le c_\delta n^{-\delta}.
\end{equation} 
\end{lemma}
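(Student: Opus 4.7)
The plan is to reduce the planar statement to the classical one-dimensional KMT approximation via a rotation of coordinates. The key observation is that if $S=(X,Y)$ is a simple random walk on $\Zb^2$, then the rotated process $(U,V)=(X+Y,\,X-Y)$ has independent coordinates, each distributed as a one-dimensional simple random walk on $\Zb$ with $\pm 1$ steps. Indeed, at every time step exactly one of $X,Y$ jumps by $\pm 1$, which forces $(U,V)$ to jump by $(\varepsilon_1,\varepsilon_2)\in\{\pm 1\}^2$; direct enumeration of the four planar moves shows that the four sign combinations are equally likely and independent across time.

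First, I would invoke the one-dimensional KMT coupling as stated in \cite[Theorem 7.1.1]{LV10} (or the original Koml\'os--Major--Tusn\'ady construction) separately to $U$ and $V$, producing independent one-dimensional Brownian motions $\wt W^1,\wt W^2$ coupled to $U,V$ so that for every $\delta>0$ there exists $c'_\delta$ with
\[
\Pb\Bigl(\max_{k\le n}|U(k)-\wt W^1(k)|\ge c'_\delta \log n\Bigr)\le c'_\delta n^{-\delta},
\]
and analogously for $V,\wt W^2$. I would then set $W=\tfrac{1}{2}(\wt W^1+\wt W^2,\wt W^1-\wt W^2)$. The same orthogonal-and-rescaling transform that made $(U,V)$ two independent random walks shows that $W$ is a standard planar Brownian motion, and the triangle inequality transfers the one-dimensional $O(\log n)$ control to a planar bound on $|S(k)-W(k)|$ at integer times, at the cost of absorbing an absolute constant into $c_\delta$.

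Next I would extend the bound from integer times to continuous $t\in[0,n]$. On each unit interval $[k,k+1]$, the linearly interpolated random walk has oscillation at most $1$, while the Brownian increment satisfies the standard Gaussian tail estimate
\[
\Pb\Bigl(\sup_{s\in[0,1]}|W(k+s)-W(k)|\ge \tfrac{1}{2}\log n\Bigr)\le C n^{-\delta}
\]
for any prescribed $\delta$, provided $n$ is large. A union bound over $k\le n$ absorbs all intra-step fluctuations into the $c_\delta\log n$ term, after possibly enlarging $c_\delta$. Combining these ingredients yields the claimed estimate.

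The only substantive obstacle here is the one-dimensional KMT theorem itself, whose proof requires delicate Edgeworth-type expansions for partial sums and a dyadic conditional-quantile construction; however, since the paper invokes \cite[Theorem 7.1.1]{LV10} as a black box, this step is outsourced, and the remaining work is the routine rotation-and-interpolation argument described above.
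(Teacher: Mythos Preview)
The paper does not prove this lemma at all; it merely cites \cite[Theorem 7.1.1]{LV10} as a black box. Your proposal therefore goes beyond what the paper does by supplying the standard reduction to the one-dimensional KMT theorem via the rotation $(U,V)=(X+Y,X-Y)$, and the argument is correct.

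One small point worth flagging: the process $W=\tfrac{1}{2}(\wt W^1+\wt W^2,\,\wt W^1-\wt W^2)$ is a planar Brownian motion with covariance $\tfrac{1}{2}I$ rather than the identity. This is in fact the right normalization to match the simple random walk on $\Zb^2$, whose coordinate increments have variance $1/2$ per step, and it is also the convention used in \cite{LV10}; so the statement is consistent, but calling $W$ ``standard'' without comment could mislead. The interpolation step and the Gaussian-tail union bound over unit intervals are routine and correctly handled.
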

When analyzing exit times from the unit disk, we require a localized version of the KMT coupling. We state it as follows and refer the reader to \cite[Corollary 3.2]{KMG05} for a reference.
\begin{lemma}[Strong approximation]\label{lem:KMT1}
Under the KMT coupling, there exists some constant $K>0$, such that for all $n\ge 1$,
    \begin{equation}\label{eq:se11}
    \Pb (H^c)  =O( e^{-10n} )\ \mathrm{with}\ H:=\big\{\max_{0\le t\le \tau_{n}\vee T_{n}} |W(t)-S(t)|\le Kn \big\}.
    \end{equation}
\end{lemma}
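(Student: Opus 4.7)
The plan is to bootstrap the polynomial tail bound of the preceding KMT coupling lemma to the exponential bound stated here, by applying KMT on a deterministic time horizon long enough to absorb both $\tau_n$ and $T_n$ with overwhelming probability. I would fix a large integer $M$ (for concreteness $M=12$) and apply KMT with exponent $\delta=1$ on the interval $[0,e^{Mn}]$. Since $\log(e^{Mn})=Mn$, this gives
\begin{equation*}
    \Pb\Big(\max_{0\le t\le e^{Mn}}|W(t)-S(t)|\ge c_1 Mn\Big)\le c_1\,e^{-Mn},
\end{equation*}
where $c_1$ is the constant produced by the KMT lemma, and I would set $K:=c_1 M$.

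Next I would control the exit times. For the Brownian motion, Brownian scaling yields $T_n\stackrel{d}{=}e^{2n}T_0$, where $T_0$ is the first exit time of the unit disk from the origin. Since $T_0$ has finite exponential moments, one obtains
\begin{equation*}
    \Pb(T_n\ge e^{Mn})=\Pb\big(T_0\ge e^{(M-2)n}\big)\le c\exp\!\big(-c\,e^{(M-2)n}\big),
\end{equation*}
which is $o(e^{-10n})$ as soon as $M>2$. The analogous estimate $\Pb(\tau_n\ge e^{Mn})\le c\exp(-c\,e^{(M-2)n})$ for the random walk is classical; it follows for example from $\Eb\tau_n\asymp e^{2n}$ together with an exponential tail obtained by iterating a uniform lower bound on the probability of exiting an annulus of bounded aspect ratio within time comparable to its squared radius, or alternatively by transferring the Brownian bound through KMT on a shorter horizon.

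Combining the two estimates by a union bound, on the intersection of the two good events---of probability at least $1-O(e^{-10n})$---one has simultaneously $\max_{0\le t\le e^{Mn}}|W(t)-S(t)|\le Kn$ and $\tau_n\vee T_n\le e^{Mn}$. On this intersection the event $H$ holds, which proves the lemma.

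The only real subtlety is the parameter balance: on horizon $T$ the KMT discrepancy grows like $\log T$ while the failure probability decays like $T^{-\delta}$. Taking $T=e^{Mn}$ is precisely the scaling that converts a polynomial-in-$T$ tail into an exponential-in-$n$ tail at the price of a linear-in-$n$ discrepancy $\log T = Mn$, which is exactly the $O(n)$ budget the lemma allows. No further ingredient is required beyond this parameter tuning, the KMT lemma, and standard exit-time tails.
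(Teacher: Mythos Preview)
Your argument is correct. The paper does not actually give a proof of this lemma; it simply cites \cite[Corollary 3.2]{KMG05}. Your bootstrapping---applying the polynomial-tail KMT bound on the deterministic horizon $[0,e^{Mn}]$ so that $\log T=Mn$ and $T^{-\delta}=e^{-\delta Mn}$, and then absorbing $\tau_n\vee T_n$ into that horizon via standard super-exponential tails for exit times---is precisely the standard derivation, and is essentially how the cited result is obtained. One small remark: your alternative suggestion of bounding $\Pb(\tau_n\ge e^{Mn})$ ``by transferring the Brownian bound through KMT on a shorter horizon'' is a bit delicate to make non-circular; the direct iteration argument you sketch first (uniform positive probability of exiting a ball of radius $R$ within time $cR^2$) is cleaner and entirely sufficient.
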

\section{Continuum frontier disks and frontier Green's function}\label{sec3:Continuum frontier disks and frontier Green's function}
In this section, we formally introduce the concept of frontier disks and construct the induced measure $\widetilde\nu_s(V)$. Then, we establish the relationship between this induced measure $\widetilde\nu_s(V)$ and the occupation measure $\nu(V)$. Meanwhile, we define the frontier Green's functions $G_{\mathcal{D}}^{\fr}(z)$ and $G_{\mathcal{D}}^{\fr}(z,w)$, and apply the path decomposition technique to derive an estimate for $G_{\mathcal{D}}^{\fr}(z)$.

The coupling between the random walk and Brownian motion can be interpreted as a localized path perturbation. Such perturbations critically influence the geometric stability of points situated on the outer boundary of these paths. To tackle this, we propose a geometric construct that remains stable under minor fluctuations. 
\begin{defi}[Frontier disk in continuum] \label{def:non-disconnecting disk event continuous}
For $s>0$, a disk $B=\mathcal{D}_{-s}(z)\subseteq\mathcal{D}$ is called a  frontier disk of $W[0,T_0]$ if:
\begin{itemize}
\item $0\notin B$ and $W[0,T_0] \cap B \neq \emptyset$,
\item $\theta_1 \cup \theta_2$ does not disconnect $B$ and $\infty$,
\item $\omega \subseteq \mathcal{D}_{-2s/3}(z)$,
\end{itemize}
where $\theta_1 = W[0,\sigma]$ with $\sigma$ being the first hitting time of $B$, $\theta_2=W[\sigma',T_0]$ with $\sigma'$ being the last exiting time of $B$, and $\omega=W[\sigma,\sigma']$ is the intermediate part, forming the first-entry and last-exit decomposition of $W[0,T_0]$.
\end{defi}
Let ${\widetilde{K}_s}(z)$ denote the event that $\mathcal{D}_{-s}(z)$ is a frontier disk for $W[0,T_0]$. Recalling the two-arm disconnection exponent $\alpha = \xi(2)=2/3$, we define the frontier-disk measure as
\begin{equation}\label{eq:defLsz}
	\widetilde{\nu}_s(V):=\int_{V}L_s(z)dz,\ \ \mathrm{ where}\ \ L_s(z):=c's^{-1}e^{\alpha s}1_{{\widetilde{K}_s}(z)}.
\end{equation}
Here $c'>0$ is a normalizing constant, $e^{\alpha s}$ comes from the second non-disconnection condition, and the factor $s^{-1}$ compensates for geometric constraints on the intermediate part $\omega$. Define $J_s(z)$ via the near-frontier event:
\begin{equation}
J_s(z):=e^{\alpha s} \ind{z:\dist(z,\fr(W[0,T_0]))\leq e^{-s}}.
\end{equation}

The following results are the frontier-point analogues of Theorems 1.1-1.3 in \cite{HLLS22} which concerning Brownian cut points. Among them, Theorems \ref{prop:green function and frontier disk 1} and \ref{prop:green function and frontier disk 2} provide one-point and two-point estimates for frontier Green's function. Consequently, Theorem \ref{thm:418} provides an alternative approach to the existence of Minkowski content of Brownian frontier, without using SLE techniques. The proofs follow similarly as in their cut-point counterparts with minor technical modifications and are therefore omitted.

\begin{thm}[One-point frontier Green's function]\label{prop:green function and frontier disk 1}
For $s>0$ and $z\in\Dc$ with $dist(0,z,\partial \mathcal{D})\geq e^{-2s/3}$, the one-point frontier Green’s function exists as:
\begin{equation}\label{eq:427}
G_{\mathcal{D}}^{\fr}(z):=\lim_{s\to\infty}\Eb[L_s(z)].
\end{equation}
Furthermore, we have the asymptotic equivalence:
\begin{equation}
\label{eq:Lsz and green function}
\Eb[L_s(z)] \simeq G_{\mathcal{D}}^{\fr}(z),\ \ \mathrm{and}\ \ \Eb[J_s(z)] \simeq G_{\mathcal{D}}^{\fr}(z).
	\end{equation}
\end{thm}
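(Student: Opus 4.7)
My plan is to adapt the proof of the analogous one-point cut-point Green's function estimate from \cite{HLLS22}, replacing the cut-point configuration with the frontier-disk configuration. The main inputs are the first-entry/last-exit decomposition of $W[0,T_0]$ at $B=\mathcal{D}_{-s}(z)$, the sharp non-disconnection estimate \eqref{eq:259} (respectively Lemma \ref{thm:total mass} for Brownian excursions), and the separation lemmas (Lemma \ref{lem:separation lemma} together with its inward counterpart from Remark \ref{rmk:315}).

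The first step is to express $\mathbb{P}[\widetilde{K}_s(z)]$ as an integral over path measures. Conditioning on the first entry point $x_1\in\partial B$ and the last exit point $x_2\in\partial B$, the three pieces $\theta_1,\omega,\theta_2$ become conditionally independent: $\theta_1$ is a Brownian path from $0$ to $x_1$ avoiding $B$, $\theta_2$ is a Brownian path from $x_2$ to $\partial\mathcal{D}$ avoiding $B$ after its last exit, and $\omega$ is an excursion from $x_1$ to $x_2$ constrained to lie in $\mathcal{D}_{-2s/3}(z)$. Writing the indicator of $\widetilde{K}_s(z)$ as a product over these pieces yields
\begin{equation*}
\mathbb{P}[\widetilde{K}_s(z)] = \int_{\partial B\times\partial B}\mathbb{P}_{x_1,x_2}\big[\theta_1\cup\theta_2\text{ non-disc.}\big]\,Q_s(x_1,x_2)\,d\mu(x_1,x_2),
\end{equation*}
where $Q_s(x_1,x_2)$ encodes the confined-excursion weight and $\mu$ is the joint entry/exit distribution on $\partial B$, both explicitly described via interior-to-boundary and boundary-to-boundary Brownian measures in $\mathcal{D}\setminus B$ and $\mathcal{D}_{-2s/3}(z)$, respectively.

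Next, I would extract a sharp $e^{-\alpha s}$ from the non-disconnection probability using \eqref{eq:259} applied to the rescaled configuration (where $B$ has unit radius and $\partial\mathcal{D}$ sits at distance $e^s$). The separation lemmas guarantee that, up to a multiplicative error $O(e^{-us})$, the boundary configurations at the two scales are well-separated, which is exactly the input needed to upgrade the up-to-constants estimate to a $\simeq$ asymptotic. Combining this with harmonic-measure estimates in $\mathcal{D}\setminus B$ for $\mu$ and annular-confinement estimates of log-width $\Theta(s)$ for $Q_s$, one extracts precisely the factor $(c's)^{-1}e^{-\alpha s}$ times a remainder that converges to $G^{\fr}_{\mathcal{D}}(z)$ with exponentially small error. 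Multiplying by $c' s^{-1}e^{\alpha s}$ proves existence of the limit in \eqref{eq:427} and the first part of \eqref{eq:Lsz and green function}. The asymptotic $\mathbb{E}[J_s(z)]\simeq G^{\fr}_{\mathcal{D}}(z)$ then follows by comparing the near-frontier event $\{\dist(z,\fr(W[0,T_0]))\le e^{-s}\}$ with a union of frontier-disk events $\widetilde{K}_{s'}(z')$ for $s'\ge s$ and $z'$ close to $z$: the reverse separation lemma, combined with the sharp one-point estimate for $L_{s'}$ just obtained, provides a two-sided quantitative comparison after a summation-over-scales argument.

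The main obstacle is the sharp control of the confined-excursion weight $Q_s(x_1,x_2)$: the intermediate part $\omega$ may leave $B$ and re-enter many times before the last exit, so extracting the precise $s^{-1}$ prefactor with exponentially small error (rather than only up to constants) requires a careful iterated application of the inward separation lemma together with standard annular-confinement and harmonic-measure estimates for planar Brownian motion, uniformly over the endpoints $(x_1,x_2)\in\partial B\times\partial B$.
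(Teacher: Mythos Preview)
Your approach is exactly what the paper does: it explicitly omits the proof and refers the reader to the cut-point analogue in \cite{HLLS22}, noting that only minor technical modifications are needed. So strategically you are on target---first-entry/last-exit decomposition, sharp non-disconnection asymptotics \eqref{eq:259}, separation lemmas in both directions, and a scale-by-scale comparison of $J_s$ with $L_s$.

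There is, however, one concrete slip that would cause the argument to fail as written. The confined-excursion weight $Q_s(x_1,x_2)$ (total mass of $\omega$ from $x_1$ to $x_2$ staying in $\mathcal D_{-2s/3}(z)$) is of order $s$, not $s^{-1}$: it is governed by the Green's function in a disk of radius $e^{-2s/3}$ between two points at distance $\asymp e^{-s}$, hence $\asymp \log(e^{s/3})\asymp s$. The paper states this explicitly in the proof of Lemma~\ref{lem:518} and in the comment following \eqref{eq:defLsz} that ``the factor $s^{-1}$ compensates for geometric constraints on the intermediate part $\omega$''. Consequently $\mathbb P[\widetilde K_s(z)]\asymp s\,e^{-\alpha s}a(z)$, and multiplying by $c's^{-1}e^{\alpha s}$ cancels both $s$ and $e^{-\alpha s}$ to leave a convergent quantity. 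Your claim that one ``extracts the precise $s^{-1}$ prefactor'' from $Q_s$ would instead give $\mathbb E[L_s(z)]\asymp s^{-2}$, which tends to $0$. Once you correct this sign on the power of $s$, the rest of your outline goes through and matches the (omitted) argument the paper has in mind.
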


Let $\Vc$ denote the collection of dyadic squares $V\subseteq\Dc$ of the form
\begin{equation}\label{eq:def of nice box}
	V=\Big[\frac{k_1}{2^n},\frac{k_1+1}{2^n}\Big]\times\Big[\frac{k_2}{2^n},\frac{k_2+1}{2^n}\Big],
\end{equation}
where $n,k_1,k_2\in \Zb$ satisfy $\min\{\dist(0,V), \dist(V,\partial\Dc)\}\geq 2\mathrm{diam}(V)$. We call elements of $\Vc$ \textit{``nice boxes''}. 
\begin{thm}[Two-point frontier Green's function] \label{prop:green function and frontier disk 2}
	For any $V \in \mathcal{V}$, $z,w \in V$ and $s>0$, the two-point frontier Green’s function exists as
    \begin{equation}
        G_{\mathcal{D}}^{\fr}(z,w):=\lim_{s\to\infty} \Eb[L_s(z)L_s(w)].
    \end{equation}
    Additionally, if $|z-w|\geq e^{-2s/3}$, the following asymptotic relations hold.
	\begin{equation}
		\Eb[L_s(z)L_s(w)]\simeq G_{\mathcal{D}}^{\fr}(z,w),\ \ \mathrm{and}\ \ 
	\Eb[L_s(z)J_s(w)]\simeq G_{\mathcal{D}}^{\fr}(z,w).
	\end{equation}
\end{thm}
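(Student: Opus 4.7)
The plan is to adapt the cut-point Green's function proofs in \cite{HLLS22} (their Theorems 1.2--1.3) to the frontier setting: the two-arm disconnection exponent $\alpha=\xi(2)=2/3$ replaces the cut-point exponent, the sharp non-disconnection asymptotic \eqref{eq:259} replaces the cut-point non-intersection sharp estimate, and Lemma~\ref{lem:separation lemma} together with its reverse form (Remark~\ref{rmk:315}) replaces the cut-point separation lemma. The (also omitted) proof of Theorem~\ref{prop:green function and frontier disk 1} will serve as a template; the two-point statement adds only one additional scale into the bookkeeping, and the cross term $\mathbb{E}[L_s(z)J_s(w)]$ is then reduced to $\mathbb{E}[L_s(z)L_s(w')]$ for frontier points $w'$ near $w$.

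The key step is a multi-scale path decomposition. I fix an intermediate scale $r:=|z-w|/3$ so that $D(z,r)$ and $D(w,r)$ are disjoint; the hypothesis $|z-w|\ge e^{-2s/3}$ then also forces $\mathcal{D}_{-2s/3}(z)$ and $\mathcal{D}_{-2s/3}(w)$ to be disjoint. On the event $\widetilde K_s(z)\cap\widetilde K_s(w)$, I decompose $W[0,T_0]$ at the first entries and last exits of $\mathcal{D}_{-s}(z)$, $\mathcal{D}_{-s}(w)$ and of $\partial D(z,r)\cup\partial D(w,r)$. This produces: an initial arm from $0$ to the first entry into the two small disks; a terminal arm from the last exit back to $\partial\mathcal{D}$; two localized intermediate parts $\omega_z\subseteq\mathcal{D}_{-2s/3}(z)$ and $\omega_w\subseteq\mathcal{D}_{-2s/3}(w)$; and possibly several ``bridging'' excursions between $\partial D(z,r)$ and $\partial D(w,r)$. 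The two frontier-disk conditions then translate into independent two-arm non-disconnection events at scale roughly $s+\log(1/r)$, each admitting the sharp prefactor of \eqref{eq:259}. The separation lemmas let me restrict to well-separated endpoint configurations on each relevant circle at the cost of only a constant factor, so the sharp prefactors can be extracted independently at every scale. Multiplying them and integrating out the remaining randomness produces a quantity $\mathcal{F}(z,w)$ that is independent of $s$ up to a $(1+O(e^{-us}))$ error; defining $G^{\fr}_\mathcal{D}(z,w):=\mathcal{F}(z,w)$ then simultaneously gives existence of the limit and the ``$\simeq$'' bound for $\mathbb{E}[L_s(z)L_s(w)]$.

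For the cross term I write $J_s(w)=e^{\alpha s}\mathbf{1}\{\dist(w,\fr(W[0,T_0]))\le e^{-s}\}$ and relate it to $L_s(w')$ at the frontier point $w'$ nearest to $w$: a uniform separation argument shows that the two-point analysis from the previous paragraph applies with $w'$ in place of $w$ up to an exponentially small error, and continuity of $G^{\fr}_\mathcal{D}(z,\cdot)$ on $D(w,e^{-s})$ absorbs the perturbation into a further $(1+O(e^{-us}))$ factor. The main technical obstacle, inherited from the one-point case, is controlling two atypical scenarios: (i) the Brownian motion making many crossings between the annuli $\annulus(z,e^{-s},r)$ and $\annulus(w,e^{-s},r)$, and (ii) failure of the containment conditions on $\omega_z$ or $\omega_w$. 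Both are handled by combining a Beurling-type gambler's-ruin estimate with the reverse separation lemma, so that each bad scenario contributes a factor exponentially smaller than the main term and the ``$\simeq$'' error is preserved throughout.
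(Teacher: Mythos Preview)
Your proposal is correct and matches the paper's approach exactly: the paper explicitly omits the proof of this theorem, stating that it ``follows similarly as in its cut-point counterpart'' in \cite{HLLS22} ``with minor technical modifications,'' which is precisely the adaptation you outline. The substitutions you identify (the disconnection exponent $\alpha=\xi(2)$ for the cut-point exponent, the sharp non-disconnection estimate \eqref{eq:259} and the separation/reverse-separation lemmas for their non-intersection analogues) are the correct ones.
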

\begin{thm}\label{thm:418}
	Let $V\subseteq \Dc$ be a bounded Borel set whose boundary $\partial V$ has zero $(2-\varepsilon) $-Minkowski content for some $\varepsilon >0$. Then the non-atomic measure $\nu(V)=\mathrm{Cont}_{\alpha}(\fr(W[0,T_0])\cap V)$ almost surely exists, with moments given by
	\begin{equation}\label{eq1:nu(V)}
		\Eb[\nu (V)]=\int_{V}G_{\mathcal{D}}^{\fr}(z)dz\ \ \mathrm{and}\ \ \Eb[\nu (V)^2]=\int_{V}G_{\mathcal{D}}^{\fr}(z,w)dzdw.
	\end{equation}
\end{thm}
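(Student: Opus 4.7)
The plan is to adapt the standard $L^{2}$-approximation scheme used in \cite{HLLS22} for cut points. First I would translate the Minkowski content into the integral of $J_s$: with scale $r=e^{-s}$ and target dimension $2-\alpha=4/3$,
\begin{equation*}
    \int_V J_s(z)\, dz = e^{\alpha s}\,\mathrm{Area}\bigl(\fr(W[0,T_0])^{e^{-s}} \cap V\bigr),
\end{equation*}
so the existence of $\nu(V)$ reduces to proving convergence of $\int_V J_s(z)\, dz$ as $s\to\infty$ and identification of the limit with $\mathrm{Cont}_{2-\alpha}(\fr\cap V)$.

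Second, using the two-point frontier Green's function estimate (Theorem~\ref{prop:green function and frontier disk 2}), I would show that $\widetilde\nu_s(V)=\int_V L_s(z)\,dz$ is Cauchy in $L^2$ via
\begin{equation*}
    \Eb\bigl[\widetilde\nu_s(V)\widetilde\nu_{s'}(V)\bigr] = \iint_{V\times V} \Eb[L_s(z)L_{s'}(w)]\, dz\, dw \xrightarrow{s,s'\to\infty} \iint_{V\times V} G_\Dc^{\fr}(z,w)\, dz\, dw,
\end{equation*}
where the integrand converges uniformly away from the diagonal, and the near-diagonal contribution $\{|z-w|\le e^{-2s/3}\}$ is absorbed by an a priori upper bound on $\Eb[L_s(z)L_{s'}(w)]$ obtained from the separation lemma by conditioning at the scale $|z-w|$. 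Combined with the diagonal analogue $\Eb[\widetilde\nu_s(V)^{2}]\to\iint G^{\fr}_\Dc(z,w)\,dz\,dw$, this gives the Cauchy property. A parallel calculation using the mixed asymptotic $\Eb[L_s(z)J_s(w)]\simeq G_\Dc^{\fr}(z,w)$ yields $\Eb\bigl[\bigl(\widetilde\nu_s(V)-\int_V J_s(z)\,dz\bigr)^{2}\bigr]\to 0$, so $\int_V J_s(z)\,dz$ converges in $L^2$. A Borel--Cantelli argument along the subsequence $s_k=k$, combined with a sandwich estimate that uses monotonicity of $r\mapsto\mathrm{Area}(\fr^r\cap V)$, upgrades this to almost sure convergence of the full family. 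The moment identities then follow from Fubini together with the one-point and two-point asymptotics of Theorems~\ref{prop:green function and frontier disk 1}--\ref{prop:green function and frontier disk 2} and the $L^2$-uniform integrability just established.

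The step I expect to cause the most trouble is the boundary analysis: $\int_V J_s(z)\,dz=e^{\alpha s}\mathrm{Area}(\fr^{e^{-s}}\cap V)$ is not literally the Minkowski-content approximation $e^{\alpha s}\mathrm{Area}((\fr\cap V)^{e^{-s}})$, and the two differ by contributions from frontier points lying within $e^{-s}$ of $\partial V$ but outside $V$ (or vice versa). The symmetric difference is contained in $\fr^{e^{-s}}\cap(\partial V)^{e^{-s}}$; in expectation this is dominated by $e^{-\alpha s}\int_{(\partial V)^{e^{-s}}}G_\Dc^{\fr}(z)\,dz$, which by local integrability of $G_\Dc^{\fr}$ and the $(2-\eps)$-Minkowski-content hypothesis on $\partial V$ is $o(e^{-(\alpha+\eps)s})$; after multiplying by the normalizing factor $e^{\alpha s}$ this leaves $o(e^{-\eps s})\to 0$ for any $\eps>0$, which is exactly what is needed. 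A secondary and routine issue is the set where $|z|\wedge\dist(z,\partial\Dc)\le e^{-2s/3}$, on which Theorems~\ref{prop:green function and frontier disk 1}--\ref{prop:green function and frontier disk 2} do not directly apply; this is handled by the one-arm non-disconnection estimate (Lemma~\ref{one-arm disconnection exponent}), which shows that the frontier is very sparse near $\partial\Dc$ and near the origin. Non-atomicity of $\nu$ finally follows from $\Eb[\nu(V)]\to 0$ as $\diam(V)\to 0$, which is immediate from the one-point moment formula and local integrability of $G_\Dc^{\fr}$.
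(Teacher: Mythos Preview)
Your proposal is correct and follows precisely the approach the paper indicates: the paper omits the proof of Theorem~\ref{thm:418} entirely, referring to the cut-point analogue in \cite{HLLS22}, whose $L^{2}$-approximation scheme (one- and two-point Green's function asymptotics, Cauchy-in-$L^{2}$ argument for $\widetilde\nu_s$, the mixed $L$--$J$ comparison, Borel--Cantelli upgrade, and boundary control via the Minkowski hypothesis on $\partial V$) is exactly what you outline. One small caution on the almost-sure step: the sandwich along the integer subsequence $s_k=k$ only traps $e^{\alpha s}\mathrm{Area}(\fr^{e^{-s}}\cap V)$ between $e^{-\alpha}$ and $e^{\alpha}$ times the subsequential limits, so you should run Borel--Cantelli along $s_k=k/m$ for each fixed $m\in\Nb$ (the exponential $L^{2}$ rate keeps this summable) and then intersect over $m$ to squeeze the factor to $1$.
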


With Theorems~\ref{prop:green function and frontier disk 1}-\ref{thm:418} established, we can obtain the $L^2$-distance estimate between the frontier-disk measure $\widetilde\nu_s(V)$ and the continuum occupation measure $\nu(V)$. The proof follows a similar approach to that of Theorem 6.14 in \cite{GLPS23} and we decide to omit it here.
\begin{thm}\label{thm:tilde_nu_s and nu}
	There exists $u>0$ such that if $V\subseteq \Vc$ and $\dist(0,V,\partial\Dc)\geq e^{-2s/3}$, then
	\begin{equation}
		\Eb[\widetilde\nu_s(V)-\nu(V)]^2  = O_V(e^{-us}).
	\end{equation}
\end{thm}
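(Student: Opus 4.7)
The strategy is a standard $L^2$ expansion. Writing
\begin{equation*}
\Eb[(\widetilde\nu_s(V) - \nu(V))^2] = \Eb[\widetilde\nu_s(V)^2] - 2\,\Eb[\widetilde\nu_s(V)\,\nu(V)] + \Eb[\nu(V)^2],
\end{equation*}
and setting $I(V) := \int_{V\times V} G^{\fr}_{\mathcal D}(z,w)\,dz\,dw$, the plan is to show that each of the three terms agrees with $I(V)$ up to an $O_V(e^{-us})$ error, after which the theorem follows by cancellation. The third term is exactly $I(V)$ by Theorem \ref{thm:418}, so the work lies in the first and cross terms.

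For the first term, Fubini gives $\Eb[\widetilde\nu_s(V)^2] = \int_{V\times V}\Eb[L_s(z)L_s(w)]\,dz\,dw$, which I would split over the \emph{separated regime} $\{|z-w|\ge e^{-2s/3}\}$ and the \emph{diagonal regime} $\{|z-w|<e^{-2s/3}\}$. On the separated regime, Theorem \ref{prop:green function and frontier disk 2} yields $\Eb[L_s(z)L_s(w)] = G^{\fr}_{\mathcal D}(z,w)\,(1+O(e^{-us}))$, contributing $I(V)(1+O(e^{-us}))$ modulo a diagonal cap of area $O(e^{-4s/3})$. For the cross term, the idea is to approximate $\nu(V)$ by the pre-limit sausage average $\int_V J_{s'}(w)\,dw$: the a.s.\ existence of $\mathrm{Cont}_{4/3}(\fr\cap V)$ from Theorem \ref{thm:418}, together with uniform second-moment bounds supplied by Theorems \ref{prop:green function and frontier disk 1}--\ref{prop:green function and frontier disk 2}, justifies the exchange
\begin{equation*}
\Eb[\widetilde\nu_s(V)\,\nu(V)] = \lim_{s'\to\infty}\int_{V\times V}\Eb[L_s(z)J_{s'}(w)]\,dz\,dw,
\end{equation*}
after which the mixed asymptotic $\Eb[L_s(z)J_{s'}(w)]\simeq G^{\fr}_{\mathcal D}(z,w)$ from Theorem \ref{prop:green function and frontier disk 2} collapses the separated part of this double integral to $I(V)(1+O(e^{-us}))$.

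The principal obstacle is the diagonal regime $\{|z-w|<e^{-2s/3}\}$, where the sharp two-point asymptotics are not available. The plan is a dyadic decomposition in $r := -\log|z-w|$: when $z,w$ lie at distance $e^{-r}$ with $r\in[2s/3,s]$, the joint event that both $\mathcal D_{-s}(z)$ and $\mathcal D_{-s}(w)$ are frontier disks forces a single-seed non-disconnection up to radius $e^{-r}$ followed by two conditionally quasi-independent non-disconnections from scale $r$ to scale $s$ at the two disk centers. The outward and reverse separation lemmas (Lemma \ref{lem:separation lemma} and Remark \ref{rmk:315}) combined with the two-arm estimate \eqref{eq:non-disconnection probability} then give a probability bound of order $s^{O(1)}e^{-\alpha r - 2\alpha(s-r)}$. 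Multiplying by the $s^{-2}e^{2\alpha s}$ prefactor in $L_s(z)L_s(w)$ and integrating over an annular shell of area $\asymp e^{-2r}$, followed by dyadic summation over $r$ and a crude bound on the innermost ball $\{|z-w|\le e^{-s}\}$, delivers a total diagonal contribution of size $O_V(e^{-us})$ for some $u>0$. The same argument applies to $\Eb[L_s(z)J_{s'}(w)]$ on the diagonal (uniformly in $s'$), so the three pieces combine to give the claim. This is precisely the combinatorial control carried out in Theorem 6.14 of \cite{GLPS23}, which explains why the authors chose to omit the detailed calculation.
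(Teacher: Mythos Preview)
Your proposal is correct and essentially matches the paper's approach: the paper omits the proof entirely, citing Theorem~6.14 of \cite{GLPS23}, and you have correctly reconstructed the $L^2$ expansion, the separated/diagonal split via Theorems~\ref{prop:green function and frontier disk 2} and~\ref{thm:418}, and the dyadic diagonal control that underlies that reference. One small remark: Theorem~\ref{prop:green function and frontier disk 2} as stated gives $\Eb[L_s(z)J_s(w)]\simeq G^{\fr}_{\mathcal D}(z,w)$ with the \emph{same} parameter $s$, so in the cross term you would either invoke it directly at $s'=s$ and run a Cauchy-sequence argument in $s$, or note that the proof of Theorem~\ref{prop:green function and frontier disk 2} actually yields the uniform-in-$s'$ version you use.
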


The following up-to-constants estimate of the frontier Green's function is established through a path decomposition technique.

\begin{lemma}\label{lem:518} For all $z\in \Dc$, let $d_z = \mathrm{dist}(0,z,\mathcal{D})$. Then we have
	\begin{equation}\label{eq:frontier green's function}
	G_{\mathcal{D}}^{\fr}(z)\asymp d_z^{1-\alpha}1_{|z|\ge1/2}+d_z^{1/4-\alpha}1_{|z|<1/2}=:a(z).
	\end{equation}
\end{lemma}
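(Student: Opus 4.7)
The plan is to apply path decomposition. By Theorem~\ref{prop:green function and frontier disk 1} it suffices to obtain the up-to-constants estimate $\Eb[L_s(z)]\asymp a(z)$ for large $s$, i.e.\ to estimate $\Pb(\widetilde K_s(z))$ up to the prefactor $c's^{-1}e^{\alpha s}$. First I would write $W[0,T_0]=\theta_1\oplus\omega\oplus\theta_2$ as in Definition~\ref{def:non-disconnecting disk event continuous}, and apply the strong Markov property to express $\Pb(\widetilde K_s(z))$ as an integral over entry/exit points $x,y\in\partial B$ of three Brownian path measures --- the interior-to-boundary measure $\mu^{\mathcal{D}\setminus B}_{0,x}$ for $\theta_1$, an interior-to-interior measure for $\omega$ constrained to $\mathcal{D}_{-2s/3}(z)$, and the interior-to-boundary measure $\mu^{\mathcal{D}\setminus B}_{y,\partial\mathcal{D}}$ for $\theta_2$ --- weighted by the non-disconnection indicator for $\theta_1\cup\theta_2$.

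I would then decompose the integrand according to the natural annular scales $e^{-s}$, $e^{-2s/3}$, $d_z$ and $1$. Inside the annulus $\annulus(z,e^{-s},d_z)$ both arms $\theta_1,\theta_2$ are simultaneously present near $\partial B$, and Lemma~\ref{thm:total mass} yields a two-arm non-disconnection contribution of order $(e^{-s}/d_z)^{\alpha}$. The $\omega$-localization event is handled by the harmonic-measure estimates for excursions of $W$ in the annulus $\annulus(z,e^{-s},e^{-2s/3})$; combined with the hitting-probability factors for $\theta_1$ and $\theta_2$, these produce polynomial-in-$s$ weights that are exactly absorbed by the $s^{-1}$ normalization built into $L_s(z)$.

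The two regimes then differ only in the contribution coming from scales larger than $d_z$. For $|z|\ge 1/2$, the path $\theta_2$ reaches $\partial\mathcal{D}$ within distance $d_z$ of $z$, while $\theta_1$ reaches the origin at macroscopic distance; no further non-disconnection cost is incurred above $d_z$, and the only additional contribution is a harmonic-measure factor of order $d_z$ coming from the macroscopic excursion of $\theta_1$. This yields $\Eb[L_s(z)]\asymp d_z^{1-\alpha}$. For $|z|<1/2$, on the other hand, the path $\theta_1$ terminates at the origin at distance $d_z$ from $z$, so above scale $d_z$ only $\theta_2$ survives; applying Lemma~\ref{one-arm disconnection exponent} to the one-arm non-disconnection of $\theta_2$ between scales $d_z$ and $1$ produces an extra factor $d_z^{\xi(1)}=d_z^{1/4}$, giving $\Eb[L_s(z)]\asymp d_z^{1/4-\alpha}$. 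The matching lower bounds would be obtained by using Lemma~\ref{lem:separation lemma} and its reverse version from Remark~\ref{rmk:315} to force well-separated configurations of the two arms on $\partial B$, and then independently concatenating Brownian bridges realizing the two-arm non-disconnection inside $\annulus(z,e^{-s},d_z)$ and, in the second regime, the one-arm non-disconnection from scale $d_z$ up to scale $1$.

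The main obstacle, I expect, is the bookkeeping of the interaction between the $\omega$-localization event and the two-arm non-disconnection event near $\partial B$: both constraints act in the annulus $\annulus(z,e^{-s},e^{-2s/3})$, and disentangling their joint harmonic-measure contribution is precisely where the $s^{-1}$ factor in the definition of $L_s(z)$ is consumed.
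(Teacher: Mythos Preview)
Your proposal is correct and follows essentially the same path-decomposition approach as the paper. The only point worth flagging is that your concluding worry is unwarranted: in the decomposition the intermediate piece $\omega$ is an interior-to-interior path between two points on $\partial\mathcal{D}_{-s}(z)$ constrained to stay in $\mathcal{D}_{-2s/3}(z)$, and its total mass is simply $\asymp s$ by a Green's function estimate, completely decoupled from the outward non-disconnecting arms; there is no genuine interaction to untangle, and the factor $s$ (not merely ``polynomial-in-$s$'') is exactly what the $s^{-1}$ in $L_s(z)$ cancels.
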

\begin{proof}
    We use the path-decomposition method. Let $(\initialconfig_1,\initialconfig_2)$ be a pair of non-disconnecting paths originating from the points $x_1,x_2$ on $\partial \mathcal{D}_{-s}(z)$ and stopped upon reaching $\partial D(z,d_z/2)$. The total mass associated with these configurations is $\asymp e^{-\alpha s} {d_z}^{-\alpha}$, since we observe that
    \begin{equation} \label{eq: total mass of non-disconnection excursions}
		\mu _{\partial \mathcal{D}_s,\partial \mathcal{D}_r }^{\mathcal{D}_r\setminus \overline{\mathcal{D}_s} }\otimes\mu _{\partial \mathcal{D}_s,\partial \mathcal{D}_r }^{\mathcal{D}_r\setminus \overline{\mathcal{D}_s} }[\initialconfig_1\mathrm{\,\,and\,\,} \initialconfig_2\mathrm{\,\,does\,\,not\,\,disconnect\,\,}\partial\Dc_s\mathrm{\,\,with\,\,}\infty ]\asymp e^{-\alpha(r-s)}.
	\end{equation}
    Let $\omega$ be the intermediate path connecting $x_1$ and $x_2$. The total mass that $\omega\subseteq \Dc_{-2s/3}(z)$ is $\asymp s$. Now, we analyze the remaining paths in two distinct cases.
    
    \noindent\textbf{Case 1. $|z|\ge1/2$.}
    \begin{itemize}
        \item Let $\eta_1$ be a path from $0$ to its first hitting of $\partial D(z,d_z/2)$ such that the distance between the endpoints of $\beta_1$ and $\eta_1$ is within $O(d_z)$. The total mass is $\asymp d_z$.
		\item Let $\eta_2$ be the path linking the endpoints of $\beta_1$ and $\eta_1$, which contributes a total mass of $\asymp 1$ by scaling invariance.
		\item Let $\eta_3$ be the path that starts from the endpoint of $\beta_2$ and stops upon reaching $\partial \mathcal{D}$. This path is confined within a well-chosen tube, contributing a total mass of $\asymp 1$.
    \end{itemize}
    \textbf{Case 2. $|z|< 1/2$.}
    \begin{itemize}
    \item Let $\eta_1$ be the path from $0$ to the endpoint of $\beta_1$, remaining within a designated tube, yielding a total mass of $\asymp 1$.
    \item Let $\eta_2$ be the path from the endpoint of $\beta_2$ to its first hitting of $\partial D(z, 2d_z)$, which also remains confined in a tube, contributing a mass of $\asymp 1$.
    \item Let $\eta_3$ be the path starting from the endpoint of $\eta_2$ to its first hitting of $\partial \mathcal{D}$, ensuring it does not disconnect $\partial D(z, 2d_z)$ from $\infty$. According to Lemma~\ref{one-arm disconnection exponent}, the total mass is $\asymp d_z^{1/4}$.
    \end{itemize}
    By multiplying the various mass factors together, we deduce that $$\Pb\big(\widetilde K_s(z)\big)\asymp e^{-\alpha s}s(d_z^{1-\alpha}1_{|z|\ge1/2}+d_z^{1/4-\alpha}1_{|z|<1/2}),$$
    implying that $G_{\mathcal{D}}^{\fr}(z)\asymp d_z^{1-\alpha}1_{|z|\ge1/2}+d_z^{1/4-\alpha}1_{|z|<1/2}$ by \eqref{eq:defLsz} and \eqref{eq:427}. 
\end{proof}

\section{Moment estimates for RW frontier points and frontier disks}\label{sec4:Moment estimates of frontier points and frontier disks}
In this section, we derive various moment bounds on the random walk frontier. In
Section \ref{sec4.1:Sharp moments estimates for RW frontier points}, we obtain sharp estimates of the moments of frontier-point events. 
In Section \ref{sec4.2:Inward coupling result}, we present a crucial inward coupling result. Section \ref{sec4.3:Discrete frontier disks and frontier points} introduces discrete frontier disks and establishes the relationship between discrete frontier-disk events and frontier-point events.

\subsection{Sharp moments estimates for RW frontier points}\label{sec4.1:Sharp moments estimates for RW frontier points}
This subsection focuses on establishing Theorem \ref{thm:one-point1}, which connects the discrete first and second moments of frontier points with the frontier Green's functions $G_{\mathcal{D}}^{\fr}(z)$ and $G_{\mathcal{D}}^{\fr}(z,w)$ defined in Section \ref{sec3:Continuum frontier disks and frontier Green's function}. We begin with key definitions.

For $z \in \mathcal{D} \setminus \{0\}$, define
\begin{equation*}
	d_z:=\mathrm{dist}(0,z,\partial \mathcal{D}) \ \ \mathrm{and}\ \  z_n:=\left\lfloor e^n z\right\rfloor,
\end{equation*}
where $\left\lfloor\cdot\right\rfloor$ denotes the floor function applied to each coordinate. Define the frontier-point event as
\begin{equation}
	\mathcal{A}_n(z): = \{z_n\ \mathrm{is\ on\ the\ frontier\ of\ the\ random\ walk\ path}\ S[0,\tau_n]\}.
\end{equation}
\begin{thm}\label{thm:one-point1}
There exists a universal constant $c_1>0$ such that:
\begin{itemize}
    \item[(i)] For all $z \in \mathcal{D}$ with $d_z\ge e^{-n/6}$,
	\begin{equation}
		c_1 e^{\alpha n}\Pb\big(\mathcal{A}_n(z)\big) \simeq G_{\mathcal{D}}^{\fr}(z).
	\end{equation}
    \item[(ii)] For all $V\in\mathcal{V}$, and $z,w \in V$ with $|z-w|\ge e^{-n/6}$,
	\begin{equation}
		c_1^2 e^{2\alpha n}\Pb\big(\mathcal{A}_n(z)\cap\mathcal{A}_n(w)\big)\simeq_V G_{\mathcal{D}}^{\fr}(z,w).
	\end{equation}
\end{itemize}
\end{thm}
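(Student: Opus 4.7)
The plan is to mirror the strategy behind the continuum Theorems \ref{prop:green function and frontier disk 1} and \ref{prop:green function and frontier disk 2} by inserting an intermediate scale $s$ with $1 \ll s \ll n$ between the discrete frontier-point event $\mathcal{A}_n(z)$ and the continuum frontier-disk event $\widetilde{K}_s(z)$. Concretely, I would introduce in Section \ref{sec4.3:Discrete frontier disks and frontier points} a discrete analogue $\widehat{K}_s(z_n)$ of Definition \ref{def:non-disconnecting disk event continuous}: the event that $B(z_n, e^{n-s})$ is hit by $S[0,\tau_n]$, the first-entry and last-exit arms do not disconnect $z_n$ from $\partial \mathcal{B}_n$, and the intermediate excursion is confined to $B(z_n, e^{n-2s/3})$. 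The argument then splits into showing (A) that $\widehat{K}_s(z_n)$ is sharply asymptotic to the continuum frontier-disk probability times a universal factor, and (B) that $\mathcal{A}_n(z)$ and $\widehat{K}_s(z_n)$ agree up to a multiplicative factor $1 + O(e^{-us})$.

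For (A), combining the discrete sharp non-disconnection estimate \eqref{sharp estimate for D_m} with the discrete separation lemma (Lemma \ref{lem:rw-disc-sep}) and its reverse version (Remark \ref{rmk:329}) would produce $\Pb(\widehat K_s(z_n))$, up to a factor $1 + O(e^{-us})$, as an integral against the discrete harmonic measure on $\partial B(z_n, e^{n-s})$. Applying the inward coupling result of Section \ref{sec4.2:Inward coupling result}, I would couple the two discrete arms with a pair of Brownian excursions realizing $\widetilde{K}_s(z)$, with coupling error decaying exponentially in $n$; the discrete harmonic measure likewise converges to its Brownian counterpart at the same rate. Together with $\Eb[L_s(z)] \simeq G_{\mathcal{D}}^{\fr}(z)$ from \eqref{eq:Lsz and green function}, this yields $c_1 e^{\alpha n} \Pb(\widehat K_s(z_n)) \simeq G_{\mathcal{D}}^{\fr}(z)$, with the universal constant $c_1$ absorbing the discrete-to-continuum proportionality that already governs \eqref{sharp estimate for D_m} versus \eqref{eq:259}.

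For (B), applying the separation lemma at scales $n$ and $n-s$ shows that on $\mathcal{A}_n(z)$ the path typically admits the first-entry/last-exit decomposition required by $\widehat{K}_s(z_n)$, while conversely $\widehat{K}_s(z_n)$ forces $z_n \in \fr(S[0,\tau_n])$ except on a set controllable in $s$ (the only slack being whether the intermediate excursion happens to visit $z_n$ itself, which is a single-point event with negligible contribution). The constraint $d_z \ge e^{-n/6}$ leaves enough room to pick $s$ growing linearly in $n$ with $s < n/6 - O(1)$, so both the coupling error and the separation error decay exponentially in $n$, yielding the $\simeq$ asymptotic of part (i).

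Part (ii) runs the same program simultaneously at $z_n$ and $w_n$: since $|z-w| \ge e^{-n/6}$, one can pick $s = \Theta(n)$ small enough that $B(z_n, e^{n-s})$ and $B(w_n, e^{n-s})$ are disjoint, so the four arms near $z_n$ and $w_n$ live on disjoint pieces of the trajectory and can be coupled with two independent Brownian realizations of $\widetilde{K}_s(z)$ and $\widetilde{K}_s(w)$, after which Theorem \ref{prop:green function and frontier disk 2} supplies the sharp constant with $c_1$ appearing squared. The main obstacle is upgrading the coupling estimates from an up-to-constants $\asymp$ to the sharp $\simeq$ required here: this is precisely what the inward coupling result of Section \ref{sec4.2:Inward coupling result} is designed to provide, and once it is stated in a quantitatively sharp form, all other steps amount to routine application of the separation lemmas and the continuum results of Section \ref{sec3:Continuum frontier disks and frontier Green's function}.
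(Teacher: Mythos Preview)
Your step (B) contains a genuine gap that would make the whole argument collapse. You claim that $\mathcal{A}_n(z)$ and your discrete frontier-disk event $\widehat{K}_s(z_n)$ agree up to a factor $1+O(e^{-us})$, and that ``$\widehat{K}_s(z_n)$ forces $z_n \in \fr(S[0,\tau_n])$ except on a set controllable in $s$''. This is false: on $\widehat{K}_s(z_n)$, the intermediate excursion $\omega$ living inside $B(z_n, e^{n-2s/3})$ will \emph{typically} disconnect the single point $z_n$ from infinity. The frontier-disk condition only ensures that the \emph{boundary} of $B(z_n, e^{n-s})$ is reachable from $\infty$, not its centre. Concretely, $\Pb(\widehat{K}_s(z_n))$ is of order $s\, e^{-\alpha s} a(z)$ while $\Pb(\mathcal{A}_n(z))$ is of order $e^{-\alpha n} a(z)$, so their ratio is $\asymp s\, e^{\alpha(n-s)}$ and blows up; this is exactly the content of Proposition~\ref{thm:one-point disk and point} and \eqref{eq:f(n)} in the paper. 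The ``single-point event'' you dismiss is in fact the dominant behaviour.

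The paper's route is different and avoids this issue entirely. It never tries to identify $\mathcal{A}_n(z)$ with a frontier-disk event. Instead it decomposes the path at radius $e^{n/6}$ around $z_n$: the inner piece $\ol\zeta\in\Yc_{n/6}(z_n)$ consists of two SRW arms from $z_n$ to $\partial\Bc_{n/6}(z_n)$, and the outer piece consists of the macroscopic arms from $\partial\Bc_{n/6}(z_n)$ to $0$ and to $\partial\Bc_n$. Proposition~\ref{prop:Dnf} shows that the inner piece can be restricted to ``nice'' configurations (those that survive thickening) at exponential cost; Proposition~\ref{prop:discrete and continuous 1} then swaps the discrete outer arms for Brownian ones attached to the thickened inner configuration, and Proposition~\ref{prop:536} factors out $G^{\fr}_{\Dc}(z)$ from the resulting Brownian quantity. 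Finally Propositions~\ref{prop:Dnf} and~\ref{prop:Dnf2} turn the leftover sum back into the sharp discrete non-disconnection probability~\eqref{sharp estimate for D_m}. The constant $c_1$ is $1/(c_2 q)$, a product of the constant $c_2$ coming from Proposition~\ref{prop:536} and the constant $q$ in~\eqref{sharp estimate for D_m}; it is not merely the discrete-to-continuum ratio between~\eqref{sharp estimate for D_m} and~\eqref{eq:259}. Your remark for part~(ii) that the arms near $z_n$ and $w_n$ ``live on disjoint pieces of the trajectory and can be coupled with two independent Brownian realizations'' is also not right: the path between the two disks is shared, and the correct two-point argument (as in the proof of Theorem~\ref{thm:one-point1}(ii), omitted in the paper but following the same template) must handle that shared macroscopic piece.
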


To derive Theorem \ref{thm:one-point1}, we introduce ``nice configurations''. Given a curve $\initialconfig$ from $\partial\mathcal{B}_l$ to $\partial\mathcal{B}_m$ with $0<l<r\le m$, define its $r$-thickening as: 
\begin{equation*}
	\mathrm{THICK}_{r}(\initialconfig):=\{ x: \mathrm{dist}(x,\initialconfig[\tau_{r-1},\tau_r])\le e^{15r/16} \}\cup \initialconfig.
\end{equation*}
Moreover, we define the multi-scale thickening of $\initialconfig$ by 
\[
\mathrm{THICK}_{r_1,r_2}(\initialconfig):=\cup_{r_1\le r\le r_2}\mathrm{THICK}_{r}(\initialconfig),\  r_1<r_2.
\]
These notions of thickening can be naturally extended to multiple paths. 
We define the set of nice pairs of paths:
\[
\mathrm{NICE}_{m}:=\big\{ \ol\initialconfig\in \mathcal{Y}_m: 0\in \fr( \mathrm{THICK}_{\lfloor 29m/30\rfloor,m}(\ol\initialconfig) ) \big\}.
\]

The following proposition demonstrates that non-disconnecting SRW pairs exhibit niceness at intermediate scales with high probability. The full proof will be provided in Appendix \ref{appen:A}. We recall the definition of $\mathcal{Y}_l$ in \eqref{eq:Yl}.

\begin{prop}\label{prop:Dnf}
	There exist constants $c,u>0$ such that for any $\ol\initialconfig\in \mathcal{Y}_l$ and $n\ge m\ge 10 l$,
	\begin{equation}\label{eq:Dsh}
		 \Pb\big( D_{n}(\ol{\initialconfig})\,\cap\, \big\{ \ol\initialconfig\oplus\ol {S}[0,\tau_{m/2}]\notin \mathrm{NICE}_{m/2} \big\}  \big)  \le c e^{-u m} e^{-\alpha (n-l)}.
	\end{equation}
\end{prop}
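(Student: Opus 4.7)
The plan is to prove Proposition \ref{prop:Dnf} by a union bound over intermediate scales combined with a quasi-multiplicative decomposition of $D_n(\ol\initialconfig)$.

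First, I would argue that the not-nice event $\{\ol\initialconfig\oplus\ol S[0,\tau_{m/2}]\notin\mathrm{NICE}_{m/2}\}$ forces, for some integer scale $r\in[\lfloor 29m/60\rfloor,m/2]$, a close-approach event $T_r$: in the annulus $\annulus(0,e^{r-1},e^r)$, the two walks come within distance of order $e^{15r/16}$ of each other (or one of them self-approaches to the same order). Otherwise, each single-scale thickening leaves an open corridor of width much greater than $e^{15r/16}$ in its annulus, and concatenating these corridors across adjacent scales yields an unbounded arc in the complement of $\mathrm{THICK}_{\lfloor 29m/60\rfloor,m/2}$ having $0$ on its boundary, contradicting the assumed disconnection. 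A union bound then reduces matters to estimating $\Pb(D_n(\ol\initialconfig)\cap T_r)$ for each individual $r$.

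For each $r$, I would decompose $\ol S$ at the hitting times $\tau_{r-2}$ and $\tau_{r+2}$ and apply Lemma \ref{lem:rw-disc-sep} together with its reverse counterpart from Remark \ref{rmk:329} to insert well-separated configurations on both boundaries of the annulus $\annulus(0,e^{r-2},e^{r+2})$. Combined with the Markov property and the non-disconnection estimate \eqref{eq:rw-disc-exp}, this yields the quasi-multiplicative bound
\begin{equation*}
\Pb\big(D_n(\ol\initialconfig)\cap T_r\big)\le C\,\Pb\big(D_{r-2}(\ol\initialconfig)\big)\cdot q_r\cdot e^{-\alpha(n-r-2)},
\end{equation*}
where $q_r$ denotes the probability, starting from well-separated data on $\partial\Bc_{r-2}$, of the event $T_r$ combined with non-disconnection across $\annulus(0,e^{r-2},e^{r+2})$. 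The trivial bound $q_r\lesssim e^{-4\alpha}$ (from the analogue of Lemma \ref{thm:total mass}) merely reproduces the baseline $e^{-\alpha(n-l)}$ and is not enough.

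The heart of the proof, and the main obstacle, is to improve this trivial bound to $q_r\le c\,e^{-ur}\cdot e^{-4\alpha}$ for some $u>0$. The intuition is that under non-disconnection from well-separated data, the typical minimal gap between the two trajectories within the annulus is of order $e^r$; shrinking it to $e^{15r/16}=e^{-r/16}\cdot e^r$ is atypical and costs a polynomial factor in $e^r$. I would establish this by further subdividing $\annulus(0,e^{r-2},e^{r+2})$ into $\Theta(r)$ geometrically spaced sub-annuli and iterating a one-step ``gap-contraction'' lemma derived from the separation lemma: in each sub-annulus, conditional on well-separated endpoints and non-disconnection, the probability that the gap fails to return to a constant fraction of the sub-annular scale is bounded by some $\rho<1$, producing a gain $\rho^{\Theta(r)}=e^{-ur}$ upon iteration. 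Substituting this back into the factorization and summing over the $O(m)$ admissible scales $r\ge\lfloor 29m/60\rfloor$ then produces the desired bound $C\,e^{-u'm}\,e^{-\alpha(n-l)}$ for some (possibly smaller) $u'>0$.
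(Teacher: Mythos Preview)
Your reduction to a close-approach event $T_r$ is where the argument breaks down. Under the non-disconnection event $D_n(\ol\initialconfig)$, the two walks are \emph{not} typically at distance of order $e^r$ from each other in the annulus $\annulus(0,e^{r-1},e^r)$; on the contrary, they typically intersect (indeed $\xi(1,1)=5/4>2/3=\alpha$, so non-intersection is far more costly than non-disconnection, and the paper explicitly conditions away $\{S_1[0,\tau_{m-1}]\cap S_2[0,\tau_{m-1}]=\emptyset\}$ for this reason). Self-approaches of a single walk to within $e^{15r/16}$ are likewise generic. Hence your event $T_r$ has probability bounded below, not above, and the quantity $q_r$ receives no gain $e^{-ur}$ from this mechanism. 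Your proposed gap-contraction iteration, being a separation-lemma argument about the quality $\Delta$ at sub-annular \emph{endpoints}, does not control the minimal distance between the trajectories over the whole annulus; $\Delta$ can be large at every scale boundary while the paths still intersect in between.

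The paper's mechanism is different. The event ``$D_n$ holds but the thickening disconnects'' does not merely force a close approach; it forces the escape corridor from $0$ to $\infty$ to pass through a bottleneck of width $O(e^{15m/16})$. The paper localizes this bottleneck via a stopping time $\tau^*$ at which attaching a disk of radius $ce^{15m/16}$ would close the corridor, and then observes that the \emph{single} arm continuing from $\tau^*$ must reach $\partial D(z,ce^{31m/32})$ without disconnecting that small disk from $\infty$. This is a one-arm non-disconnection event across a scale ratio $e^{m/32}$, costing $\lesssim e^{-m/128}$ by Lemma~\ref{one-arm disconnection exponent}; the remaining pieces of the path decomposition recover the baseline $e^{-\alpha(n-l)}$. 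The five cases $E_m^1,\dots,E_{m,m'}^5$ simply track which thickened arm(s) close the corridor. If you want to salvage your outline, replace $T_r$ by the bottleneck event just described and obtain the penalty from the one-arm exponent rather than from two-arm gap statistics.
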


    For $\ol\zeta\in \mathrm{NICE}_{m}$, we will view its thickening as initial configuration and consider the corresponding non-disconnection event for Brownian motions:
\begin{equation}
    \widetilde D_n\big(\mathrm{THICK}_{\lfloor 29m/30\rfloor,m}(\ol\zeta)\big) := \big\{0\in\fr\big(W_1[0,T_n]\cup W_2[0,T_n]\cup \mathrm{THICK}_{\lfloor 29m/30\rfloor,m}(\ol\zeta)\big)\big\},
\end{equation}
where $W_1,W_2$ are two independent Brownian motions started from the endpoints of $\zeta_1,\zeta_2$ respectively.
The following proposition then shows that the non-disconnection probability for SRWs with initial configuration $\ol\zeta$ is close to that for Brownian motions with thickening of $\ol\zeta$ as initial configuration instead.
For the proof, we refer the reader to Appendix \ref{appen:B}.
    \begin{prop}\label{prop:Dnf2}
		There exist constants $0<c,u<\infty$ such that for any $m\le n\le 10m$ and any $\ol\zeta\in\mathrm{NICE}_{m/2}$,  
		\begin{equation}\label{eq:779}
			\big{|} \Pb\big(  D_{n}(\ol\zeta) \big) - 
			\Pb\big( \widetilde D_n(\mathrm{THICK}_{\lfloor 29m/60\rfloor,m/2}(\ol\zeta)) \big) \big{|} \le c e^{-u m} e^{-\alpha (n-m/2)}.
		\end{equation}
	\end{prop}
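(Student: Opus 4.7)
The plan is to construct a joint realization of the simple random walks $(S^1, S^2)$ and the Brownian motions $(W_1, W_2)$ via the KMT coupling (Lemma~\ref{lem:KMT1}), and then argue that on a good coupling event, the two non-disconnection events agree up to a near-miss set whose probability is bounded via a reverse separation argument. The thickening $\mathrm{THICK}_{\lfloor 29m/60\rfloor, m/2}(\ol\zeta)$ plays a dual role: it absorbs the coupling error between SRW and Brownian motion near the initial configuration, and it makes the non-disconnection event insensitive to local perturbations of $\ol\zeta$.

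First, I would couple each pair $(S^i, W_i)$ using the KMT scheme so that, with probability at least $1 - O(e^{-10n})$, one has $|S^i(t) - W_i(t)| \le Kn$ for all $t \le \tau_n \vee T_n$ and both $i$. Since $n \le 10m$, the coupling error $Kn = O(m)$ is much smaller than $e^{15r/16}$ for every $r \ge \lfloor 29m/60\rfloor$, so the thickening contains not only $\ol\zeta$ itself but also the $Kn$-sausage of any excursion of $S^i$ or $W_i$ that enters the annulus $\annulus(0, e^{\lfloor 29m/60\rfloor}, e^{m/2})$. In particular, on the good coupling event, the portions of the SRW paths lying in this annulus can be freely exchanged with the corresponding portions of the Brownian paths, or even erased in favor of the thickening itself, without changing the connectivity to $0$.

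Second, I would handle the outer region (scales $r > m/2$) by invoking the reverse separation lemma (Remarks~\ref{rmk:315} and~\ref{rmk:329}). A variant of the standard separation argument yields that, conditional on non-disconnection, with probability at least $1 - O(e^{-um})$ the pairs $(S^1, S^2)$ and $(W_1, W_2)$ are well-separated at every scale $r \in [m/2+1, n]$ they cross, meaning that at such scales their mutual distance is at least a fixed fraction of $e^r$. Since $Kn$ is exponentially smaller than any such scale, on the intersection of the good coupling event and the well-separated event, the Brownian and SRW unions produce the same frontier behavior at $0$: any curve from $0$ to $\infty$ avoiding one union can be perturbed by at most $Kn$ to avoid the other, and conversely. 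Hence the indicator functions of the two events coincide on this intersection.

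Finally, the discrepancy $|\Pb(D_n(\ol\zeta)) - \Pb(\wt{D}_n(\mathrm{THICK}_{\lfloor 29m/60\rfloor, m/2}(\ol\zeta)))|$ is bounded by $\Pb(H^c)$ plus the contribution of the non-well-separated event restricted to non-disconnection, giving the target $O(e^{-10n}) + O(e^{-um}e^{-\alpha(n-m/2)}) = O(e^{-um}e^{-\alpha(n-m/2)})$ since $n = O(m)$. The principal obstacle is the reverse separation estimate: a crude union bound over scales $r \in [m/2, n]$ would produce an extraneous polynomial factor in $n - m/2$, so the reverse separation lemma must be applied in a telescoping or bootstrapping manner (parallel to the argument behind the sharp asymptotics~\eqref{eq:259} and~\eqref{sharp estimate for D_m}) to obtain the extra $e^{-um}$ saving uniformly in $n$.
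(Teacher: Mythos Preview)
Your overall architecture is correct and matches the paper: KMT-couple $(S^i,W_i)$, observe that the thickening absorbs the $O(n)$ coupling error, and bound the symmetric difference of the two non-disconnection events. The paper's execution of the last step, however, is different from what you propose, and your version has a gap.

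You intend to control scales $r>m/2$ via a reverse separation lemma, asserting that conditional on $D_n(\ol\zeta)$ the walks are well-separated at every intermediate scale with probability $1-O(e^{-um})$. This runs into two issues. First, the separation quantity $\Delta_r$ of \eqref{eq:disc-qua-1} controls only the \emph{endpoints} on $\partial\Bc_r$; large endpoint separation at scale $r$ does not rule out the two paths (or one path and itself, or a path and $\ol\zeta$) coming within $Kn$ of each other somewhere inside the annulus $\annulus(0,e^{r-1},e^r)$, and such a near-touch is precisely what can flip the frontier event under the KMT perturbation. What is actually needed is that the escape channel from $0$ to $\infty$ has width $\gg Kn$ throughout, i.e.\ that the \emph{thickening} of the full SRW configuration still does not disconnect---which is the NICE property, not endpoint separation. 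Second, Lemma~\ref{lem:rw-disc-sep} delivers only $\Pb(\Delta_n\ge 1/4)\ge c\,\Pb(D_n)$, a constant-order lower bound, not a high-probability statement; there is no off-the-shelf ``well-separated at every scale with probability $1-O(e^{-um})$'' result to invoke.

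The paper's route is more direct and bypasses separation entirely. On the good coupling event $H$ one has $W_i\subset\sausage(S^i,Kn)$ with $2Kn<e^{15r/16}$ for every $r\ge m/2$, so failure of the Brownian event forces, for some scales $s,s'\in[m/2,n]$, the single-scale thickened SRW event
\[
F_{s,s'}:=\big\{0\notin\fr\big(\mathrm{THICK}_s(\zeta_1\oplus S^1[0,\tau_n])\cup\mathrm{THICK}_{s'}(\zeta_2\oplus S^2[0,\tau_n])\big)\big\}.
\]
But $D_n(\ol\zeta)\cap F_{s,s'}$ is exactly the type of event bounded in the proof of Proposition~\ref{prop:Dnf} (the sub-events $E^i_m$, $E^5_{m,m'}$ analyzed in Appendix~\ref{appen:A}), yielding $\Pb(D_n(\ol\zeta)\cap F_{s,s'})\le ce^{-um}e^{-\alpha(n-m/2)}$ uniformly in $s,s'$. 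Summing over the $O(m^2)$ pairs and absorbing the polynomial into $u$ handles $E_1=D_n(\ol\zeta)\setminus\widetilde D_n(\cdot)$; the reverse inclusion $E_2$ is symmetric. The telescoping you anticipate is thus already packaged inside Proposition~\ref{prop:Dnf}, now applied at scales up to $n$ rather than only up to $m/2$.
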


Next, we assume that $z\in \Dc$ with $d_z=\dist(0,z,\partial\Dc)\ge e^{-n/6}$. 
Recall that $z_n=\left \lfloor e^nz \right \rfloor $ so that $\Bc_{5n/6}(z_n)\subset \Bc_n$. 
Given $\overline\zeta=(\zeta_1,\zeta_2)\in  \Yc_{n/6}(z_n)$ with endpoints $(x_1,x_2)\in \partial\Bc_{n/6}(z_n)\times\partial\Bc_{n/6}(z_n)$, define the set of pairs of discrete non-disconnecting paths with initial configuration $\ol\zeta$ that start from $0$ and $\partial\Bc_n$ respectively by
\begin{equation}\label{eq:Af'}
	A_{0,\partial\Bc_n}(\overline\zeta):=\big\{ (\lambda_1,\lambda_2)\in \Gamma^{\Bc_n}_{0,x_1}\times\Gamma^{\Bc_n}_{\partial\Bc_n,x_2}: z_n\in\fr\big((\zeta_1\cup\lambda_1)\cup(\zeta_2\cup\lambda_2)\big)\big\},
\end{equation} 
where $\Gamma^D_{x,y}$ denotes the collection of random walk paths from $x$ to $y$ within the region $D$. Define its continuous analogue by
\begin{equation}\label{eq:Af}
	\wt A_{0,\partial\Dc_n}(\overline\zeta):=\big\{
	(\initialconfig_1,\initialconfig_2)\in \wt\Gamma^{\Dc_n}_{0,x_1}\times\wt\Gamma^{\Dc_n}_{\partial\Dc_n,x_2}: z_n\in\fr\big((\wt\zeta_1\cup\initialconfig_1)\cup(\wt\zeta_2\cup\initialconfig_2)\big) \big\},
\end{equation}
where $\widetilde\zeta_i:=\mathrm{THICK}_{\left\lfloor 29n/180\right\rfloor,n/6}(\zeta_i)$ and $\wt\Gamma^D_{x,y}$ denotes the collection of continuous paths from $x$ to $y$ within $D$.

The following proposition establishes a key relationship between the total mass of $\wt A_{0,\partial\Dc_n}(\overline\zeta)$ and the frontier-point Green's function $G_{\mathcal{D}}^{\fr}(z)$. 
The proof of Proposition \ref{prop:536} is provided in Appendix \ref{appen:C}.
\begin{prop}\label{prop:536}
There exists $c_2>0$ such that for all $\ol\zeta\in \mathrm{NICE}_{n/6}(z_n)$ with endpoints $(x_1,x_2)$, we have
    \begin{equation}\label{eq:552(1)}
		\mu_{0,x_1}^{\Dc_n}\otimes\mu_{\partial\Dc_n,x_2}^{\Dc_n}\big(\wt A_{0,\partial\Dc_n}(\overline\zeta)\big) \simeq c_2 e^{-\alpha n/2} G_{\mathcal{D}}^{\fr}(z) 	\Pb\big( \widetilde D_{n/2}(\mathrm{THICK}_{\lfloor 29n/180\rfloor,n/6}(\ol\zeta))\big).
	\end{equation}
\end{prop}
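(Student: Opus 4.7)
The plan is to perform a multi-scale path decomposition of the pair of Brownian paths at scale $n/2$ around $z_n$, separating a macroscopic outer configuration (to be identified with $G_{\mathcal{D}}^{\fr}(z)$) from a mesoscopic inner configuration (to be identified with $\Pb(\widetilde D_{n/2}(\mathrm{THICK}_{\lfloor 29n/180\rfloor,n/6}(\ol\zeta)))$). Concretely, under $\mu_{0,x_1}^{\Dc_n}\otimes\mu_{\partial\Dc_n,x_2}^{\Dc_n}$, I decompose each Brownian path $W_i$ at the last exit time $\sigma_i'$ of $\partial\Dc_{n/2}(z_n)$ before reaching $x_i$: the pre-$\sigma_i'$ piece is an ``outer'' excursion from the source ($0$ or $\partial\Dc_n$) to $\partial\Dc_{n/2}(z_n)$, and the post-$\sigma_i'$ piece is an ``inner'' path from $\partial\Dc_{n/2}(z_n)$ down to $x_i$, staying inside $\Dc_{n/2}(z_n)$. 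By the Markov property of Brownian path measures, the joint measure factorizes as an integral over the last-exit endpoints $(y_1,y_2)\in\partial\Dc_{n/2}(z_n)^2$ of (outer measure)$\otimes$(inner measure).

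Next, I would identify the inner measure. Since the inner paths stay in $\Dc_{n/2}(z_n)$ and are attached at $x_1,x_2$ to the thickening $\widetilde\zeta_i=\mathrm{THICK}_{\lfloor 29n/180\rfloor,n/6}(\zeta_i)$, time-reversing them turns the inner configuration into two Brownian motions starting from $x_1,x_2$ and run until $\partial\Dc_{n/2}(z_n)$. The requirement from $\widetilde A_{0,\partial\Dc_n}(\ol\zeta)$ that $z_n\in\fr(\cdots)$ breaks into a mesoscopic piece (no disconnection of $z_n$ at scales between $n/6$ and $n/2$) and a macroscopic piece (no disconnection at scales above $n/2$). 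The mesoscopic piece is exactly $\widetilde D_{n/2}(\mathrm{THICK}_{\lfloor 29n/180\rfloor,n/6}(\ol\zeta))$, yielding the factor $\Pb(\widetilde D_{n/2}(\cdots))$ up to a harmonic measure density evaluated at $(y_1,y_2)$.

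For the outer measure, after rescaling by $w\mapsto e^{-n}w$, the two pieces become a path from $0$ to $\partial\Dc_{-n/2}(z)$ and a path from $\partial\Dc$ to $\partial\Dc_{-n/2}(z)$ in $\Dc$, and the macroscopic non-disconnection piece above says precisely that the disk $\Dc_{-n/2}(z)$ behaves as a frontier disk for the composite Brownian structure in $\Dc$ (the analogue of $\widetilde K_{n/2}(z)$ from Definition \ref{def:non-disconnecting disk event continuous}, with the intermediate part $\omega$ of $W[0,T_0]$ replaced by the thickened initial configuration and the inner paths). Applying Theorem \ref{prop:green function and frontier disk 1} — which says $\Eb[L_{n/2}(z)]\simeq G_{\mathcal{D}}^{\fr}(z)$ — and unwinding the factors $c's^{-1}e^{\alpha s}$ from the definition \eqref{eq:defLsz} of $L_s$ identifies the outer contribution as $c_2 e^{-\alpha n/2}G_{\mathcal{D}}^{\fr}(z)$ for a universal constant $c_2$ absorbing $c'$ together with the harmonic-measure/Poisson-kernel Jacobians from the decomposition.

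The main obstacle is producing the sharp $\simeq$ equivalence (i.e.\ a $(1+O(e^{-un}))$ error), since the naive decomposition only gives an $\asymp$ estimate. This requires invoking the reverse separation lemma (Remark \ref{rmk:315}) at scale $n/2$ to force the endpoint configuration on $\partial\Dc_{n/2}(z_n)$ to be ``well-separated'' with probability comparable to the full non-disconnection probability; once separation holds, outer and inner parts decouple up to exponentially small error, and one can replace the conditional harmonic measure factors by their limiting (scale-invariant) counterparts. A secondary obstacle is handling the re-entries of $\partial\Dc_{n/2}(z_n)$ cleanly at the level of path measures, which is done by absorbing the excursions between the first entry and the last exit into the inner part, and by using the sharp estimate \eqref{eq:259} together with Theorem \ref{prop:green function and frontier disk 1} to check that these excursions contribute only a multiplicative factor of the form $1+O(e^{-un})$ that matches the other error terms.
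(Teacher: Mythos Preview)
Your overall plan---decompose the pair of paths at scale $n/2$ around $z_n$ into an outer piece carrying $G_{\Dc}^{\fr}(z)$ and an inner piece carrying $\Pb(\widetilde D_{n/2}(\cdots))$---matches the paper's strategy. However, there is a genuine gap in how you propose to obtain the sharp $\simeq$ relation.

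The separation lemma (Remark~\ref{rmk:315}) only tells you that well-separated configurations occur with probability $\asymp$ the non-disconnection probability; it does \emph{not} imply that the conditional law of the endpoints on $\partial\Dc_{n/2}(z_n)$ stabilizes with error $O(e^{-un})$. That exponential decoupling is exactly what is needed to turn a product of masses into a $\simeq$-identity, and it does not follow from separation alone. The paper's proof uses instead the \emph{quasi-invariant measure} machinery (measures $\mathbf Q$ and $\mathbf Q^*$ on non-disconnecting pairs, with total-variation convergence $O(e^{-ur})$ as in \eqref{eq:1229}--\eqref{eq:C.2}). The decomposition is actually three-fold, $\beta_i=\eta_i\oplus\omega_i\oplus\eta_i'$, with the outer pieces $\bar\eta_1$ converging to $\mathbf Q^*$, the inner pieces $\bar\eta_2$ converging to $\mathbf Q$, and the intermediate bridges $(\omega,\omega')$ shown to have mass converging to a universal constant $\widehat c$ \emph{because} the laws of their boundary data are $O(e^{-un})$-close in total variation to the product $\mathbf Q^*\otimes\mathbf Q$. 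Your two-part decomposition with a separation step cannot produce this constant without that input.

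A second issue: Theorem~\ref{prop:green function and frontier disk 1} concerns the frontier-disk event for a \emph{single} Brownian path $W[0,T_0]$, whereas your outer configuration is a pair of excursions from $0$ and from $\partial\Dc_n$. These are not the same object, and identifying the outer mass with $c\,e^{-\alpha n/2}G_{\Dc}^{\fr}(z)$ requires a separate argument (in the paper, an estimate analogous to (6.9) of \cite{GLPS23} together with \eqref{eq:259}), not a direct application of Theorem~\ref{prop:green function and frontier disk 1}.
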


Proposition \ref{prop:discrete and continuous 1} provides a sharp comparison between the probabilities of the discrete non-disconnection event and its continuous counterpart. The proof is in Appendix \ref{sec4.2:proof of prop3.12}.
\begin{prop} \label{prop:discrete and continuous 1}
	For all $\ol\zeta\in \mathrm{NICE}_{n/6}(z_n)$ with endpoints $(x_1,x_2)$, we have
	\begin{equation}\label{eq:560(1)}
		\nu_{0,x_1}^{\Bc_n}\otimes\nu_{\partial\Bc_n,x_2}^{\Bc_n}\big(A_{0,\partial\Bc_n}(\overline\zeta)\big) \simeq \mu_{0,x_1}^{\Dc_n}\otimes\mu_{\partial\Dc_n,x_2}^{\Dc_n}\big(\wt A_{0,\partial\Dc_n}(\overline\zeta)\big).
	\end{equation}
\end{prop}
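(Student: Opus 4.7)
The plan is to prove Proposition \ref{prop:discrete and continuous 1} by a KMT coupling that transfers the discrete non-disconnection event to its continuous thickened counterpart with $\simeq$-level precision. The strategy is in spirit similar to the comparison between SRW and Brownian disconnection probabilities yielding \eqref{sharp estimate for D_m}, but now integrated against the ``localization'' by the initial configuration $\ol\zeta$ and the frontier-point condition at $z_n$.

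First, I would apply Lemma \ref{lem:KMT1} to jointly realize each discrete excursion underlying $\nu_{0,x_1}^{\Bc_n}\otimes \nu_{\partial\Bc_n,x_2}^{\Bc_n}$ with a Brownian excursion underlying $\mu_{0,x_1}^{\Dc_n}\otimes \mu_{\partial\Dc_n,x_2}^{\Dc_n}$, so that on a good event $H$ of complementary probability $O(e^{-10n})$ the coupled paths differ by at most $Kn$ in sup norm on $[0,\tau_n\vee T_n]$. The inflation radius of the thickening $\wt\zeta_i=\mathrm{THICK}_{\lfloor 29n/180\rfloor,n/6}(\zeta_i)$ equals $e^{15r/16}$ at annular scale $r\in[\lfloor 29n/180\rfloor,n/6]$, which is at least $e^{29n/192}$ and therefore polynomially dwarfs the KMT displacement $Kn$; this is what makes the coupling compatible with the thickened initial configuration. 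The total masses of the discrete and continuous excursion measures match at the $\simeq$-level via classical sharp asymptotics of discrete Green's function and Poisson kernel on $\Bc_n$ versus their continuum counterparts on $\Dc_n$, which takes care of the normalization.

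Next, on $H$ I would show that the indicators of $A_{0,\partial\Bc_n}(\ol\zeta)$ and $\wt A_{0,\partial\Dc_n}(\ol\zeta)$ agree up to an event whose mass is smaller by a factor $O(e^{-un})$ than the main term. For the forward direction, if $(\lambda_1,\lambda_2)$ keeps $z_n$ on the frontier of $\zeta_1\cup\lambda_1\cup\zeta_2\cup\lambda_2$, then the $Kn$-coupled Brownian motions $(W^1,W^2)$ lie well inside the $e^{15r/16}$-thickening at every relevant scale, so the thickened continuous configuration also fails to disconnect $z_n$ from infinity. For the reverse direction, I would combine the reverse separation lemma (Remark \ref{rmk:329}) with the niceness assumption $\ol\zeta\in\mathrm{NICE}_{n/6}(z_n)$ to restrict to configurations whose frontier property at $z_n$ is stable under $Kn$-perturbations, then run the same coupling backward; Proposition \ref{prop:Dnf} supplies the multiplicative bound needed to discard the residual bad configurations.

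The main obstacle is controlling the error uniformly at the innermost scale $r=\lfloor 29n/180\rfloor$, where the thickening margin is thinnest and where pairs of paths can conceivably straddle the thickening boundary in ways that flip the non-disconnection status when passing between discrete and continuous. Handling this requires a quantitative statement to the effect that the dominant contribution to both sides comes from pairs which are $e^{-un}$-far from this critical boundary, which in turn reduces, via a union bound over dyadic scales, to the sharp non-disconnection estimate \eqref{eq:rw-disc-exp} and its Brownian analogue \eqref{eq:non-disconnection probability} combined with the separation lemmas. Once these ingredients are in place, integrating over $(\lambda_1,\lambda_2)$ and $(W^1,W^2)$ against the coupled law yields \eqref{eq:560(1)}.
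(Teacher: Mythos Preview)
Your overall strategy (KMT coupling plus a symmetric-difference argument) is the right one and matches the paper in spirit, but there is a concrete error in your ``forward direction'' claim. You assert that if $(\lambda_1,\lambda_2)$ keeps $z_n$ on the frontier, then the $Kn$-coupled Brownian paths automatically do as well because the thickening margin $e^{15r/16}$ dwarfs $Kn$. This is not valid: the thickening in $\wt A_{0,\partial\Dc_n}(\ol\zeta)$ is applied only to the initial configuration $\zeta_i$, not to the long paths $\lambda_i$ themselves, which run from $0$ (resp.\ $\partial\Bc_n$) all the way to $x_i$ and leave the region $\Bc_{n/6}(z_n)$ entirely. Along these long paths the SRW may create narrow corridors of width $o(Kn)$ that the coupled Brownian motion closes off, and nothing in the niceness of $\ol\zeta$ prevents this. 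Both directions therefore require a Proposition~\ref{prop:Dnf}-type estimate applied to the \emph{extended} paths $\zeta_i\cup\lambda_i$ at all scales between $n/6$ and $n$, not just to $\zeta_i$ at the innermost thickening scale as you identify. Your last paragraph locates the obstacle at $r=\lfloor 29n/180\rfloor$, but the genuine difficulty is uniform over the whole macroscopic range.

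A second, structural gap: you propose to apply KMT directly to the interior-to-interior measures $\nu_{0,x_1}^{\Bc_n}$ and $\mu_{0,x_1}^{\Dc_n}$, but KMT couples \emph{unconditioned} walks and Brownian motions started from a common point; it does not directly couple paths conditioned to end at $x_1$ while staying in the domain. The paper (Appendix~D) circumvents this by first reformulating the measure identity as an equivalence of probabilities $\Pb(\Ec_n(\ol\zeta))\simeq\Pb(\widetilde\Ec_n(\ol\zeta))$ for unconditioned walks/Brownian motions started from the endpoints $x_1,x_2$, then establishing the common order $a(z)n^{-1}e^{-5\alpha n/6}$ via an explicit path decomposition, and only then invoking the coupling to bound the symmetric difference (citing Proposition~7.9 of \cite{GLPS23}). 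Without this reformulation and the up-to-constants step, you cannot turn a raw bound on the bad event into the relative $O(e^{-un})$ needed for $\simeq$.
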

We demonstrate the proof of Theorem~\ref{thm:one-point1} by leveraging the results established in Propositions~\ref{prop:Dnf}, \ref{prop:Dnf2}, \ref{prop:536}, and \ref{prop:discrete and continuous 1}.

\begin{proof}[Proof of Theorem~\ref{thm:one-point1}(i)]
	Let $p(\ol\zeta) = \Pb^{z_n,z_n}(\ol{S}[0,\tau_{n/6}] = \ol\zeta)$. Below, we use $(x_1,x_2)$ to denote the endpoints of $\ol\zeta$. Then,
	\begin{align}\label{eq:552}
		\Pb\big(\mathcal{A}_n(z)\big) &= \sum_{\ol\zeta\in\Yc_{n/6}(z_n)}\nu_{0,x_1}^{\Bc_n}\otimes\nu_{\partial\Bc_n,x_2}^{\Bc_n}\big(A_{0,\partial\Bc_n}(\overline\zeta)\big)p(\ol\zeta)\notag \\
		&\overset{\eqref{eq:Dsh}}{\simeq} \sum_{\ol\zeta\in\mathrm{NICE}_{n/6}(z_n)}\nu_{0,x_1}^{\Bc_n}\otimes\nu_{\partial\Bc_n,x_2}^{\Bc_n}\big(A_{0,\partial\Bc_n}(\overline\zeta)\big)p(\ol\zeta)\notag\\
		&\overset{\eqref{eq:560(1)}}{\simeq}\sum_{\ol\zeta\in\mathrm{NICE}_{n/6}(z_n)}\mu_{0,x_1}^{\Dc_n}\otimes\mu_{\partial\Dc_n,x_2}^{\Dc_n}\big(\wt A_{0,\partial\Dc_n}(\overline\zeta)\big)p(\ol\zeta)\notag \\ 
        &\overset{\eqref{eq:552(1)}}{\simeq} c_2 e^{-\alpha n/2} G_{\mathcal{D}}^{\fr}(z) \sum_{\ol\zeta\in\mathrm{NICE}_{n/6}(z_n)}\Pb\big( \widetilde D_{n/2}(\mathrm{THICK}_{\lfloor 29n/180\rfloor,n/6}(\ol\zeta))\big)
		p(\ol\zeta).
	\end{align}
	By Propositions~\ref{prop:Dnf} and \ref{prop:Dnf2},
	\begin{align*}
		&\sum_{\ol\zeta\in\mathrm{NICE}_{n/6}(z_n)}\Pb\big( \widetilde D_{n/2}(\mathrm{THICK}_{\lfloor 29n/180\rfloor,n/6}(\ol\zeta))\big)
		p(\ol\zeta) \notag\\ \overset{\eqref{eq:779}}{\simeq} &\sum_{\ol\zeta\in\mathrm{NICE}_{n/6}(z_n)}\Pb\big(  D_{n/2}(\ol\zeta)\big)
		p(\ol\zeta) \overset{\eqref{eq:Dsh}}{\simeq} \sum_{\ol\zeta\in\Yc_{n/6}(z_n)}\Pb\big(  D_{n/2}(\ol\zeta)\big)
		p(\ol\zeta)=\Pb( D_{n/2}) \overset{\eqref{sharp estimate for D_m}}{\simeq} qe^{-\alpha n /2}.
	\end{align*}
    Plugging the above estimate into \eqref{eq:552}, we get
	\[
	\Pb\big(\mathcal{A}_n(z)\big) \simeq c_2 G_{\mathcal{D}}^{\fr}(z) qe^{-\alpha n}.
	\]
    Picking $c_1 = 1/(c_2q)$, we conclude the proof.
\end{proof}

\subsection{Inward coupling result}\label{sec4.2:Inward coupling result}
We need an inward coupling result which we state without proof  below to establish the relation between the frontier-disk event and the frontier-point event in Section \ref{sec4.3:Discrete frontier disks and frontier points}.

Suppose $0<l\leq m\leq n-1\leq K-2$, and set $V_0=\Bc_K^c$. Suppose that $V_1$ and $V_2$ are two paths lying in $\Bc_n^c$ starting from $V_0$ and ending at $\partial \Bc_n$ such that $V_1 \cup V_2$ does not disconnect $\partial \Bc_n$ and $\infty$. For $j=1,2$, denote by $x_j$ the single endpoint of $V_j$. We refer to $(V_0,V_1,V_2,x_1,x_2)$ as an initial configuration. We often use $(V_0,U_1,U_2,y_1,y_2)$ to stand for another initial configuration.

For $j=1,2$, let $S^j$ be SRW started from $x_j$, $\Lambda_l^j = V_j \cup S^j[0,\tau_l^j]$ where $\tau_l^j=H_{\partial \Bc_l}(S^j)$. We define
\begin{align*}
	A_{l,n,K}^1&=A_{l,n,k}^1(V_0,V_1,V_2,x_1,x_2)=\{\Lambda_l^1\cup\Lambda_l^2\ \mathrm{does\ not\ disconnect}\ \partial \Bc_l\  \mathrm{and}\ \infty\},\\
	A_{l,n,K}^2&=\{\tau_l^1<\tau_K^1,\tau_l^2<\tau_K^2\},\mbox{ and }\\
	A_{l,n,K}&=A_{l,n,k}(V_0,V_1,V_2,x_1,x_2)=A_{l,n,K}^1\cap A_{l,n,K}^2.
\end{align*}
Let $\mu_{l,n,K}=\mu_{l,n,K}(V_0,V_1,V_2,x_1,x_2)$ be the probability measure induced by random walks $(S^1[0,\tau_l^1],S^2[0,\tau_l^2])$ conditioned on the event $A_{l,n,K}$. We say two pairs of paths $\ol\lambda = (\lambda^1,\lambda^2)$ and $\ol\lambda^* = (\lambda^{1,*},\lambda^{2,*})$ in $\Gamma_{\Bc_b^c,\partial \Bc_a}\times \Gamma_{\Bc_b^c,\partial \Bc_a}$ merge before hitting $\partial \Bc_b$ for the first time, if $\lambda^j(t+H_{\partial\Bc_b}(\lambda^j))=\lambda^{j,*}(t+H_{\partial\Bc_b}(\lambda^{j,*}))$ for all $t\geq 0$ and $j=1,2$. We write $\ol\lambda =_b \ol\lambda^*$ if they merge before hitting $\partial \Bc_b$.

\begin{thm}[Inward coupling for non-disconnecting walks]\label{thm:inward coupling}
	There exist $c,u>0$ such that for all $0<l\leq m\leq n-1\leq K-2$ and for any two initial configurations $(V_0,V_1,V_2,x_1,x_2)$ and $(V_0,U_1,U_2,y_1,y_2)$, we can define $\ol\lambda = (\lambda^1,\lambda^2)$ and $\ol\lambda^* = (\lambda^{1,*},\lambda^{2,*})$ on the same probability space $(\Omega, \Fc, P)$ such that $\ol\lambda$ has distribution $\mu_{l,n,K}(V_0,V_1,V_2,x_1,x_2)$ and $\ol\lambda^*$ has distribution $\mu_{l,n,K}(V_0,U_1,U_2,y_1,y_2)$. Moreover, under probability measure P, they are coupled as follows:
	\begin{equation}
	    P(\ol\lambda =_m \ol\lambda^*)\geq 1-ce^{-u(n-m)}.
	\end{equation}
\end{thm}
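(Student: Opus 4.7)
The plan is to establish Theorem 4.6 via a multi-scale inward iteration that decomposes the coupling problem into $O(n - m)$ layers of constant radial width, in which merger is achieved with a universally positive probability in each layer. The two key ingredients are the reverse separation lemma (Remark 2.8, together with the discrete version via Remark 2.10) and the sharp non-disconnection estimates (2.16) and (2.21).

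Fix an integer gap $\Delta$ large enough (to be chosen at the end) and set intermediate scales $n = r_{0} > r_{1} > \cdots > r_{J} = m$ with $r_{i} - r_{i+1} = \Delta$ and $J \asymp (n - m)/\Delta$. I would run both coupled pairs $\ol\lambda$ and $\ol\lambda^{*}$ simultaneously and, layer by layer, attempt merger within $\Bc_{r_{i}} \setminus \Bc_{r_{i+2}}$. The reverse separation lemma, applied to each of the conditional measures $\mu_{l,n,K}(V_{0}, V_{1}, V_{2}, x_{1}, x_{2})$ and $\mu_{l,n,K}(V_{0}, U_{1}, U_{2}, y_{1}, y_{2})$, ensures that with probability at least some universal $c_{1} > 0$, each pair arrives at $\partial\Bc_{r_{i+1}}$ in a well-separated state (endpoint pairwise distance $\ge e^{r_{i+1}}/4$ and at distance $\ge e^{r_{i+1}}/4$ from the relevant past trajectory). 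This is the inward analogue of Lemma 2.6.

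Next, once both pairs are well-separated at scale $r_{i+1}$, I would show that their conditional laws of the exit point on $\partial\Bc_{r_{i+2}}$ (given continued non-disconnection and continued survival inside $\Bc_{r_{i+1}} \setminus \Bc_{l}$) are absolutely continuous with respect to a common reference measure (e.g.\ harmonic measure from $\partial\Bc_{r_{i+1}}$) with Radon--Nikodym derivatives bounded above and below by universal constants. Combining the sharp estimate (2.16) with a uniform Harnack comparison, the well-separation hypothesis makes the non-disconnection event factorize cleanly across scales, so only the local geometry near $\partial\Bc_{r_{i+1}}$ dictates the exit law up to constants. An optimal coupling of these two exit distributions then agrees with probability at least some universal $c_{2} > 0$, and by the strong Markov property the agreement of the exit points extends to the coincidence of the entire trajectories inside $\Bc_{r_{i+2}}$.

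Iterating over the $J$ layers, the probability of never merging is bounded by $(1 - c_{1} c_{2})^{J} \le e^{-u(n-m)}$ for a suitable $u > 0$, which yields the claimed estimate. The additional conditioning in $A^{2}_{l,n,K}$ (reaching $\partial\Bc_{l}$ before $\partial\Bc_{K}$) is handled separately by a standard gambler's-ruin estimate and contributes only constant multiplicative factors to each per-layer probability. The hardest step will be the Radon--Nikodym comparison of the conditional exit laws: the non-disconnection event is genuinely long-range, so one must use the separation hypothesis and the sharp two-sided estimates to argue that, up to universal constants, only the local geometry near the current scale governs the exit distribution. Carrying this comparison out with sufficient uniformity in the initial configurations $(V_{0}, V_{1}, V_{2})$ and $(V_{0}, U_{1}, U_{2})$ is the main technical task of the proof.
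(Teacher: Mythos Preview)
The paper omits the proof entirely, deferring to Theorem~8.1 of \cite{GLPS23}, so there is no in-paper argument to compare against. Your multi-scale scheme---iterating the reverse separation lemma across layers of fixed radial width, comparing conditional hitting distributions at each well-separated stage, and maximally coupling to force merger with a uniformly positive per-layer probability---is exactly the standard architecture used in \cite{GLPS23} (and earlier in \cite{S12} for the non-intersection setting), so your proposal is on target.

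Two points of precision are worth flagging. First, when you write ``exit point on $\partial\Bc_{r_{i+2}}$'' you must couple the \emph{pair} of exit positions $(S^{1}(\tau_{r_{i+2}}),S^{2}(\tau_{r_{i+2}}))$ simultaneously for both configurations, not a single point; the Radon--Nikodym comparison is for the joint law of this pair. Second, and more substantive: after merger at some scale $r$, the future laws under the two measures $\mu_{l,n,K}(V_0,V_1,V_2,\cdot)$ and $\mu_{l,n,K}(V_0,U_1,U_2,\cdot)$ are \emph{not} automatically identical, because the global conditioning event $A_{l,n,K}$ still depends on the (different) pasts through the obstacle sets $V_j\cup S^{j}[0,\tau_r]$ versus $U_j\cup S^{j,*}[0,\tau_r]$. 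The resolution in \cite{GLPS23} is to show that, with high probability under the conditional measure, the walks after $\tau_r$ do not backtrack to $\partial\Bc_{r+\Delta}$; on this event the future conditioning depends only on the configuration inside $\Bc_{r+\Delta}$, which has been made to coincide. Your outline gestures at this (``the non-disconnection event is genuinely long-range\ldots''), but this no-backtracking step is an essential ingredient rather than a side issue, and should be isolated as a lemma in its own right.
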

The proof is similar to that of Theorem 8.1 in \cite{GLPS23}, and hence omitted.

\subsection{Discrete frontier disks and frontier points} \label{sec4.3:Discrete frontier disks and frontier points}

In this subsection, we establish precise connections between the probabilities of frontier-disk events and frontier-point events. Our main results are Propositions~\ref{thm:one-point disk and point} and~\ref{thm:two-point disk and point}, which quantify these relationships. 
To begin with, we will define the frontier-disk event in the discrete setting, which is similar to Definition~\ref{def:non-disconnecting disk event continuous}.
\begin{defi}[Frontier disk in discrete]
	For $z \in \Dc\setminus 0$, the disk $\Bc_{3n/4}(z_n)$ is a frontier disk for the path $\lambda$ if the followings hold.
    \begin{itemize}
        \item $\lambda \cap \Bc_{3n/4}(z_n) \neq \emptyset$, 
        \item $\lambda_1 \cup \lambda_2$ does not disconnect $\Bc_{3n/4}(z_n)$ from $\infty$,
        \item and the intermediate segment $\omega \subseteq \Bc_{5n/6}(z_n)$,
    \end{itemize}
    where $\lambda = \lambda_1 \oplus \omega \oplus \lambda_2$ is decomposed according to its first and last visits to $\partial \Bc_{3n/4}(z_n)$.
\end{defi}
Let $K_{3n/4}(z)$ denote the event that $\Bc_{3n/4}(z_n)$ is a frontier disk for $\lambda$. The following two theorems, whose proofs are postponed to the end of this subsection, reveals the sharp relation between the frontier-disk event and the frontier-point event.
\begin{prop} \label{thm:one-point disk and point} 
	There exists a function $f(n)$ such that for all $z \in \mathcal{D}$ with $\mathrm{dist}(0,z,\partial \mathcal{D})\ge e^{-n/6}$, we have
	\begin{equation}
		\Pb\big(K_{3n/4}(z)\big)\simeq f(n)\Pb\big(\mathcal{A}_n(z)\big).
	\end{equation}
\end{prop}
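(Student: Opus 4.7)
The plan is to show a factorization $\Pb(K_{3n/4}(z)) \simeq g(n)\,G_{\mathcal{D}}^{\fr}(z)$ for some function $g$ depending only on $n$, paralleling Theorem~\ref{thm:one-point1}(i); dividing by the expression from Theorem~\ref{thm:one-point1}(i) then gives $f(n) := c_1 e^{\alpha n} g(n)$, manifestly independent of $z$.

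To obtain this factorization, I would decompose $S[0,\tau_n]$ at its first entry $y_1$ and last exit $y_2$ of $\partial\Bc_{3n/4}(z_n)$, writing $S[0,\tau_n] = \lambda_1 \oplus \omega \oplus \lambda_2$. The event $K_{3n/4}(z)$ then requires $\omega \subseteq \Bc_{5n/6}(z_n)$ together with $\lambda_1 \cup \lambda_2$ not disconnecting $\Bc_{3n/4}(z_n)$ from $\infty$. By the strong Markov property at $y_1$ and $y_2$ and summation over entry/exit points, $\Pb(K_{3n/4}(z))$ factors (after averaging) into an \emph{outer factor} (the non-disconnection of $\Bc_{3n/4}(z_n)$ by $\lambda_1, \lambda_2$) and an \emph{intermediate factor} (the excursion $\omega : y_1 \to y_2$ staying in $\Bc_{5n/6}(z_n)$).

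For the outer factor, I would adapt the proof of Theorem~\ref{thm:one-point1}(i), now treating $\partial\Bc_{3n/4}(z_n)$ as the inner boundary rather than the point $z_n$ itself. Analogs of Propositions~\ref{prop:Dnf}-\ref{prop:discrete and continuous 1} at scale $3n/4$, combined with Theorem~\ref{thm:inward coupling}, should extract $G_{\mathcal{D}}^{\fr}(z)$ as the universal ``outer'' factor times a two-arm probability $\asymp e^{-\alpha n/4}$ (via the sharp estimate~\eqref{sharp estimate for D_m} applied at the appropriate scale). For the intermediate factor, the excursion measure of $\omega$ between two points on $\partial\Bc_{3n/4}(z_n)$ confined to $\Bc_{5n/6}(z_n)$ contributes an $\asymp n$ term, mirroring the $s^{-1}$ normalization appearing in the definition of $L_s(z)$ in~\eqref{eq:defLsz}.

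The main obstacle will be establishing the sharp $\simeq$ (rather than merely $\asymp$) comparison at scale $3n/4$. This requires the separation lemmas (Lemma~\ref{lem:rw-disc-sep} together with the inward counterparts in Remarks~\ref{rmk:315} and~\ref{rmk:329}) applied at the disk boundary to force well-separated entry/exit points $y_1, y_2 \in \partial\Bc_{3n/4}(z_n)$, combined with the KMT coupling (Lemma~\ref{lem:KMT1}) to transfer the discrete intermediate-part measure to its continuum counterpart uniformly in $(y_1, y_2)$; this is essential so that coupling errors remain multiplicatively $(1+O(e^{-un}))$ across the three relevant scales $n/6, 3n/4, 5n/6$. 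Once the outer and intermediate factors are established, we conclude $\Pb(K_{3n/4}(z)) \simeq g(n)\,G_{\mathcal{D}}^{\fr}(z)$ with $g(n) \asymp n\,e^{-\alpha n/4}$, and hence $f(n) \asymp n\,e^{3\alpha n/4}$.
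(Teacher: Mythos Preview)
Your approach is correct in principle but takes a genuinely different and more laborious route than the paper. You propose to establish the factorization $\Pb(K_{3n/4}(z)) \simeq g(n)\,G_{\mathcal{D}}^{\fr}(z)$ independently by redoing the entire chain Propositions~\ref{prop:Dnf}--\ref{prop:discrete and continuous 1} with $\partial\Bc_{3n/4}(z_n)$ as the inner boundary, and then divide by Theorem~\ref{thm:one-point1}(i). The paper instead decomposes \emph{both} events at the same circle $\Sc = \partial\Bc_{3n/4+1}(z_n)$: writing $\lambda = \eta_1 \oplus \omega_* \oplus [\eta_2]^R$, both $\Pb(K_{3n/4}(z))$ and $\Pb(\Ac_n(z))$ share exactly the same outer factor $\|p_{0,n}^z\|$ (the total mass of non-disconnecting pairs $(\eta_1,\eta_2)$), and the remaining inner factors $\mu_0^z[\phi_2]$ and $\mu_0^z[\psi_2]$ are translation-invariant in $z$ (they depend only on the local behavior near $z_n$, which by the inward coupling Theorem~\ref{thm:inward coupling} is asymptotically the fixed reference measure $\mu_0$ translated by $z_n$). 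Hence $f(n) := \mu_0^z[\phi_2]/\mu_0^z[\psi_2]$ is manifestly $z$-independent. This bypasses the need to re-prove any analog of Propositions~\ref{prop:536} or~\ref{prop:discrete and continuous 1}, or to pass through $G_{\mathcal{D}}^{\fr}(z)$ at all; the $\simeq$ precision comes cheaply from Lemma~\ref{lem:coupling of measures} and the elementary truncation Lemmas~\ref{lem:estimate of phi_1}--\ref{lem:total mass of Ac}, rather than the full KMT machinery you anticipate needing.
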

Recalling the definition of $\Vc$ from \eqref{eq:def of nice box}, we have the following proposition.
\begin{prop} \label{thm:two-point disk and point}
	For any $V\in\mathcal{V}$ and $z,w\in V$ with $|z-w|\ge e^{-n/6}$, we have
	\begin{equation}
		\Pb\big(K_{3n/4}(z)\cap \mathcal{A}_n(w)\big)\simeq_V f(n)\Pb\big(\mathcal{A}_n(z)\cap \mathcal{A}_n(w)\big).
	\end{equation}
\end{prop}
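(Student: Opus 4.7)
The plan is to reduce the two-point claim to the one-point analogue (Proposition~\ref{thm:one-point disk and point}) by conditioning on the local behavior of $S$ near $w_n$ and then using the inward coupling theorem (Theorem~\ref{thm:inward coupling}) to decouple the resulting local analysis near $z_n$ from that conditioning. The underlying geometric fact is that $V\in\Vc$ together with $|z-w|\ge e^{-n/6}$ gives $|z_n-w_n|\gtrsim e^{5n/6}$, so that the disks $\Bc_{3n/4}(z_n)$ and $\Bc_{n/6}(w_n)$ are disjoint and both lie well inside $\Bc_n$ with a $V$-dependent macroscopic margin.

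First, we decompose the probability at $w$ as in the proof of Theorem~\ref{thm:one-point1}(ii):
\begin{equation*}
\Pb\big(K_{3n/4}(z)\cap\mathcal{A}_n(w)\big)=\sum_{\ol\zeta\in\Yc_{n/6}(w_n)}p(\ol\zeta)\,\Pb\big(K_{3n/4}(z)\cap E_w(\ol\zeta)\big),
\end{equation*}
where $p(\ol\zeta)=\Pb^{w_n,w_n}(\ol S[0,\tau_{n/6}]=\ol\zeta)$ and $E_w(\ol\zeta)$ is the event that the ``outside'' walk (issued from $0$ and from $\partial\Bc_n$), together with $\ol\zeta$, places $w_n$ on the frontier of $S[0,\tau_n]$. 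Proposition~\ref{prop:Dnf} lets us restrict the sum to $\ol\zeta\in\mathrm{NICE}_{n/6}(w_n)$ at a relative cost of $O(e^{-un})$.

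Next, for each fixed nice $\ol\zeta$, we perform the same decomposition near $z_n$ as in the proof of Proposition~\ref{thm:one-point disk and point}: split the walk at its first entry and last exit of $\Bc_{3n/4}(z_n)$, turning $K_{3n/4}(z)$ into an event about two non-disconnecting excursions in $\Bc_n\setminus\Bc_{3n/4}(z_n)$ together with an intermediate excursion confined to $\Bc_{5n/6}(z_n)$, and turning $\mathcal{A}_n(z)$ into the analogous event at the scale $\Bc_{n/6}(z_n)$. Applying Theorem~\ref{thm:inward coupling} at scales $n/6<3n/4<n$ around $z_n$, the conditional law of the walk in $\Bc_n\setminus\Bc_{3n/4}(z_n)$ given the macroscopic non-disconnection structure and the configuration $\ol\zeta$ differs from the analogous one-point conditional law (where only $\mathcal{A}_n(z)$ is imposed) by at most $O(e^{-un})$ in total variation, uniformly in $\ol\zeta$. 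Hence
\begin{equation*}
\Pb\big(K_{3n/4}(z)\cap E_w(\ol\zeta)\big)\simeq f(n)\,\Pb\big(\mathcal{A}_n(z)\cap E_w(\ol\zeta)\big),
\end{equation*}
with $f(n)$ the same function produced by Proposition~\ref{thm:one-point disk and point}. Summing over $\mathrm{NICE}_{n/6}(w_n)$ and extending back to $\Yc_{n/6}(w_n)$ via Proposition~\ref{prop:Dnf} gives the stated relation, with $V$-dependence arising only from the uniform lower bound on $\dist(0,V)\wedge\dist(V,\partial\Dc)$ used to separate the scales involved.

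\textbf{Main obstacle.} The technical heart is the uniform-in-$\ol\zeta$ application of Theorem~\ref{thm:inward coupling}. One must verify that the joint non-disconnection of $z_n$ and $w_n$ imposes, at $\partial\Bc_{3n/4}(z_n)$, a condition on the walks that is asymptotically the same as non-disconnection of $z_n$ alone. This rests on the spatial separation $|z_n-w_n|\gtrsim e^{5n/6}$ together with the exponential merging in Theorem~\ref{thm:inward coupling} to absorb the influence of $\ol\zeta$ into the $O(e^{-un})$ error. Additional care is required when the intermediate excursion for $K_{3n/4}(z)$ wanders towards $\Bc_{n/6}(w_n)$, but the separation lemmas (Lemma~\ref{lem:rw-disc-sep} and its reverse version from Remark~\ref{rmk:329}) rule out such configurations with exponentially small probability.
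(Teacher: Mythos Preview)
Your approach differs from the paper's and has a gap at the decisive point.

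The paper does not condition on the local configuration at $w_n$. Instead it decomposes both $K_{3n/4}(z)\cap\mathcal{A}_n(w)$ and $\mathcal{A}_n(z)\cap\mathcal{A}_n(w)$ according to the \emph{temporal order} in which the walk visits the vicinity of $z_n$ relative to the frontier point at $w_n$: either all visits to the vicinity of $z$ occur before the frontier point at $w$ (event $E_1$), or all after ($E_2$), or the walk visits the vicinity of $z$, then makes a frontier point at $w$, then returns to the vicinity of $z$ ($E_3$). On $E_1$ and $E_2$ the machinery of Proposition~\ref{thm:one-point disk and point} (Lemma~\ref{lem:coupling of measures} and the functionals $\phi_2,\psi_2$) applies verbatim, because the walk between its first entry and last exit of $\Bc_{3n/4+1}(z_n)$ never leaves for the vicinity of $w$, so the entire $w$-constraint sits in the ``outer'' pair of excursions and is absorbed into an admissible initial configuration for Theorem~\ref{thm:inward coupling}. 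The event $E_3$ is then shown to be negligible via the one-arm penalty (Lemma~\ref{one-arm disconnection exponent}) for the extra macroscopic backtracking.

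Your proposal misses the $E_3$ case. When you ``split the walk at its first entry and last exit of $\Bc_{3n/4}(z_n)$'', the intermediate segment may include the entire visit to $\Bc_{n/6}(w_n)$ and hence carry the configuration $\ol\zeta$. On that event the conditioning $E_w(\ol\zeta)$ is a constraint on the \emph{intermediate} segment, not on the outer pair, so Theorem~\ref{thm:inward coupling} cannot be invoked as you describe and no factor $f(n)$ can be extracted uniformly in $\ol\zeta$. For $K_{3n/4}(z)$ this situation is excluded by definition (the intermediate segment is confined to $\Bc_{5n/6}(z_n)$), but for $\mathcal{A}_n(z)$ it is not; this is precisely $E_3$. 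Your appeal to the separation lemmas is not the right tool here --- what makes $E_3$ negligible is the one-arm disconnection cost of the additional crossing, as in the paper. Once that estimate is inserted, the preliminary conditioning on $\ol\zeta$ at $w_n$ becomes superfluous and the argument reduces to the paper's.
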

Decompose the path $\lambda=S[0,\tau_n]$ into three segments by its first and last visits to $\Sc=\partial \Bc_{3n/4+1}(z_n)$. More precisely, we write $\lambda = \eta_1\oplus\omega_*\oplus[\eta_2]^R$, where
\begin{itemize}
	\item $\eta_1$ is the part of $\lambda$ from $0$ to its first hitting of $\Sc$,
	\item $\omega_*$ is the intermediate part between the first and last visits to $\Sc$,
	\item $\eta_2$ is the reversed path from $\tau_n$ to the last exit from $\Sc$.
\end{itemize}
Let $p_{0,n}^z$ denote the measure $\nu_{0,\Sc}^U \otimes \nu_{0,\Sc}^U$ restricted to the non-disconnecting event where $\eta_1\cup\eta_2$ does not disconnect $\Sc$ from $\infty$. Let $\mu_0$ be any reference measure $$\mu_{3n/4+1,19n/24,5n/6}(V_0,V_1,V_2,y_1,y_2)$$ defined in Section \ref{sec4.2:Inward coupling result}, and let $\mu_0^z$ be the image of $\mu_0$ under the translation $x\mapsto x+z_n$. We establish a coupling between $p_{0,n}^z$ and $\mu_0$ in the following lemma.
\begin{lemma}\label{lem:coupling of measures}
	There exists a coupling $\Pb$ of $\ol\eta$ and $\ol\eta'$ such that $\ol\eta\sim\widehat{p}_{0,n}^z$, $\ol\eta'\sim \mu_0^z$, and
	\begin{equation}
		\Pb(\ol\eta=_{37n/48}\ol\eta')\geq 1-ce^{-un},
	\end{equation}
	where $c,u>0$ are constants. Here $\ol\initialconfig=_{m}\ol\initialconfig'$ means that $\ol\initialconfig,\ol\initialconfig'\in \widetilde{\mathcal{Y}}$ merge before reaching $\partial \mathcal{D}_{-m}$.
\end{lemma}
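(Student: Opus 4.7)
The plan is to derive this coupling from the inward coupling theorem (Theorem~\ref{thm:inward coupling}), after decomposing $\widehat{p}_{0,n}^z$ into an ``outer'' initial configuration and an ``inner'' continuation in the sense of Section~\ref{sec4.2:Inward coupling result}. Translate the whole picture by $-z_n$ so that $\Sc$ becomes $\partial\Bc_{3n/4+1}$ centered at the origin, and set $l := 3n/4+1$, $n' := 19n/24$, $K' := 5n/6$, and $m := 37n/48$. For $n$ sufficiently large these satisfy $l \le m \le n'-1 \le K'-2$, and the gap $n' - m = n/48$ will produce the exponential error $e^{-un}$ claimed in the lemma.

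The construction would proceed as follows. Given a sample $\ol\eta = (\eta_1,\eta_2)$ from $\widehat{p}_{0,n}^z$, let $\tau_j$ denote the last exit of $\eta_j$ from $\Bc_{K'}(z_n)$, and $\sigma_j > \tau_j$ the first subsequent entry of $\eta_j$ into $\Bc_{n'}(z_n)$. Since each $\eta_j$ terminates in $\Bc_l(z_n) \subset \Bc_{n'}(z_n) \subset \Bc_{K'}(z_n)$, both times exist, and the tail segment $\eta_j[\sigma_j,\mathrm{end}]$ stays inside $\Bc_{K'}(z_n)$ and reaches $\partial\Bc_l(z_n)$ without exiting. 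I would take, as the initial configuration after translation, $V_0 := \Bc_{K'}^c$, $V_j := \eta_j[\tau_j,\sigma_j] \subset \Bc_{n'}^c$, and $x_j := \eta_j(\sigma_j) \in \partial\Bc_{n'}$; because disconnecting $\partial\Bc_{n'}$ from $\infty$ is stronger than disconnecting $\partial\Bc_l$ from $\infty$, the non-disconnection of $\ol\eta$ automatically forces the non-disconnection of $V_1 \cup V_2$ from $\partial\Bc_{n'}$ to $\infty$, so $(V_0,V_1,V_2,x_1,x_2)$ is a valid initial configuration in the sense of Section~\ref{sec4.2:Inward coupling result}.

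The crucial step is to verify that, conditional on this outer data, the law of the inner pieces $(\eta_1[\sigma_1,\mathrm{end}], \eta_2[\sigma_2,\mathrm{end}])$ is exactly the translate of $\mu_{l,n',K'}(V_0,V_1,V_2,x_1,x_2)$: the strong Markov property at $\sigma_j$ makes the conditional law that of two fresh SRWs from the $x_j$; the definition of $\sigma_j$ as the \emph{last} entry into $\Bc_{n'}(z_n)$ forces these walks to reach $\partial\Bc_l(z_n)$ before exiting $\Bc_{K'}(z_n)$, which is event $A^2_{l,n',K'}$; and the ambient non-disconnection condition corresponds exactly to $A^1_{l,n',K'}$ applied to the concatenated pair $\Lambda^j_l = V_j\cup\eta_j[\sigma_j,\mathrm{end}]$. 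Since $\mu_0^z$ is by definition of the same $\mu_{l,n',K'}$-form for a fixed reference configuration, applying Theorem~\ref{thm:inward coupling} pointwise to each realization of the outer data produces a coupling of the two inner distributions that merges before $\partial\Bc_m(z_n)$ with probability at least $1 - c e^{-u(n'-m)} = 1 - c e^{-un/48}$. Pasting the outer parts back on their own marginals and integrating over the joint law of the outer data gives the coupling claimed in the lemma.

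The main obstacle I anticipate is the identification step above: one has to check carefully that the normalization arising from conditioning $\widehat{p}_{0,n}^z$ on the outer decomposition (with the non-disconnection constraint) really reproduces the $\mu_{l,n',K'}$-measure, capturing both $A^1$ and $A^2$ without leaving a residual Radon--Nikodym factor on the inner walks. Once this identification is in place, the merging estimate follows directly from Theorem~\ref{thm:inward coupling}, and the outer data play no further role.
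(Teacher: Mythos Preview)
Your strategy---decompose $\ol\eta$ into an outer initial configuration and an inner piece, then invoke Theorem~\ref{thm:inward coupling}---is correct, and your choice of scales matches the paper's. But the specific decomposition you propose does not cleanly produce the $\mu_{l,n',K'}$-measure.

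First, $\sigma_j$ is defined through the last exit $\tau_j$ and is therefore not a stopping time, so you cannot literally invoke the strong Markov property at $\sigma_j$. More importantly, even granting a combinatorial path factorization, taking $V_j=\eta_j[\tau_j,\sigma_j]$ discards the initial segment $\eta_j[0,\tau_j]$: the non-disconnection condition defining $\widehat p^z_{0,n}$ constrains the \emph{full} union $\eta_1\cup\eta_2$, whereas $A^1_{l,n',K'}$ constrains only $V_j$ together with the inner walk. These are different events, so the conditional law of the inner pair is not $\mu_{l,n',K'}(V_0,V_1,V_2,x_1,x_2)$ as you claim. Taking instead $V_j=\eta_j[0,\sigma_j]$ would repair the non-disconnection mismatch, but then $V_j$ need not lie in $\Bc_{n'}(z_n)^c$, violating the hypothesis of Theorem~\ref{thm:inward coupling}. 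The ``residual Radon--Nikodym factor'' you worried about is exactly this discrepancy, and it does not vanish under your decomposition.

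The paper avoids both issues by decomposing at the \emph{first} hitting time of $\partial\Bc_{19n/24}(z_n)$, a genuine stopping time, so that the outer piece $\xi_j=\eta_j[0,H_{\partial\Bc_{19n/24}(z_n)}]$ automatically lies in $\Bc_{19n/24}(z_n)^c$ and carries the full non-disconnection constraint. The price is that $A^2$ (the inner walk staying in $\Bc_{5n/6}(z_n)$) is no longer automatic; the paper handles this with an extra estimate, using Lemma~\ref{one-arm disconnection exponent} to show that the inner walk exits $\Bc_{5n/6}(z_n)$ only with probability $O(e^{-un})$. After further conditioning on this exponentially likely event, the conditional inner law is exactly of $\mu_{l,n',K'}$-form, and Theorem~\ref{thm:inward coupling} applies directly.
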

\begin{proof}
	Let $\overline\eta=(\eta_1,\eta_2)\sim \widehat p^z_{0,n}$ and $\overline\eta'=(\eta'_1,\eta'_2)\sim \mu^z_0$. Decompose $\overline\eta=\overline \xi\oplus\overline\initialconfig$ where $\overline \xi=\overline\eta[0,H_{\partial\Bc_{19n/24}(z_n)}]$. By applying the strong Markov property, conditioned on $\overline\xi$ with endpoints $\overline y=(y_1,y_2)$, the distribution of $\overline\initialconfig$ corresponds to a pair of independent simple random walks starting from $\overline y$, stopped at $\Sc$, and conditioned on the event that they hit $\Sc$ before exiting $\Bc_n$ and $\eta_1\cup\eta_2$ does not disconnect $\Sc$ from $\infty$. By using Lemma~\ref{one-arm disconnection exponent}, there is a $u>0$ such that for $i=1,2$,
	\[
	\Pb\big(\initialconfig_i\cap \Bc_{5n/6}(z_n)^c \neq\emptyset \mid \ol\xi\,\big)=O(e^{-un}).
	\]
	Hence, upon further conditioning on  $\initialconfig_1$ and $\initialconfig_2$ remaining within $\Bc_{5n/6}(z_n)$, we have 
	\[
	\overline\initialconfig\sim \mu_{3n/4+1,19n/24,5n/6}(\Bc_{5n/6}(z_n)^c, \xi_1,\xi_2,y_1,y_2).
	\]
	Repeating this decomposition for $\overline\eta'$, we analogously find that conditioned on $\overline\xi'$ and $\overline\initialconfig'$ staying inside $\Bc_{5n/6}(z_n)$, 
	\[
	\overline\initialconfig'\sim \mu_{3n/4+1,19n/24,5n/6}(\Bc_{5n/6}(z_n)^c, \xi'_1,\xi'_2,y'_1,y'_2).
	\]
	Invoking the inward coupling (Theorem \ref{thm:inward coupling}) to $\ol\initialconfig$ and $\ol\initialconfig'$, we establish the desired result.
\end{proof}

Having established the coupling framework, we proceed by sampling $\ol\eta$ according to the measure $p_{0,n}^z$. Our analysis restricts consideration to the intermediate path segment $\omega_*$, which satisfies the frontier-disk event $K_{3n/4}(z)$ and frontier-point event $\Ac_{n}(z)$ respectively. We begin with the frontier-disk event. 

Let $\ol\eta=(\eta_1,\eta_2)\in\Gamma_{0,\Sc}^U \times \Gamma_{\partial \Bc_n,S}^U$ be a pair of non-disconnecting paths terminating at points $x_1,x_2$ on $\Sc$. Define $p^{\ol\eta}$ as the measure $\nu_{x_1,x_2}^{\partial \Bc_n}$ restricted to paths $\omega_*$ such that 
\begin{itemize}
	\item $\omega_*\cap \Bc_{3n/4}(z_n)\neq \emptyset$, decomposable as  $\omega_*=\intermediatepart_1\oplus\omega\oplus\intermediatepart_2$ according to its first and last visits to $\partial \Bc_{3n/4}(z_n)$;
	\item $(\eta_1\cup\intermediatepart_1)\cup(\eta_2\cup\intermediatepart_2)$ does not disconnect $\Bc_{3n/4}(z_n)$ and $\infty$, with $\omega\subseteq\Bc_{5n/6}(z_n)$.
\end{itemize}
We define the functionals
\begin{equation}
	\phi_1(\ol\eta):=\left \| p^{\ol\eta} \right \| , \,\,\mathrm{and}\,\,\phi_2(\ol\eta):=p^{\ol\eta}[1_{\intermediatepart_1,\intermediatepart_2\subseteq\Bc_{37n/48}(z_n)}].
\end{equation}
The probability of the frontier-disk event is expressed as
\begin{equation}\label{eq:716}
    \Pb\big(K_{3n/4}(z)\big)=p_{0,n}^{z}[\phi_1] = \left\| p_{0,n}^{z} \right\|  \widehat{p}_{0,n}^{z}[\phi_1].
\end{equation}
To estimate $\Pb(K_{3n/4}(z))$, we establish the following results related to the functionals $\phi_1$ and $\phi_2$.

\begin{lemma}\label{lem:estimate of phi_1}The functionals $\phi_1$ and $\phi_2$ satisfy
	\[
	\max_{\ol\eta}\phi_1(\ol\eta)\asymp \widehat{p}_{0,n}^{z}[\phi_2] \asymp n,
	\]
	where the maximum is taken over all $\ol\eta=(\eta_1,\eta_2) \in \Gamma_{0,\Sc}^U\times\Gamma_{\partial \Bc_n,\Sc}^U$ such that $\eta_1\cup\eta_2$ does not disconnect $\Sc$ from $\infty$ (in subsequent proofs, the maximum is always taken in this manner).
\end{lemma}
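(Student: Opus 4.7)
The plan is to establish the two-sided estimate via two matching extremal inequalities: the upper bound $\max_{\ol\eta}\phi_1(\ol\eta) \le Cn$ and the lower bound $\widehat p^z_{0,n}[\phi_2] \ge cn$. Since $\phi_2(\ol\eta) \le \phi_1(\ol\eta)$ pointwise and $\widehat p^z_{0,n}$ is a probability measure, the chain
\[
 cn \le \widehat p^z_{0,n}[\phi_2] \le \widehat p^z_{0,n}[\phi_1] \le \max_{\ol\eta}\phi_1(\ol\eta) \le Cn
\]
immediately yields the lemma, so I focus on these two extremal bounds.

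For the upper bound, I would drop the non-disconnection constraint (which only decreases the mass) and decompose $\omega_{\ast}=\zeta_1\oplus\omega\oplus\zeta_2$ at the first and last visits $y_1,y_2$ to $\partial\Bc_{3n/4}(z_n)$. The strong Markov property then yields
\[
\phi_1(\ol\eta) \le \sum_{y_1,y_2 \in \partial\Bc_{3n/4}(z_n)} A(y_1)\, g_{\Bc_{5n/6}(z_n)}(y_1,y_2)\, C(y_2),
\]
where $A(y_1)$ is the mass of first-hit walk segments from $x_1$ to $y_1$ in $\Bc_n\setminus\Bc_{3n/4}(z_n)$ and $C(y_2)$ is the symmetric last-exit quantity. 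Standard two-dimensional potential-kernel estimates give $g_{\Bc_{5n/6}(z_n)}(y_1,y_2) \le C' n$ uniformly (the outer radius having log-size $5n/6$), while $\sum_{y_1}A(y_1)$ and $\sum_{y_2}C(y_2)$ are bounded above by the corresponding hitting probabilities, each $\le 1$. Hence $\phi_1(\ol\eta)\le C n$.

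For the lower bound on $\widehat p^z_{0,n}[\phi_2]$, I invoke the reverse separation lemma at scale $\Sc$ to produce a constant $c_0>0$ and an event $E_{\text{sep}}$ of $\widehat p^z_{0,n}$-probability at least $c_0$ on which $x_1,x_2$ are macroscopically separated along $\Sc$ and each is adjacent to a free half-disk of scale $\asymp e^{3n/4}$ cleared by $\eta_1,\eta_2$. On $E_{\text{sep}}$, I consider middle paths $\omega_{\ast}=\zeta_1\oplus\omega\oplus\zeta_2$ in which $\zeta_1$ is a random walk excursion from $x_1$ first hitting $\partial\Bc_{3n/4}(z_n)$ at some $y_1$ in a fixed macroscopic neighborhood of $x_1$ on the inner boundary, with $\zeta_1\subseteq\Bc_{37n/48}(z_n)\setminus\Bc_{3n/4}(z_n)$; $\zeta_2$ is the symmetric construction near $x_2$; and $\omega$ is any random walk from $y_1$ to $y_2$ in $\Bc_{5n/6}(z_n)$. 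By a 2D gambler's ruin estimate in the annulus of log-width $n/48$, the summed mass $\sum_{y_1}A(y_1)$ restricted to the prescribed neighborhood is $\ge c_1>0$, and likewise for the $\zeta_2$ factor. Since $|y_1-y_2|\asymp e^{3n/4}$, the potential-kernel expansion delivers $g_{\Bc_{5n/6}(z_n)}(y_1,y_2) \asymp n$. Non-disconnection of $(\eta_1\cup\zeta_1)\cup(\eta_2\cup\zeta_2)$ from $\infty$ is preserved because the two tubes for $\zeta_1,\zeta_2$ lie disjointly inside the two free half-disks provided by $E_{\text{sep}}$. Multiplying these three contributions yields $\phi_2(\ol\eta)\ge c_2 n$ on $E_{\text{sep}}$, whence $\widehat p^z_{0,n}[\phi_2] \ge c_0 c_2 n$. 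The main technical obstacle lies precisely in this non-disconnection verification, which rests on the separation lemma furnishing disjoint free tubes near the two endpoints and on the annular gambler's ruin bound confining $\zeta_1,\zeta_2$ inside $\Bc_{37n/48}(z_n)$; the upper bound, by contrast, is a straightforward Green's-function estimate.
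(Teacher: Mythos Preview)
Your proposal is correct and follows essentially the same approach as the paper's proof. Both arguments establish the chain $\widehat p^z_{0,n}[\phi_2]\le \widehat p^z_{0,n}[\phi_1]\le \max_{\ol\eta}\phi_1(\ol\eta)$ and then bound the two ends: the upper bound via a Green's-function estimate (the paper just says ``standard Green's function estimates in $\Bc_n$'' where you spell out the first/last-visit decomposition), and the lower bound via the reverse separation lemma at $\Sc$ to produce well-separated endpoints, followed by attaching short connectors $\zeta_1,\zeta_2$ in disjoint regions that preserve non-disconnection and stay in $\Bc_{37n/48}(z_n)$, leaving the middle piece $\omega$ to contribute mass $\asymp n$ via the Green's function in $\Bc_{5n/6}(z_n)$. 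The paper phrases the last step as ``well-separated $\zeta_1,\zeta_2$ at $\partial\Bc_{3n/4}(z_n)$'' while you phrase it as macroscopic arcs with $|y_1-y_2|\asymp e^{3n/4}$; these are equivalent for the Green's-function lower bound. One small imprecision: the ``free half-disks'' of radius $e^{3n/4+1}/4$ from the separation quality do not quite reach the inner circle $\partial\Bc_{3n/4}(z_n)$, so the tubes for $\zeta_i$ should really be angular sectors of the annulus rather than the literal half-disks---but this is a routine adjustment and the paper's proof is equally brief on this point.
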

\begin{proof}
	From standard Green’s function estimates in $\Bc_n$, we derive $$\max_{\ol\eta}\phi_1(\ol\eta)\lesssim n.$$ For the lower bound, observe that $$\widehat{p}_{0,n}^{z}[\phi_2] \leq \widehat{p}_{0,n}^{z}[\phi_1] \leq \max_{\ol\eta}\phi_1(\ol\eta).$$ Thus, it suffices to prove $\widehat{p}_{0,n}^{z}[\phi_2] \gtrsim n$. By the reverse separation lemma (see Remarks \ref{rmk:315} and \ref{rmk:329}), $\ol\eta$ sampled from $\widehat{p}_{0,n}^{z}$ are well-separated (see Lemma \ref{lem:rw-disc-sep}) at $\Sc$ with positive probability. Conditional on this well-separation, we may attach $\intermediatepart_1$ and $\intermediatepart_2$ such that:
    \begin{itemize}
        \item $(\eta_1\cup\intermediatepart_1)\cup(\eta_2\cup\intermediatepart_2)$ does not disconnect $\Bc_{3n/4}(z_n)$ and $\infty$, 
        \item $\intermediatepart_1\cup\intermediatepart_2\subseteq\Bc_{37n/48}(z_n)$,
        \item and they are well-separated (see Lemma \ref{lem:rw-disc-sep}) at $\partial \Bc_{3n/4}(z_n)$.
    \end{itemize}
    For well-separated $\intermediatepart_1$ and $\intermediatepart_2$, the total mass of admissible $\omega$ is $\gtrsim n$.
\end{proof}

\begin{lemma} \label{compare}The functionals $\phi_1$ and $\phi_2$ satisfy
\begin{equation}\label{eq:738}
\widehat{p}_{0,n}^{z}[\phi_1] \simeq \widehat{p}_{0,n}^{z}[\phi_2].	    
\end{equation}
\end{lemma}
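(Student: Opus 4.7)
The plan is to exploit $\phi_1 \ge \phi_2$ (pointwise, since $\phi_2$ has the extra indicator) together with the lower bound $\wh{p}_{0,n}^{z}[\phi_2]\asymp n$ from Lemma~\ref{lem:estimate of phi_1}, which reduces the task to proving
\[
\wh{p}_{0,n}^{z}[\phi_1-\phi_2] = O(ne^{-un})
\]
for some $u>0$. Unpacking the definitions,
\[
\phi_1(\ol\eta)-\phi_2(\ol\eta)=p^{\ol\eta}\bigl[\mathbf{1}_{\{\intermediatepart_1\not\subseteq\Bc_{37n/48}(z_n)\}\cup\{\intermediatepart_2\not\subseteq\Bc_{37n/48}(z_n)\}}\bigr],
\]
so by symmetry and a union bound I only need to handle paths $\omega_*$ for which $\intermediatepart_1$ exits $\Bc_{37n/48}(z_n)$.

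For such a path I would split $\omega_*$ at the first time $\sigma=H_{\partial\Bc_{37n/48}(z_n)}(\omega_*)$ and the first subsequent return $\sigma'$ to $\partial\Bc_{3n/4}(z_n)$, and apply the strong Markov property to factor its mass into three pieces. The initial segment $\omega_*[0,\sigma]$ is a random-walk excursion between $\Sc$ and $\partial\Bc_{37n/48}(z_n)$ whose mass is controlled by standard hitting estimates, while the tail $\omega_*[\sigma',t_{\omega_*}]$, reinterpreted as an $\omega_*$-path relative to a new initial configuration $\ol\eta'=(\eta_1\oplus\omega_*[0,\sigma'],\eta_2)$ with endpoint on $\partial\Bc_{3n/4}(z_n)$, contributes at most $\max_{\ol\eta'}\phi_1(\ol\eta')\lesssim n$ by Lemma~\ref{lem:estimate of phi_1}. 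The crucial factor is the middle crossing $\omega_*[\sigma,\sigma']$: for the full configuration to still not disconnect $\Bc_{3n/4}(z_n)$ from $\infty$, it must form the inward arm of a two-arm non-disconnecting excursion on the annulus $\annulus(z_n,e^{3n/4},e^{37n/48})$ of log-width $n/48$, so by \eqref{eq:rw-disc-exp} its mass is $O(e^{-\alpha n/48})$. The well-separation at $\partial\Bc_{37n/48}(z_n)$ needed to convert the up-to-constants estimate into this sharp bound is arranged through the reverse separation lemma (Remarks~\ref{rmk:315}, \ref{rmk:329}), and the inward coupling in Theorem~\ref{thm:inward coupling} is used to align the bad pieces with a well-separated reference.

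The main obstacle will be to legitimately extract the \emph{joint} two-arm decay $e^{-\alpha n/48}$ rather than just the one-arm exponent $e^{-n/(4\cdot 48)}$: on its own, $\intermediatepart_1$ exits $\Bc_{37n/48}(z_n)$ with only polynomially small probability in $n$, and the exponential suppression comes entirely from the constraint that $\intermediatepart_2$ stay on the correct side of $\intermediatepart_1$ across the annulus. Once the separation lemmas at both scales $\partial\Bc_{3n/4}(z_n)$ and $\partial\Bc_{37n/48}(z_n)$ are invoked to justify this factorization, the remainder of the argument parallels the derivation of the sharp non-disconnection asymptotic~\eqref{sharp estimate for D_m}.
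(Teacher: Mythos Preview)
Your overall skeleton is correct: reduce to bounding $\widehat p_{0,n}^{z}[\phi_1-\phi_2]$, use the union bound to focus on $\{\intermediatepart_1\not\subseteq\Bc_{37n/48}(z_n)\}$, and decompose $\omega_*$ at the excursion out of $\Bc_{37n/48}(z_n)$. This is also what the paper does. The problem is the exponent you are chasing.

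You aim for the two-arm decay $e^{-\alpha n/48}$ and claim the exponential suppression ``comes entirely from the constraint that $\intermediatepart_2$ stay on the correct side of $\intermediatepart_1$ across the annulus.'' But $\intermediatepart_2$ is the short inward step from $\Sc=\partial\Bc_{3n/4+1}(z_n)$ to $\partial\Bc_{3n/4}(z_n)$; it need not (and typically does not) cross the annulus $\annulus(z_n,e^{3n/4},e^{37n/48})$ at all, so there is no second random arm to pair with $\intermediatepart_1$'s excursion. The outward and inward legs of $\intermediatepart_1$ itself are consecutive pieces of the \emph{same} walk, so \eqref{eq:rw-disc-exp} (which concerns two independent walks) is not the right reference. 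The separation lemmas and the inward coupling you invoke do not manufacture a missing independent arm.

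The fix is to drop the two-arm target entirely: the one-arm estimate you yourself mention, $e^{-n/(4\cdot 48)}$, is already enough, since $\simeq$ only requires \emph{some} $u>0$. Concretely, if $\intermediatepart_1\not\subseteq\Bc_{37n/48}(z_n)$, write $\intermediatepart_1$ as a long loop from $\Sc$ to $\Sc$ that reaches $\partial\Bc_{37n/48}(z_n)$, followed by a fresh path from $\Sc$ to $\partial\Bc_{3n/4}(z_n)$. The non-disconnection constraint forces this single loop not to disconnect $\Bc_{3n/4}(z_n)$ from $\infty$, and Lemma~\ref{one-arm disconnection exponent} gives an extra cost $O(e^{-un})$; the remaining piece is bounded by $\max_{\ol\eta}\phi_1(\ol\eta)$. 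This yields $\max_{\ol\eta}(\phi_1-\phi_2)=O(e^{-un})\max_{\ol\eta}\phi_1$, which combined with Lemma~\ref{lem:estimate of phi_1} gives \eqref{eq:738}. This is exactly the paper's argument, and it avoids all the machinery you proposed.
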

\begin{proof}
	It suffices to show \[
	\max_{\ol\eta}\big(\phi_1(\ol\eta)-\phi_2(\ol\eta)\big) = O(e^{-un})\max_{\ol\eta}\phi_1(\ol\eta).
	\]
	Note that
	\[
	\phi_1(\ol\eta)-\phi_2(\ol\eta) \le p^{\ol\eta}[1_{\intermediatepart_1\nsubseteq \Bc_{37n/48}(z_n)}]+p^{\ol\eta}[1_{\intermediatepart_2\nsubseteq \Bc_{37n/48}(z_n)}].
	\]
	If $\intermediatepart_1\nsubseteq \Bc_{37n/48}(z_n)$, decompose $\intermediatepart_1$ into a concatenation of a long path from $\Sc$ to $\Sc$ that reaches $\partial \Bc_{37n/48}(z_n)$ and another path from $\Sc$ to $\partial \Bc_{3n/4}(z_n)$. Using Lemma~\ref{one-arm disconnection exponent}, we get an extra cost that gives the $O(e^{-un})$ term. Summing these contributions yields the required exponential bound.
\end{proof}
\begin{lemma}\label{lem:total mass of K_3n/4}
The probability of the frontier-disk event satisfies the asymptotic equivalence:
	\begin{equation}
	    \Pb\big(K_{3n/4}(z)\big)\simeq \left \|{p}_{0,n}^{z}\right \| \mu_{0}^z[\phi_2].
	\end{equation}
\end{lemma}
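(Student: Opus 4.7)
The starting point is to combine equation \eqref{eq:716} with Lemma~\ref{compare}: they already give
\[
\Pb\big(K_{3n/4}(z)\big) \;=\; \|p_{0,n}^{z}\|\,\widehat p_{0,n}^{z}[\phi_1] \;\simeq\; \|p_{0,n}^{z}\|\,\widehat p_{0,n}^{z}[\phi_2].
\]
Consequently, matching the target asymptotic reduces to proving the single swap $\widehat p_{0,n}^{z}[\phi_2] \simeq \mu_{0}^{z}[\phi_2]$, i.e.\ that the truncated functional $\phi_2$ has the same value under the two measures up to an exponentially small multiplicative error.

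The mechanism is the coupling from Lemma~\ref{lem:coupling of measures}: construct $\ol\eta\sim \widehat p_{0,n}^{z}$ and $\ol\eta'\sim \mu_0^{z}$ on one probability space so that, on an event $\Omega^{\ast}$ with $\Pb((\Omega^{\ast})^{c})\le ce^{-un}$, we have $\ol\eta=_{37n/48}\ol\eta'$. On $\Omega^{\ast}$ the endpoints $x_1,x_2\in \Sc$ coincide and the portions of $\ol\eta,\ol\eta'$ inside $\Bc_{37n/48}(z_n)$ agree. The central claim to establish is then $\phi_2(\ol\eta)=\phi_2(\ol\eta')$ on $\Omega^{\ast}$. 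Unpacking $\phi_2(\ol\eta)=p^{\ol\eta}[1_{\zeta_1,\zeta_2\subset\Bc_{37n/48}(z_n)}]$, the integrand is a function of paths $\omega_{\ast}$ from $x_1$ to $x_2$ in $\Bc_n$ subject to (i) $\omega_{\ast}\cap \Bc_{3n/4}(z_n)\neq\emptyset$, (ii) $\omega\subset \Bc_{5n/6}(z_n)$, (iii) $\zeta_1,\zeta_2\subset\Bc_{37n/48}(z_n)$, and (iv) non-disconnection of $\Bc_{3n/4}(z_n)$ from $\infty$ by $\bigcup_{i}(\eta_i\cup \zeta_i)$. Conditions (i)--(iii) depend only on $\omega_{\ast}$ and the common endpoints, while for (iv), once $\zeta_i\subset\Bc_{37n/48}(z_n)$ and the ambient non-disconnection of $\Sc$ from $\infty$ carried by $\ol\eta$ (and $\ol\eta'$) are imposed, any disconnecting loop around $\Bc_{3n/4}(z_n)$ must be built out of arcs of $\zeta_i$ together with pieces of $\eta_i$ that lie inside $\Bc_{37n/48}(z_n)$; the outer parts of $\eta_i$, which on their own cannot disconnect $\Sc$ from $\infty$, cannot contribute. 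Thus event (iv) is measurable with respect to the data shared between $\ol\eta$ and $\ol\eta'$ on $\Omega^{\ast}$, and the two values of $\phi_2$ agree there.

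To close the argument, combine this with the $L^{\infty}$ bound $\phi_2\le \phi_1\lesssim n$ of Lemma~\ref{lem:estimate of phi_1}. Since the two laws $\widehat p_{0,n}^{z}$ and $\mu_0^{z}$ of $\ol\eta,\ol\eta'$ are coupled to coincide on $\Omega^{\ast}$ in the sense relevant for $\phi_2$, we get
\[
\bigl|\widehat p_{0,n}^{z}[\phi_2]-\mu_0^{z}[\phi_2]\bigr| \;\le\; 2(\max \phi_2)\,\Pb((\Omega^{\ast})^{c}) \;\lesssim\; n e^{-un}.
\]
Combining this with $\widehat p_{0,n}^{z}[\phi_2]\asymp n$ from Lemma~\ref{lem:estimate of phi_1} forces $\mu_0^{z}[\phi_2]\asymp n$ and then gives $\widehat p_{0,n}^{z}[\phi_2]\simeq \mu_0^{z}[\phi_2]$. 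Multiplying by $\|p_{0,n}^{z}\|$ recovers the statement of the lemma.

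The main obstacle is the topological verification underlying the central claim: one must argue cleanly that, under the restriction $\zeta_i\subset\Bc_{37n/48}(z_n)$ together with the prior non-disconnection conditions built into $p_{0,n}^{z}$ and $\mu_0^{z}$, the disconnection condition in (iv) is genuinely a function of the restriction of $\ol\eta$ to $\Bc_{37n/48}(z_n)$ and of $\zeta_i$. The explicit scale gap between $\Bc_{3n/4}(z_n)$, $\Sc=\partial \Bc_{3n/4+1}(z_n)$, and $\Bc_{37n/48}(z_n)$ is introduced precisely to give enough room (via Lemma~\ref{one-arm disconnection exponent} in the spirit of Lemma~\ref{compare}) for this localization to go through, and also to guarantee that the inward coupling of Lemma~\ref{lem:coupling of measures} produces the required matching of inner data.
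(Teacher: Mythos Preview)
Your proposal is correct and follows essentially the same approach as the paper. The paper's proof is organized identically: it first observes that $\phi_2(\ol\eta)$ depends only on the part of $\ol\eta$ inside $\Bc_{37n/48}(z_n)$, then uses the coupling of Lemma~\ref{lem:coupling of measures} together with the bound $\max\phi_2\le\max\phi_1\asymp n$ from Lemma~\ref{lem:estimate of phi_1} to obtain $|\widehat p_{0,n}^z[\phi_2]-\mu_0^z[\phi_2]|\lesssim e^{-un}\max\phi_1$, and combines this with $\widehat p_{0,n}^z[\phi_2]\asymp n$ and \eqref{eq:716}, \eqref{eq:738} to conclude. The only difference is that the paper states the locality of $\phi_2$ as a one-line observation, whereas you (sensibly) flag it as the main topological point needing care.
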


\begin{proof}
	Observe that $\phi_2(\ol\eta)$ depends exclusively on the configuration of $\ol\eta$ within $\Bc_{37n/48}(z_n)$. By Lemma~\ref{lem:coupling of measures} (the coupling result), the discrepancy between the expectations under $\widehat{p}_{0,n}^z$ and $\mu_0^z$ is exponentially bounded:
	\begin{equation}\label{eq:761-0}
		|\widehat{p}_{0,n}^z[\phi_2]-\mu_0^z[\phi_2]|\lesssim e^{-un}\max_{\ol\eta}\phi_2(\ol\eta) \le e^{-un}\max_{\ol\eta}\phi_1(\ol\eta).
	\end{equation}
	Combining this bound with Lemma~\ref{lem:estimate of phi_1} gives that
	$\widehat{p}_{0,n}^z[\phi_2]\simeq\mu_0^z[\phi_2].$ Thus, we deduce that
	\[
	\Pb\big(K_{3n/4}(z)\big)\overset{\eqref{eq:716}}{=} \left\| p_{0,n}^{z} \right\|  \widehat{p}_{0,n}^{z}[\phi_1] \overset{\eqref{eq:738}}{\simeq}  \left\| p_{0,n}^{z} \right\|  \widehat{p}_{0,n}^{z}[\phi_2]\overset{\eqref{eq:761-0}}{\simeq} \left \|{p}_{0,n}^{z}\right \| \mu_{0}^z[\phi_2].
	\]This completes the proof. 
\end{proof}

We now consider admissible paths $w_*$ satisfying the frontier-point event $\Ac_n(z)$. Let $\ol\eta=(\eta_1,\eta_2)\in\Gamma_{0,\Sc}^U \times \Gamma_{\partial \Bc_n,S}^U$ denote a pair of non-disconnecting paths terminating at $x_1,x_2$ on $\Sc$. Define $q^{\ol\eta}$ as the measure $\nu_{x_1,x_2}^{\partial \Bc_n}$ restricted to paths $\omega_*$ satisfying: 
\begin{itemize}
	\item $z_n \in \fr(\eta_1\cup\omega_*\cup\eta_2);$
	\item $\omega_*$ decomposes as $\intermediatepart_1 \oplus \omega \oplus \intermediatepart_2$ according to its first and last visits to $\partial \Bc_{3n/4}(z_n)$.
\end{itemize}
Define the functionals:
\begin{equation}
\psi_1(\ol\eta):=\left\| q^{\ol\eta} \right\|, \,\,\mathrm{and}\,\,\psi_2(\ol\eta):= q^{\ol\eta}[1_{\intermediatepart_1,\intermediatepart_2\subseteq\Bc_{37n/48}(z_n)}].
\end{equation}
The probability of the frontier-point event is expressed as:
\[
\Pb\big(\Ac_{n}(z)\big) = p_{0,n}^z[\psi_1] = \left\| p_{0,n}^z \right\| \widehat{p}_{0,n}^z[\psi_1].
\]
\begin{lemma}The functionals $\psi_1$ and $\psi_2$ satisfy
	\[
	\max_{\ol\eta}\psi_1(\ol\eta)\asymp \widehat{p}_{0,n}^{z}[\psi_2] \asymp e^{-3\alpha n/4}.
	\]
	where the maximum is taken over all pairs $\ol\eta=(\eta_1,\eta_2) \in \Gamma_{0,\Sc}^U\times\Gamma_{\partial \Bc_n,\Sc}^U$ such that $\eta_1\cup\eta_2$ does not disconnect $\Sc$ from $\infty$.
\end{lemma}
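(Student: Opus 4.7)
The argument closely mirrors that of Lemma~\ref{lem:estimate of phi_1}, the only new feature being the two-arm non-disconnection constraint at all scales from $O(1)$ out to $e^{3n/4}$ around $z_n$, which accounts for the factor $e^{-3\alpha n/4}$. The plan is to first establish the upper bound $\max_{\ol\eta}\psi_1(\ol\eta) \lesssim e^{-3\alpha n/4}$, then use the trivial sandwich $\widehat{p}_{0,n}^{z}[\psi_2] \le \widehat{p}_{0,n}^{z}[\psi_1] \le \max_{\ol\eta}\psi_1(\ol\eta)$, and finally prove the matching lower bound $\widehat{p}_{0,n}^{z}[\psi_2] \gtrsim e^{-3\alpha n/4}$.

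For the upper bound, fix any admissible $\ol\eta$ with endpoints $x_1,x_2\in\Sc$. Any $\omega_*$ contributing to $\psi_1(\ol\eta)$ admits the decomposition $\omega_*=\intermediatepart_1\oplus\omega\oplus\intermediatepart_2$ at its first and last visits to $\partial\Bc_{3n/4}(z_n)$. The inner piece $\omega$ must reach $z_n$, and for $z_n\in\fr(\eta_1\cup\omega_*\cup\eta_2)$ the two sub-arms of $\omega$ splitting at $z_n$ must, together with the rest of the path, avoid disconnecting $z_n$ from $\partial\Bc_{3n/4}(z_n)$. Summing $\nu_{x_1,x_2}^{\Bc_n}$ over entry/exit configurations on $\partial\Bc_{3n/4}(z_n)$, the annular part $(\intermediatepart_1,\intermediatepart_2)$ contributes $O(1)$ mass after paying for the outer non-disconnection, while the inner two-arm contribution is bounded by $\asymp e^{-3\alpha n/4}$ via the discrete non-disconnection estimate~\eqref{eq:rw-disc-exp}. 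For the lower bound, I invoke the reverse separation lemma for simple random walks (Remarks~\ref{rmk:315} and~\ref{rmk:329}): with probability $\gtrsim 1$ under $\widehat{p}_{0,n}^{z}$, the pair $\ol\eta$ is well-separated at $\Sc$. Conditional on this, I would attach $\intermediatepart_1,\intermediatepart_2\subseteq\Bc_{37n/48}(z_n)\setminus\Bc_{3n/4}(z_n)$ reaching $\partial\Bc_{3n/4}(z_n)$ at well-separated points while preserving non-disconnection — a step with positive mass — and then attach $\omega\subseteq\Bc_{3n/4}(z_n)$ realizing a two-arm configuration between those entry points that passes through $z_n$ and leaves $z_n$ on the global frontier, with mass $\gtrsim e^{-3\alpha n/4}$ by the sharp estimate~\eqref{sharp estimate for D_m}. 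Since the constructed $\intermediatepart_1,\intermediatepart_2$ automatically satisfy the indicator in the definition of $\psi_2$, this yields $\psi_2(\ol\eta)\gtrsim e^{-3\alpha n/4}$ on an event of positive $\widehat{p}_{0,n}^{z}$-measure.

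The main technical obstacle is stitching three multi-scale non-disconnection events — the outer configuration $\eta_1\cup\eta_2$ near $\Sc$, the annular piece $\intermediatepart_1\cup\intermediatepart_2$ between scales $3n/4$ and $37n/48$, and the two-arm event around $z_n$ inside $\Bc_{3n/4}(z_n)$ — into a single global event that places $z_n$ on the frontier of the whole path. This is handled by invoking Lemma~\ref{lem:rw-disc-sep} (and its inward counterpart from Remarks~\ref{rmk:315}, \ref{rmk:329}) at each of the scales $0$, $3n/4$, and $n$, which decouples the scales multiplicatively and allows the contributions to be combined without losing more than a constant factor.
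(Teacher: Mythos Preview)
Your proposal is correct and follows essentially the same approach as the paper: a path decomposition that isolates the two-arm non-disconnection event from $z_n$ out to $\partial\Bc_{3n/4}(z_n)$ (cost $\asymp e^{-3\alpha n/4}$ via \eqref{eq:rw-disc-exp} and \eqref{sharp estimate for D_m}) together with $O(1)$ connectors between $\partial\Bc_{3n/4}(z_n)$ and $\Sc$, combined with the separation lemmas (Lemma~\ref{lem:rw-disc-sep} and Remarks~\ref{rmk:315}, \ref{rmk:329}) to glue the scales in the lower bound. The paper's proof is terser and does not spell out the reverse separation for $\ol\eta$ at $\Sc$ (leaning on the analogous Lemma~\ref{lem:estimate of phi_1}), whereas you make that step explicit; conversely, your phrase ``after paying for the outer non-disconnection'' is slightly misleading since no additional non-disconnection cost is incurred by $(\intermediatepart_1,\intermediatepart_2)$ in the upper bound---their mass is simply $O(1)$ by Green's function estimates in the thin annulus between $\partial\Bc_{3n/4}(z_n)$ and $\Sc$.
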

\begin{proof}
	We construct admissible paths $\omega_*$ in the following manner:
	\begin{itemize}
		\item Let $(\xi_1,\xi_2)$ be a pair of non-disconnecting paths from $z_n$ to its first hitting of $\partial \Bc_{3n/4}(z_n)$ with endpoints $(y_1,y_2)$. By \eqref{sharp estimate for D_m}, the total mass of $(\xi_1,\xi_2)$ is $\asymp e^{-3\alpha n/4}$.
		\item Let $\intermediatepart_i$ be sampled from $\nu_{x_i,y_i}^{\Bc_n}$ such that the combined path $(\eta_1\cup\intermediatepart_1\cup\xi_1)\cup(\xi_2\cup\intermediatepart_2\cup\eta_2)$ does not disconnect $z_n$ from $\infty$. By confining $\zeta_i$ in a well-chosen tube, the total mass of $(\intermediatepart_1,\intermediatepart_2)$ is $O(1)$.
	\end{itemize}
	To establish the lower bound, it suffices to show $\widehat{p}_{0,n}^{z}[\psi_2] \gtrsim e^{-3\alpha n/4}$. By the separation lemma, the total mass of well-separated (see Lemma \ref{lem:rw-disc-sep}) $\ol\xi$ is $\gtrsim e^{-3\alpha n/4}$. Then the total mass of $\ol\intermediatepart$ we can attach is $\gtrsim 1$. Thus we conclude the proof.
\end{proof}

By using the same argument of Lemmas~\ref{compare} and \ref{lem:total mass of K_3n/4}, we are able to compare $\psi_1$ and $\psi_2$. Thus we get the following lemma. We omit the proof.
\begin{lemma}\label{lem:total mass of Ac}We have
	\begin{equation}
	    \widehat{p}_{0,n}^{z}[\psi_1] \simeq \widehat{p}_{0,n}^{z}[\psi_2]\ \ \mathrm{and}\ \ \Pb\big(\Ac_n(z)\big)\simeq\left \| p_{0,n}^z \right \| \mu_{0}^z[\psi_2].
	\end{equation}
\end{lemma}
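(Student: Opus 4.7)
The plan is to mirror the strategy of Lemmas~\ref{compare} and \ref{lem:total mass of K_3n/4}, replacing the frontier-disk functionals $\phi_1,\phi_2$ with the frontier-point functionals $\psi_1,\psi_2$. First, I would reduce the comparison $\widehat{p}_{0,n}^{z}[\psi_1] \simeq \widehat{p}_{0,n}^{z}[\psi_2]$ to the uniform bound
\[
\max_{\ol\eta}\big(\psi_1(\ol\eta)-\psi_2(\ol\eta)\big) = O(e^{-un})\max_{\ol\eta}\psi_1(\ol\eta).
\]
Since $\psi_1-\psi_2 \le q^{\ol\eta}[1_{\intermediatepart_1\nsubseteq \Bc_{37n/48}(z_n)}] + q^{\ol\eta}[1_{\intermediatepart_2\nsubseteq \Bc_{37n/48}(z_n)}]$, I would handle, say, the first term by decomposing the offending piece of $\intermediatepart_1$ into an excursion from $\Sc=\partial\Bc_{3n/4+1}(z_n)$ that crosses $\partial \Bc_{37n/48}(z_n)$ before returning to $\Sc$, concatenated with a standard crossing from $\Sc$ to $\partial\Bc_{3n/4}(z_n)$. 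The long excursion must still be compatible with the non-disconnection-around-$z_n$ constraint that defines $q^{\ol\eta}$, so Lemma~\ref{one-arm disconnection exponent} furnishes the extra factor $O(e^{-un})$, while the rest of the $\omega_*$-mass is bounded by the maximum of $\psi_1$.

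Next, I would transfer the estimate from $\widehat{p}_{0,n}^z$ to the reference measure $\mu_0^z$. By the coupling in Lemma~\ref{lem:coupling of measures}, one can realise $\ol\eta\sim\widehat{p}_{0,n}^z$ and $\ol\eta'\sim\mu_0^z$ on the same space with probability $\ge 1-ce^{-un}$ of agreeing on the portions of the paths inside $\Bc_{37n/48}(z_n)$. Because $\psi_2(\ol\eta)$ restricts $\zeta_1,\zeta_2\subseteq\Bc_{37n/48}(z_n)$, the non-disconnection-around-$z_n$ condition defining $q^{\ol\eta}$ depends on $\ol\eta$ only through its trace inside $\Bc_{37n/48}(z_n)$: indeed, any loop around $z_n$ formed by $\eta_1\cup\zeta_1\cup\zeta_2\cup\eta_2$ can be replaced, at no cost, by a loop using only the inner portions of $\eta_1,\eta_2$. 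Hence $\psi_2$ is effectively a function of the coupled inner traces, and
\[
\big|\widehat{p}_{0,n}^z[\psi_2]-\mu_0^z[\psi_2]\big|\lesssim e^{-un}\max_{\ol\eta}\psi_1(\ol\eta)\asymp e^{-un}e^{-3\alpha n/4},
\]
which, together with $\widehat{p}_{0,n}^z[\psi_2]\asymp e^{-3\alpha n/4}$ from the previous lemma, yields $\widehat{p}_{0,n}^z[\psi_2]\simeq\mu_0^z[\psi_2]$.

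Finally, chaining these asymptotic equivalences,
\[
\Pb\big(\Ac_n(z)\big)=\left\|p_{0,n}^z\right\|\widehat{p}_{0,n}^z[\psi_1]\simeq \left\|p_{0,n}^z\right\|\widehat{p}_{0,n}^z[\psi_2]\simeq \left\|p_{0,n}^z\right\|\mu_0^z[\psi_2],
\]
as required. The main obstacle is the localisation argument in the second step: the frontier-point event is a priori global, so one has to verify carefully that, once $\zeta_1,\zeta_2$ are confined to $\Bc_{37n/48}(z_n)$, the non-disconnection of $z_n$ from $\infty$ truly depends on $\ol\eta$ only inside $\Bc_{37n/48}(z_n)$. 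Handling this cleanly — and controlling the minor contribution of the rare coupling failure event via Lemma~\ref{one-arm disconnection exponent} to absorb it into the $O(e^{-un})$ error — is the delicate point of the proof; the remaining steps are structurally identical to those in Lemmas~\ref{compare} and \ref{lem:total mass of K_3n/4}.
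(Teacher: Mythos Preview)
Your proposal is correct and follows exactly the approach the paper takes: the paper omits the proof entirely, stating only that it proceeds by the same argument as Lemmas~\ref{compare} and~\ref{lem:total mass of K_3n/4}, which is precisely the strategy you outline. The localisation subtlety you flag is genuine and is in fact slightly more delicate here than for $\phi_2$ (since the intermediate piece $\omega$ is not a priori confined to $\Bc_{5n/6}(z_n)$), but it is handled by the same one-arm estimates you already invoke.
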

We now turn to the proofs of Propositions \ref{thm:one-point disk and point} and \ref{thm:two-point disk and point}.
\begin{proof}[Proof of Proposition~\ref{thm:one-point disk and point}]
	Note that both $\mu_0^z[\phi_2]$ and $\mu_0^z[\psi_2]$ are translation invariant in $z$, thus
	\begin{equation}\label{eq:f(n)}
		f(n):=\mu_0^z[\phi_2] / \mu_0^z[\psi_2] \asymp ne^{3\alpha n/4}
	\end{equation}
	only depends on $n$. Then Proposition~\ref{thm:one-point disk and point} follows from Lemmas~\ref{lem:total mass of K_3n/4} and \ref{lem:total mass of Ac} directly.
\end{proof}
\begin{proof}[Proof of Proposition \ref{thm:two-point disk and point}]
The event $K_{3n/4}(z)\cap\Ac_n(w)$ can be decomposed into three sub-events:
	\begin{itemize}
		\item $E_1$: The frontier-disk event $K_{3n/4}(z)$ happens before $\Ac_n(w)$ and the SRW $S$ no longer returns within a vicinity of $z$ after making a frontier point at $w$.
		\item $E_2$: The frontier-disk event $K_{3n/4}(z)$ happens after $\Ac_n(w)$ and the SRW $S$ never enters the vicinity of $z$ before making a frontier point at $w$.
		\item $E_3$: The SRW $S$ visits the vicinity of $z$ at least twice and it makes a frontier point at $w$ between two visits and moreover forms a frontier disk around $z$.
	\end{itemize}
    We decompose the event $\Ac_n(z)\cap\Ac_n(w)$ similarly. Following the proof of Proposition \ref{thm:one-point disk and point}, one can match the first two sub-events $E_1,E_2$ with their corresponding sub-events decomposed from $\Ac_n(z)\cap\Ac_n(w)$. 
    We now analyze the third sub-event $E_3$. Due to an extra backtracking of the SRW path, there is an extra exponential penalty in probability by Lemma \ref{one-arm disconnection exponent}. Thus, the probability of $E_3$ is negligible compared with $\Pb(E_1)$ and $\Pb(E_2)$. We finish the proof. 
\end{proof}

\section{Convergence of occupation measure}\label{sec5:Convergence of occupation measure}
The goal of this section is to prove Theorem \ref{thm:main result 0}. In Section~\ref{sec5.1:Comparison between the continuum and discrete}, we employ the Skorokhod embedding theorem to demonstrate asymptotic equivalence between the first and second moments of continuous and discrete frontier-disk events. This equivalence establishes a rigorous correspondence between the discrete occupation measure $\nu_n(V)$ and the frontier-disk measure $\widetilde{\nu}_s(V)$. 
In Section~\ref{sec5.2:Proof of main theorem}, we finalize the proof of convergence of occupation measure. While the full argument parallels the methodology of \cite{GLPS23}, we only provide a proof sketch.
\subsection{Comparison between the continuum and discrete frontier disks}\label{sec5.1:Comparison between the continuum and discrete}

In this subsection, we construct a coupling between the random walk $S[0,\tau_{n+1}]$ and Brownian motion $W[0,T_{n+1}]$ on a common probability space using the Skorokhod embedding theorem (as specified in \eqref{eq:se01}) with parameter $\eps=1/8$. This coupling ensures
\begin{equation}\label{eq:se0}
	\Pb (H^c)  =O( e^{-10n} )\    \mathrm{with} \ 
	H:=\big\{\max_{0\le t\le \tau_{n+1}\vee T_{n+1}} |S_t-W_t |\le e^{5n/8}\big\}.
\end{equation}
Under this coupling, the discrete frontier-disk event $K_{3n/4}(z)$ can be approximated by its continuous counterpart $\widetilde{K}_{3n/4}^{(n)}(z)$, where $\widetilde{K}_{3n/4}^{(n)}(z)$ corresponds to the event $\widetilde{K}_{n/4}(z)$ scaled spatially by $e^n$. By Lemma \ref{lem:518} and the scaling invariance property of Brownian motion, we obtain the following estimate.

\begin{coro}
\label{lem:asymp}
	For all $z\in \Dc$ with $\dist(0,z,\partial \Dc)\geq e^{-n/6}$, we have
	\begin{equation}
		\Pb\big(K_{3n/4}(z)\big) \asymp \Pb\big(\widetilde{K}_{3n/4}^{(n)}(z)\big) \asymp a(z)e^{-\alpha n/4}n.
	\end{equation}
\end{coro}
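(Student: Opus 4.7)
The plan is to split the two asymptotic equivalences. The second one,
$\Pb(\widetilde K_{3n/4}^{(n)}(z))\asymp a(z)e^{-\alpha n/4}n$, is a purely continuum statement that follows from Lemma~\ref{lem:518} together with the scaling invariance of Brownian motion. The first one, $\Pb(K_{3n/4}(z))\asymp\Pb(\widetilde K_{3n/4}^{(n)}(z))$, is the discrete-to-continuum transfer and will be handled by the Skorokhod coupling recorded in \eqref{eq:se0}.

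For the continuum step, scaling Brownian motion by $e^{-n}$ identifies $\widetilde K_{3n/4}^{(n)}(z)$ with $\widetilde K_{n/4}(z)$, and the hypothesis $d_z\ge e^{-n/6}$ is exactly the input $d_z\ge e^{-2(n/4)/3}$ required by Theorem~\ref{prop:green function and frontier disk 1}. Unpacking the definition \eqref{eq:defLsz} of $L_s(z)$ together with that theorem, one gets
$\Pb(\widetilde K_{n/4}(z))=c'^{-1}(n/4)e^{-\alpha n/4}\,\Eb[L_{n/4}(z)]\asymp n\,e^{-\alpha n/4}\,G_{\mathcal{D}}^{\fr}(z)$, and Lemma~\ref{lem:518} then replaces $G_{\mathcal{D}}^{\fr}(z)$ by $a(z)$, yielding the second $\asymp$.

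For the discrete-to-continuum step, I will work on the good event $H$ of \eqref{eq:se0}, on which $|S_t-W_t|\le e^{5n/8}$ uniformly; note that this deviation is of relative size $e^{-n/8}$ against both the disk radius $e^{3n/4}$ and the confinement radius $e^{5n/6}$. On $H$, every geometric condition defining $K_{3n/4}(z)$ (hitting $\Bc_{3n/4}(z_n)$, non-disconnection of $\partial\Bc_{3n/4}(z_n)$ from $\infty$, and confinement of the intermediate segment in $\Bc_{5n/6}(z_n)$) translates to a version of the continuum event in which the relevant disks have been inflated or deflated by an additive amount $O(e^{5n/8})$. Writing $\widetilde K^\pm$ for the resulting thickened and thinned variants, we get $\widetilde K^-\subseteq K_{3n/4}(z)\subseteq\widetilde K^+$ on $H$, while the complementary contribution $\Pb(H^c)=O(e^{-10n})$ is negligible compared to the target order $a(z)e^{-\alpha n/4}n$. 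It remains to verify $\Pb(\widetilde K^\pm)\asymp\Pb(\widetilde K_{3n/4}^{(n)}(z))$, which follows by repeating the path-decomposition argument from the proof of Lemma~\ref{lem:518} at the slightly perturbed radii: a relative change of order $e^{-n/8}$ in the radii does not alter the excursion-mass constants, thanks to the stability afforded by the separation lemmas~\ref{lem:separation lemma} and \ref{lem:rw-disc-sep} together with their inward counterparts from Remarks~\ref{rmk:315} and \ref{rmk:329}.

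The main technical delicacy is precisely this robustness check: one must rule out configurations in which the two non-disconnecting arms of $S$ come within $e^{5n/8}$ of each other near $\partial\Bc_{3n/4}(z_n)$, since the Brownian perturbation could then either pinch the disk off from infinity or prevent it from being reached. The (reverse) separation lemmas confine us to well-separated arms at $\partial\Bc_{3n/4}(z_n)$ with positive conditional probability, and this is exactly what makes the sandwich stable under the $e^{5n/8}$-perturbation.
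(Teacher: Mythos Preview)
Your continuum half is fine and matches the paper: by scaling, $\widetilde K_{3n/4}^{(n)}(z)$ is $\widetilde K_{n/4}(z)$, and the proof of Lemma~\ref{lem:518} already computes $\Pb(\widetilde K_s(z))\asymp s\,e^{-\alpha s}a(z)$.

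For the discrete half, the paper does \emph{not} use the Skorokhod coupling at all. There are two quick routes, both entirely discrete. Either rerun the path decomposition from the proof of Lemma~\ref{lem:518} verbatim for the simple random walk: every ingredient there (the two-arm mass \eqref{eq: total mass of non-disconnection excursions}, the $\asymp n$ Green's function mass for the intermediate segment, and the one-arm estimate from Lemma~\ref{one-arm disconnection exponent}) has a discrete analogue of the same order in Section~\ref{sec2.2:prelim}. Or simply chain the results already in hand from Section~\ref{sec4:Moment estimates of frontier points and frontier disks}: Theorem~\ref{thm:one-point1}(i) gives $\Pb(\Ac_n(z))\asymp e^{-\alpha n}a(z)$, Proposition~\ref{thm:one-point disk and point} gives $\Pb(K_{3n/4}(z))\simeq f(n)\Pb(\Ac_n(z))$, and \eqref{eq:f(n)} gives $f(n)\asymp n e^{3\alpha n/4}$.

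Your coupling route has a real gap. The sandwich $\widetilde K^-\subseteq K_{3n/4}(z)\subseteq\widetilde K^+$ on $H$ is false as stated, because non-disconnection is not monotone under an $e^{5n/8}$-perturbation of the path: if the SRW arms $\lambda_1,\lambda_2$ do not disconnect but leave an opening of width $<2e^{5n/8}$ at \emph{some} scale between $e^{3n/4}$ and $d_z e^n$, their Brownian companions can disconnect, and conversely. This danger is not localized near $\partial\Bc_{3n/4}(z_n)$; it can occur at any intermediate scale. The separation lemmas you invoke only guarantee good separation at the terminal radius of a non-disconnection event, not at all intermediate radii simultaneously. Controlling narrow passages at every scale is precisely the content of Proposition~\ref{prop:Dnf} (the $\mathrm{NICE}$ configurations), and once that is in hand you have essentially redone the analysis behind Lemma~\ref{lem:compare K with tildeK}---which the paper proves \emph{using} the present corollary, so your argument would become circular. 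For the mere up-to-constants statement here, the direct discrete path decomposition is both shorter and avoids this difficulty entirely.
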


The next lemma shows that the probability of the discrete frontier-disk event $K_{3n/4}(z)$ is asymptotically equivalent to the probability of the continuous counterpart $\widetilde{K}_{3n/4}^{(n)}(z)$. The proof strategy involves showing that the symmetric difference between the two events has negligible probability.
\begin{lemma} \label{lem:compare K with tildeK}
	For all $z\in \Dc$ with $\dist(0,z,\partial \Dc)\geq e^{-n/6}$, we have
	\begin{equation}
	    \Pb\big(K_{3n/4}(z)\big) \simeq \Pb\big(\widetilde{K}_{3n/4}^{(n)}(z)\big).
	\end{equation}
\end{lemma}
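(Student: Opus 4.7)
The plan is to deploy the Skorokhod coupling \eqref{eq:se0}, which on the good event $H$ of probability $1 - O(e^{-10n})$ enforces $|S_t - W_t| \le e^{5n/8}$. Since this coupling error is exponentially smaller than both the inner disk radius $e^{3n/4}$ and the outer containment radius $e^{5n/6}$, the defining conditions of $K_{3n/4}(z)$ and $\widetilde K_{3n/4}^{(n)}(z)$ can only disagree on ``near-boundary'' configurations. By Corollary \ref{lem:asymp} combined with Lemma \ref{lem:518}, one has $\Pb(K_{3n/4}(z)) \asymp a(z) e^{-\alpha n/4} n \gtrsim e^{-2n/9} n$ throughout the range $d_z \ge e^{-n/6}$, so the contribution of $H^c$ is absorbed in the $\simeq$; it remains to bound $\Pb\bigl((K_{3n/4}(z) \triangle \widetilde K_{3n/4}^{(n)}(z)) \cap H\bigr)$ by $O(e^{-un}) \Pb(K_{3n/4}(z))$.

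On $H$, the symmetric difference decomposes according to which of the three defining conditions of the frontier-disk event disagrees. Most crucially, if the ``intermediate part stays in the outer disk'' condition holds for one path but fails for the other, then on $H$ the corresponding intermediate part must reach the thin annular shell of radial width $e^{5n/8}$ just inside $\partial \Dc_{5n/6}(z_n)$. Viewing that part as an excursion of a 2D walk/Brownian motion in the annulus $\{e^{3n/4} \le |x - z_n| \le e^{5n/6}\}$, the probability that its maximal modulus lies in a shell of log-width $\asymp e^{-5n/24}$ contributes exponentially less mass than the total $\asymp n$ mass of such excursions, producing the desired factor $e^{-un}$. The two other cases are analogous: disagreement on hitting $\Dc_{3n/4}(z_n)$ forces a path to skim within $e^{5n/8}$ of $\partial \Dc_{3n/4}(z_n)$, while disagreement on the non-disconnection of the first and last legs forces a ``marginal disconnection'' within an $e^{5n/8}$ window. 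Both carry an analogous exponential penalty via the one-arm estimate \eqref{eq:one-arm event}.

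The main obstacle is to combine these thin-shell penalties with the global non-disconnection structure of $K_{3n/4}(z)$ without losing the sharp $e^{-un}$ decay. The approach is to decompose the path at its first (or last) visit to the critical sphere producing the discrepancy, apply the separation lemmas (Lemmas \ref{lem:separation lemma} and \ref{lem:rw-disc-sep}, together with the reverse versions from Remarks \ref{rmk:315} and \ref{rmk:329}) to make the endpoints of the two legs well-separated on that sphere, and then bound the near-boundary excursion by an independent annulus-crossing argument based on Lemma \ref{one-arm disconnection exponent}. Since after separation the near-boundary excursion becomes essentially decoupled from the remaining frontier-disk configuration, summing the three contributions yields $|\Pb(K_{3n/4}(z)) - \Pb(\widetilde K_{3n/4}^{(n)}(z))| = O(e^{-un}) \Pb(K_{3n/4}(z))$, which is precisely the $\simeq$ relation.
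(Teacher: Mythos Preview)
Your approach is the paper's: couple via Skorokhod \eqref{eq:se0}, absorb $H^c$, and on $H$ show that any discrepancy forces a thin-shell or marginal configuration with an extra $e^{-un}$ cost. But your three-case decomposition has a gap. The events $K_{3n/4}(z)$ and $\widetilde K^{(n)}_{3n/4}(z)$ are defined on $S[0,\tau_n]$ and $W[0,T_n]$ respectively, and on $H$ the terminal times $\tau_n$ and $T_n$ need not agree. In particular the last leg $\theta_2$ of $W$ contains the overshoot $W[\tau_n\wedge T_n,\,\tau_n\vee T_n]$, which under $H$ is within $e^{5n/8}$ of $S[\tau_n\wedge T_n,\,\tau_n\vee T_n]$ but \emph{not} of any part of $\lambda_1\cup\lambda_2\subseteq S[0,\tau_n]$. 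Hence your ``marginal disconnection'' case, which only contemplates the $e^{5n/8}$-sausage of $\lambda_1\cup\lambda_2$, does not cover the scenario in which this overshoot returns toward $z_n$ and causes the disconnection for $W$. The paper isolates this as a fourth bad event $F_1=\{\,S[\tau_n,\tau_{\partial B(0,e^n+e^{5n/8})}]\text{ exits }\Bc_{11n/16}(S(\tau_n))\,\}$ and kills it by gambler's ruin; on $F_1^c\cap H$ the overshoot stays within $2e^{11n/16}$ of $\partial\Dc_n$, hence (since $d_z\ge e^{-n/6}$) at distance at least $e^{5n/6}-2e^{11n/16}$ from $z_n$, and cannot interfere with the frontier-disk structure there.

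Two smaller points. The marginal-disconnection case (the paper's $F_3$) is not a bare one-arm estimate; it is exactly Proposition~\ref{prop:Dnf}, whose proof in Appendix~\ref{appen:A} needs a several-case analysis combining one-arm and two-arm costs, so you should invoke that proposition rather than \eqref{eq:one-arm event} directly. And the paper does not use separation lemmas here at all: once the path is cut at the relevant sphere, the $e^{-un}$ penalty from each $F_i$ (gambler's ruin for $F_1,F_4$, Green's function for $F_2$, Proposition~\ref{prop:Dnf} for $F_3$) multiplies straight against the remaining frontier-disk mass $\asymp a(z)e^{-\alpha n/4}n$, so the decoupling-via-separation you propose is an unnecessary detour.
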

\begin{proof}
	By Corollary~\ref{lem:asymp}, it suffices to bound the symmetric difference under the coupling \eqref{eq:se0}, i.e. for some $u>0$, we have
	\[
	\Pb\big(K_{3n/4}(z) \bigtriangleup \widetilde{K}_{3n/4}^{(n)}(z)\big) \lesssim a(z)e^{-\alpha n/4}e^{-un}.
	\]
	By symmetry, we only analyze $E_0=K_{3n/4}(z) \setminus \widetilde{K}_{3n/4}^{(n)}(z)$. Let $B$ (resp.\ $B_{\pm}$) denote discrete disks of radius $e^{3n/4}$ (resp. $e^{3n/4}\pm 2e^{5n/8}$) around $z_n$. Decompose the path $\lambda=S[0,\tau_n]$ as $\lambda_1\oplus\omega\oplus\lambda_2^R$ (resp. $\lambda_1^{\pm}\oplus\omega^{\pm}\oplus[\lambda_2^{\pm}]^R$) according to its first and last visits to $B$ (resp. $B_{\pm}$). Recall the event $H$ from \eqref{eq:se0}. On the event $H\cap E_0$, at least one of the following four events occurs:
	\begin{itemize}
		\item $F_1$: $\lambda[\tau_n, \tau_{\partial B(e^n+e^{5n/8})}]\cap \Bc_{11n/16}(\lambda(\tau_n))^c\neq\emptyset$.
		\item $F_2$:  $\omega\cap B_{-}=\emptyset$.
		\item $F_3$: $\lambda_1\cup\lambda_2$ does not disconnect $z_n$ from $\infty$ but the $e^{5n/8}$-sausage $\sausage(\lambda_1\cup\lambda_2,e^{5n/8})$ does.
		\item $F_4$: $\omega^+\nsubseteq B(z_n,e^{5n/6}-2e^{5n/8})$.
	\end{itemize}
	Then we have $\Pb(H\cap E_0) \leq \sum_{i=1}^{4} \Pb(E_0\cap F_i) $. It suffices to show that
	\begin{equation}\label{eq:F_i}
		\Pb(E_0\cap F_i) \lesssim a(z)e^{-\alpha n/4}e^{-un},\,i=1,2,3,4.
	\end{equation}
	By gambler’s ruin estimate (see e.g.\ \cite[Theorem 3.18]{MP10}), the above estimate holds for $F_1$. Standard Green’s function estimates imply \eqref{eq:F_i} holds for $F_2$. Analogous to Proposition \ref{prop:Dnf}, we know that \eqref{eq:F_i} also holds for $F_3$. It remains to deal with $F_4$. For $x,y\in\partial B_+$, let $\omega^+$ be sampled according to $\nu^{\Bc_n}_{x,y}$. Let $\intermediatepart_1\oplus\omega\oplus\intermediatepart_2^R$ be the decomposition of $\omega^+$ according to its first and last visits to $B$. 
	By the gambler's ruin estimate again, there exists  $u>0$ such that for all $x$ and $y$,
	\begin{equation*}\label{eq:G}
		\nu^{\Bc_n}_{x,y}\{ \omega\subset\Bc_{5n/6}(z_n), \omega^+\nsubseteq B(z_n,e^{5n/6}-2e^{5n/8}) \}=O(e^{-un}).
	\end{equation*}
	This implies \eqref{eq:F_i} holds for $F_4$.
\end{proof}
Lemma \ref{lem:total mass of second moment 1} establishes up-to-constants probability estimates for cross terms involving discrete frontier-disk event $K_{3n/4}(z)$ and frontier-point event $\Ac_n(w)$. Lemma \ref{lem:total mass of second moment 2} derives analogous up-to-constants bounds for the continuum frontier-disk event $\widetilde{K}_{{3n/4}}^{(n)}(z)$ and the frontier-point event $\Ac_n(w)$ through the Skorokhod embedding theorem with a decoupling technique.
\begin{lemma}\label{lem:total mass of second moment 1}
	For all $V\in\Vc$, $z,w\in V$, with $|z-w|\ge e^{-n/6}$, we have
	\begin{equation}	
		\Pb\big(K_{3n/4}(z)\cap\Ac_n(w)\big) \asymp_V |z-w|^{-\alpha}e^{-5\alpha n/4}n.
	\end{equation}
\end{lemma}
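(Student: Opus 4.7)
The plan is to combine established results to reduce the claim to a two-point estimate for the continuum frontier Green's function. First, by Proposition \ref{thm:two-point disk and point} and Theorem \ref{thm:one-point1}(ii),
\[
\Pb\big(K_{3n/4}(z)\cap\Ac_n(w)\big) \simeq_V f(n)\,\Pb\big(\Ac_n(z)\cap\Ac_n(w)\big) \simeq_V c_1^{-2}\, f(n)\, e^{-2\alpha n}\, G_{\Dc}^{\fr}(z,w).
\]
Inserting $f(n)\asymp n e^{3\alpha n/4}$ from \eqref{eq:f(n)} gives
\[
\Pb\big(K_{3n/4}(z)\cap\Ac_n(w)\big) \asymp_V n\, e^{-5\alpha n/4}\, G_{\Dc}^{\fr}(z,w),
\]
so the claim reduces to the continuum two-point estimate $G_{\Dc}^{\fr}(z,w) \asymp_V |z-w|^{-\alpha}$ for $z,w\in V\in\Vc$.

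To establish this Green's function estimate, I would extend the path-decomposition argument of Lemma \ref{lem:518} to the two-point setting. Since $z,w\in V$ satisfy $d_z,d_w\asymp_V 1$, both sit in the interior regime. Decomposing $W[0,T_0]$ via the first entries and last exits of $\mathcal{D}_{-s}(z)$ and $\mathcal{D}_{-s}(w)$ (restricting to one ordering by symmetry, as reversed orderings contribute the same mass up to constants), one identifies: a non-disconnecting pair of arms from $\partial\mathcal{D}_{-s}(z)$ to $\partial D(z,|z-w|/4)$ of mass $\asymp e^{-\alpha s}|z-w|^{-\alpha}$ via \eqref{eq: total mass of non-disconnection excursions}, and an analogous pair at $w$; two intermediate parts $\omega^z\subseteq\mathcal{D}_{-2s/3}(z)$ and $\omega^w\subseteq\mathcal{D}_{-2s/3}(w)$, each of mass $\asymp s$; and the outer connecting pieces (from $0$ into the combined $|z-w|/4$-region, between the two balls, and from the region out to $\partial\Dc$) that must satisfy a two-arm non-disconnection of the merged $\{z,w\}$-region at scales above $|z-w|$, contributing $\asymp_V |z-w|^{\alpha}$. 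Multiplying these masses yields $\Pb(\widetilde K_s(z)\cap\widetilde K_s(w))\asymp_V s^2 e^{-2\alpha s}|z-w|^{-\alpha}$, so that
\[
\Eb[L_s(z)L_s(w)] = (c')^2 s^{-2} e^{2\alpha s}\,\Pb\big(\widetilde K_s(z)\cap\widetilde K_s(w)\big) \asymp_V |z-w|^{-\alpha},
\]
and by Theorem \ref{prop:green function and frontier disk 2} the same asymptotic transfers to $G_{\Dc}^{\fr}(z,w)$.

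The main obstacle is the outer $|z-w|^{\alpha}$ contribution from scales above $|z-w|$: it is precisely what cancels one of the two local $|z-w|^{-\alpha}$ factors to leave the correct $|z-w|^{-\alpha}$ dependence (consistent with the fractal heuristic $G\sim|z-w|^{d-2}$ for a set of dimension $d=2-\alpha$), rather than the naive $|z-w|^{-2\alpha}$. One also needs to verify that atypical orderings and backtrackings of $W$ between the two disks contribute negligibly, analogous to the case analysis at the end of the proof of Proposition \ref{thm:two-point disk and point}.
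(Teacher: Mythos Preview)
Your proposal is correct but takes a genuinely different route from the paper. The paper proves this lemma by a \emph{direct} discrete path decomposition of the event $K_{3n/4}(z)\cap\Ac_n(w)$: it splits into the sub-events $E_1,E_2,E_3$ as in the proof of Proposition~\ref{thm:two-point disk and point}, and for $E_1$ decomposes the random walk path into excursion pieces (non-disconnecting arms near $z_n$, non-disconnecting arms near $w_n$, a connector, the intermediate $\omega$ contributing the factor $n$, and outer arms at scale $d_V$), multiplying the masses to obtain $|z-w|^{-\alpha}e^{-5\alpha n/4}n$ directly. You instead reduce via Proposition~\ref{thm:two-point disk and point}, Theorem~\ref{thm:one-point1}(ii) and \eqref{eq:f(n)} to the continuum statement $G_{\Dc}^{\fr}(z,w)\asymp_V|z-w|^{-\alpha}$, and then prove \emph{that} by the analogous path decomposition in the continuum. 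Your reduction is logically sound (none of the cited results depends on this lemma, so there is no circularity), and it has the merit of isolating the two-point Green's function bound as a clean statement of independent interest. The geometric core---in particular the $|z-w|^{\alpha}$ contribution from the outer two-arm non-disconnection at scales above $|z-w|$ that cancels one of the two local $|z-w|^{-\alpha}$ factors---is identical in both arguments; the only difference is whether it is carried out once in the discrete setting (paper) or once in the continuum after invoking the discrete-to-continuum machinery (you).
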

\begin{proof}
	Let $d_V=\dist(0,V,\partial \Dc)$, then $d_V\ge e^{-n/6}$ from the definition of a nice box. We decompose the event $K_{3n/4}(z)\cap \mathcal{A}_n(w)$, using the same method employed in the proof of Proposition \ref{thm:two-point disk and point}. 
	To get the probability of these sub-events $E_1,E_2,E_3$, we still use the path-decomposition method. We focus on $E_1$ first and decompose $\lambda=S[0,\tau_n]$ as follows.
	\begin{itemize}
		\item Let $(\initialconfig_1,\initialconfig_2)$ be a pair of non-disconnecting paths sampled from the boundary-to-boundary measure in $B(z_n,|z-w|e^n/4)\setminus B(z_n,e^{3n/4})$, i.e., the excursion measure $$\nu_{\partial B(z_n,e^{3n/4}),\partial B(z_n,|z-w|e^n/4)}^{B(z_n,|z-w|e^n/4)\setminus\ol{B}(z_n,e^{3n/4})}\otimes\nu_{\partial B(z_n,e^{3n/4}),\partial B(z_n,|z-w|e^n/4)}^{B(z_n,|z-w|e^n/4)\setminus\ol{B}(z_n,e^{3n/4})},$$ such that $\initialconfig_1\cup\initialconfig_2$ does not disconnect $\partial \Bc_{3n/4}(z_n)$ and $\infty$. The total mass of $(\initialconfig_1,\initialconfig_2)$ is $\asymp (\frac{|z-w|e^n/4}{e^{3n/4}})^{-\alpha}$ (see Lemma \ref{thm:total mass} for the continuous case).
		\item Let $(\initialconfig_3,\initialconfig_4)$ be a pair of non-disconnecting paths from $w_n$ to $\partial B(w_n,|z-w|e^n/4)$. By \eqref{sharp estimate for D_m}, the total mass is $\asymp (|z-w|e^n/4))^{-\alpha}$.
		\item Let $\xi$ be the path connecting the endpoints of $\initialconfig_2$ and $\initialconfig_3$ with total mass $\asymp 1$.
		\item Let $\omega$ be the path connecting the endpoints of $\initialconfig_1$ and $\initialconfig_2$ such that $\omega\subseteq \Bc_{5n/6}(z_n)$. By a standard estimate of Green's function, the total mass is $\asymp n$.
		\item Let $(\eta_1,\eta_2)$ be a pair of non-disconnecting paths from the endpoints of $\initialconfig_1$ and $\initialconfig_4$ respectively to $\partial B((z_n+w_n)/{2},d_Ve^n/2)$. By \eqref{eq:non-disconnection probability}, the total mass of $(\eta_1,\eta_2)$ is $\asymp {(\frac{d_Ve^n}{|z-w|e^n})}^{-\alpha}$.
		\item Let $(\lambda_1,\lambda_2)$ be sampled from the path measure $\nu_{0,x_1}^{\Bc_n}\otimes\nu_{\partial \Bc_n,x_2}^{\Bc_n}$ such that $\lambda_1$ and $\lambda_2$ does not disconnect $\partial B((z_n+w_n)/{2},d_Ve^n/2)$ and $\infty$, where $x_i$ is the endpoint of $\eta_i$. By scaling invariance, the total mass of $(\lambda_1,\lambda_2)$ is $\asymp c(V)$.
	\end{itemize}
	Multiplying all of the total masses above, we get the probability of $E_1$. 
	\begin{align*}
		\Pb(E_1) &\asymp_V \Big(\frac{|z-w|e^n/4}{e^{3n/4}}\Big)^{-\alpha} \times (|z-w|e^n/4)^{-\alpha} \times 1 \times n \times {\Big(\frac{d_Ve^n}{|z-w|e^n}\Big)}^{-\alpha}\\ &\asymp_V |z-w|^{-\alpha}e^{-5\alpha n/4}n.
	\end{align*}
	By symmetry, we also know that the probability of $E_2$ has the same order, i.e.,
	\[
	\Pb(E_2) \asymp_V |z-w|^{-\alpha}e^{-5\alpha n/4}n.
	\]
	Following the same reasoning as in Proposition \ref{thm:two-point disk and point} (an additional backtracking), the probability of event $E_3$ becomes negligible relative to $\Pb(E_1)$ and $\Pb(E_2)$. This completes the proof.
\end{proof}

\begin{lemma} \label{lem:total mass of second moment 2}
	Under the coupling \eqref{eq:se0}, for all $V\in\Vc$, $z,w\in V$, with $|z-w|\ge e^{-n/6}$, we have
	\begin{equation}
		\Pb\big(\widetilde{K}_{{3n/4}}^{(n)}(z)\cap\Ac_n(w)\big) \asymp_V |z-w|^{-\alpha}e^{-5\alpha n/4}n.
	\end{equation}
\end{lemma}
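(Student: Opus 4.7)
The plan is to reduce the statement to Lemma~\ref{lem:total mass of second moment 1} via the Skorokhod coupling \eqref{eq:se0}. Specifically, I would establish
\begin{equation*}
\Pb\bigl(\widetilde{K}_{3n/4}^{(n)}(z)\cap\Ac_n(w)\bigr) \asymp_V \Pb\bigl(K_{3n/4}(z)\cap\Ac_n(w)\bigr),
\end{equation*}
and then invoke Lemma~\ref{lem:total mass of second moment 1} to get the required $\asymp_V |z-w|^{-\alpha}e^{-5\alpha n/4}n$. The reduction in turn amounts to showing that the symmetric difference has probability bounded by $|z-w|^{-\alpha}e^{-5\alpha n/4}n\cdot e^{-un}$ for some $u>0$, i.e.\ a lower-order correction to the leading two-point asymptotic.

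First I would use the same structural decomposition as in Lemma~\ref{lem:compare K with tildeK}: on the high-probability event $H$ of \eqref{eq:se0}, we have $K_{3n/4}(z)\bigtriangleup\widetilde{K}_{3n/4}^{(n)}(z)\subseteq F_1\cup F_2\cup F_3\cup F_4$, where $F_i$ are the four local bad events near $z$ defined in the proof of Lemma~\ref{lem:compare K with tildeK} (large boundary overshoot of the SRW, the intermediate piece missing the shrunken disk, the $e^{5n/8}$-sausage creating a spurious disconnection, and the enlarged intermediate piece exiting $\Bc_{5n/6}(z_n)$). Since $|z-w|\ge e^{-n/6}$ gives $|z_n-w_n|\ge e^{5n/6}$, which is much larger than the coupling error $e^{5n/8}$, all these bad events are geometrically localized near $z$ and decouple from the behavior of the walk near $w$.

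Thus the key step is bounding $\Pb(F_i\cap\Ac_n(w))$. I would redo the path-decomposition argument of Lemma~\ref{lem:total mass of second moment 1} (splitting into the three sub-events $E_1,E_2,E_3$ according to the temporal order of the visits to the disk at $z$ and to $w_n$), but now further requiring the local event $F_i$ near $z$ to occur. On the normal decomposition, the local configuration around $z$ contributes a factor of order $(\tfrac{|z-w|e^n/4}{e^{3n/4}})^{-\alpha}\cdot n$; the bad-event analysis from Lemma~\ref{lem:compare K with tildeK} (gambler's ruin, Green's function estimates, and a Proposition~\ref{prop:Dnf}-style separation argument for $F_3$) shows that insisting on $F_i$ produces an additional $O(e^{-un})$ factor without affecting the contributions from the neighborhood of $w$ or from the long connecting excursions (whose total masses still produce the factors $(|z-w|e^n/4)^{-\alpha}$, $O(1)$, and $c(V)$ as in the proof of Lemma~\ref{lem:total mass of second moment 1}). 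Multiplying yields $\Pb(F_i\cap\Ac_n(w))\lesssim_V|z-w|^{-\alpha}e^{-5\alpha n/4}n\cdot e^{-un}$, with the extra contribution from $H^c$ negligible since $\Pb(H^c)=O(e^{-10n})$.

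The main obstacle will be the bookkeeping of the decoupling: one has to verify that when $F_i$ is imposed locally near $z$, the excursions connecting the $z$-neighborhood to the $w$-neighborhood (and to $0$ and to $\partial\Bc_n$) can still be estimated by the same boundary-to-boundary masses and non-disconnection probabilities used in Lemma~\ref{lem:total mass of second moment 1}, uniformly over the configuration realizing $F_i$. Once this decoupling is checked, summing the $F_i$ bounds gives control of the symmetric difference, which together with Lemma~\ref{lem:total mass of second moment 1} closes the argument.
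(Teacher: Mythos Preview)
Your approach differs from the paper's. The paper does not reduce to Lemma~\ref{lem:total mass of second moment 1}; it computes $\Pb(\widetilde{K}_{3n/4}^{(n)}(z)\cap\Ac_n(w))$ directly by confronting the mixed BM/SRW nature of the event. It decomposes $W[0,T_n]$ into segments $\beta_1,\beta_2,\beta_3,\beta_4$ around $z$ and $w$ and invokes a decoupling result (Lemma~3.7 of \cite{GLPS23}) guaranteeing that, on a high-probability event $\Upsilon_{w,n}$, the Brownian pieces $\beta_1\cup\beta_2\cup\beta_4$ and a localized SRW segment $S[\iota_1,\iota_2]$ near $w$ are conditionally independent given $\beta_3$. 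This lets the Brownian frontier-disk contribution ($\asymp_V e^{-\alpha n/4}n$) and the SRW frontier-point contribution ($\asymp(|z-w|e^n)^{-\alpha}$) be multiplied. In the paper's logical order, the symmetric-difference estimate you are aiming for is Lemma~\ref{lem:second moment discrete and continuum}, proved \emph{after} the present lemma and relying both on its statement and on the decomposition introduced in its proof.

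Your route can in principle be made to work, but there is a gap in the direction $G_2=(\widetilde{K}_{3n/4}^{(n)}(z)\setminus K_{3n/4}(z))\cap\Ac_n(w)$. The bad events $F_i$ from Lemma~\ref{lem:compare K with tildeK} cover only $K\setminus\widetilde{K}$ on $H$; for $\widetilde{K}\setminus K$ you must either use Brownian bad events (and then face precisely the mixed-event decoupling problem the paper solves with Lemma~3.7 of \cite{GLPS23}), or first transfer $\widetilde{K}_{3n/4}^{(n)}(z)\cap H$ to a pure SRW event---say a perturbed frontier disk with radius $B_+$---so that the entire computation becomes SRW-only and Lemma~\ref{lem:total mass of second moment 1}'s path decomposition applies. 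You have not addressed this step, and it is more than bookkeeping: without either the decoupling input or an explicit BM-to-SRW translation near $z$, your SRW path decomposition does not apply to $G_2$, and the claimed bound on the symmetric difference is unjustified for that half.
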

\begin{proof}
	In this case, we need Lemma 3.7 of \cite{GLPS23}, a certain type of Strong Markov Property, to decouple BM and SRW. We also decompose the event $\widetilde{K}_{{3n/4}}^{(n)}(z)\cap\Ac_n(w)$ into three sub-events:
	\begin{itemize}
		\item $\widetilde{E}_1$: The frontier-disk event $\widetilde{K}_{{3n/4}}^{(n)}(z)$ happens before $S$ creates a frontier point at $w_n$, and $W$ never returns the vicinity of $z_n$ after $S$ creates a frontier point at $w_n$.
		\item $\widetilde{E}_2$: The frontier-disk event $\widetilde{K}_{{3n/4}}^{(n)}(z)$ happens after $S$ creates a frontier point at $w_n$, and $W$ never enters the vicinity of $z_n$ before $S$ creates a frontier point at $w_n$.
		\item $\widetilde{E}_3$: The BM $W$ enters the vicinity of $z_n$ at least twice, and the SRW $S$ makes a frontier point at $w_n$ between two visits of $\Dc_{3n/4}(z_n)$.
	\end{itemize}
	Due to the same reason in the proof of Proposition \ref{thm:two-point disk and point}, we only need to concentrate on $\widetilde{E}_1$, since $\widetilde{E}_2$ is symmetric and $\widetilde{E}_3$ is negligible. We decompose $W[0,T_n]$ in the following way.
	\begin{itemize}
		\item Let $\initialconfig_1$ be $W$ started from 0 until its first visit of $\partial \Dc_{3n/4}(z_n)$.
		\item Let $\initialconfig_2$ be $W$ started from the endpoint of $\initialconfig_1$ and stopped until hitting $\partial D(z_n,|z-w|e^n/4)$, and we denote this hitting time by $\sigma$.
		\item Let $\initialconfig_3$ be $W$ started from the endpoint of $\initialconfig_2$ and stopped until hitting $\partial D(w_n,|z-w|e^n/4)$ after its last visit of $\Dc_{5n/8}(w_n)$.
		\item Let $\initialconfig_4$ be $W$ started from the endpoint of $\initialconfig_3$ and stopped until hitting $\partial \Dc_n$.
	\end{itemize}
	Using Lemma 3.7 of \cite{GLPS23} with $\eps={1}/{8}$ and $b={5}/{7}>{1}/{2}+\eps$, there exists an event $\Upsilon_{w,n}$ with
	\[
	\Pb\big(\Upsilon_{w,n}^c \cap \Ac_{n}(w)\big) \lesssim e^{-10n}.
	\]
	such that on $\Upsilon_{w,n} \cap \Ac_{n}(w)$, we have
	\begin{equation} \label{eq:decouple}
		\initialconfig_1 \cup \initialconfig_2 \cup \initialconfig_4 \,\,\mathrm{and}\,\,S[\iota_1,\iota_2]\,\,\mathrm{are\ conditionally\ independent\ given}\,\,\initialconfig_3,
	\end{equation}
	where
	\[
	\iota_1=\inf\{t>\sigma:S(t)\in\partial B(w_n,|z-w|e^n/8) \},
	\]
	and
	\[
	\iota_2=\inf\{t>\tau_{w_n}:S(t)\in\partial B(w_n,|z-w|e^n/8) \}.
	\]
	If $w_n$ is a frontier point of $S$, then it is also a frontier point of $S[\iota_1,\iota_2]$, which satisfies
	\begin{equation} \label{eq:total mass 1}
		\Pb\big(w_n\in \fr(S[\iota_1,\iota_2])\big)\asymp(|z-w|e^n)^{-\alpha}.
	\end{equation}
	Now, it suffices to compute the total mass of $(\initialconfig_1,\initialconfig_2,\initialconfig_4)$ that satisfies the frontier-disk event $\widetilde{K}_{{3n/4}}^{(n)}(z)$ as follows.
	\begin{itemize}
		\item Let $\initialconfig_1^1$ be the part of $\initialconfig_1$ started from 0 and stopped until its last visit of $\partial D(z_n,|z-w|e^n/4)$. By \eqref{eq:non-disconnection probability}, the total mass of non-disconnecting paths $(\initialconfig_1^1,\initialconfig_4)$ is $\asymp_V (\frac{d_Ve^n}{|z-w|e^n})^{-\alpha}$.
		\item Let $\initialconfig_1^2=\initialconfig_1\setminus\initialconfig_1^1$ and $\initialconfig_2^2=\initialconfig_2\setminus\initialconfig_2^1$ where $\initialconfig_2^1$ is the part of $\initialconfig_2$ started from the endpoint of $\initialconfig_1$ and stopped until its last visit of $\partial \Dc_{3n/4}(z_n)$. By \eqref{eq:non-disconnection probability}, the total mass of non-disconnecting paths  $(\initialconfig_1^2,\initialconfig_2^2)$ is $\asymp (\frac{|z-w|e^n}{e^{3n/4}})^{-\alpha}$.
		\item By a standard estimate of Green's function, the total mass of $\initialconfig_2^1\subseteq\Dc_{5n/6}(z_n)$ is $\asymp n$.
	\end{itemize}
	The multiplication of the above factors implies that the total mass of $(\initialconfig_1,\initialconfig_2,\initialconfig_4)$ that satisfies the frontier-disk event $\widetilde{K}_{{3n/4}}^{(n)}(z)$ is
	\begin{equation} \label{eq:total mass 2}
		\asymp_V \Big(\frac{d_Ve^n}{|z-w|e^n}\Big)^{-\alpha} \times \Big(\frac{|z-w|e^n}{e^{3n/4}}\Big)^{-\alpha} \times n \asymp_V e^{-\alpha n/4}n.
	\end{equation}
	According to \eqref{eq:decouple}, we can multiply \eqref{eq:total mass 1} by \eqref{eq:total mass 2} to conclude the proof.
\end{proof}
The following lemma strengthens these second moments results by proving an asymptotic equivalence between the joint probabilities of discrete frontier-disk and frontier-point events and their continuous analogues, demonstrating that their second moments converge to the same order with an exponentially small error.
\begin{lemma}\label{lem:second moment discrete and continuum}
	Under the coupling \eqref{eq:se0}, for all $V\in\Vc$, $z,w\in V$, with $|z-w|\ge e^{-n/6}$, we have
	\begin{equation}
		\Pb\big(K_{3n/4}(z)\cap\Ac_n(w)\big)\simeq_V \Pb\big(\widetilde{K}_{{3n/4}}^{(n)}(z)\cap\Ac_n(w)\big).
	\end{equation}
\end{lemma}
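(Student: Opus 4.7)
The plan is to reduce to a symmetric-difference estimate analogous to Lemma \ref{lem:compare K with tildeK}. By Lemmas \ref{lem:total mass of second moment 1} and \ref{lem:total mass of second moment 2}, both joint probabilities are of order $|z-w|^{-\alpha}e^{-5\alpha n/4}n$, so it suffices to establish, for some $u>0$,
\begin{equation*}
\Pb\big((K_{3n/4}(z) \triangle \widetilde{K}_{3n/4}^{(n)}(z)) \cap \Ac_n(w)\big) \lesssim_V |z-w|^{-\alpha}e^{-5\alpha n/4}n \cdot e^{-un}.
\end{equation*}
Under the coupling \eqref{eq:se0} we have $\Pb(H^c) = O(e^{-10n})$, which is harmless, so we may work on the good event $H$.

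On $H$, following the proof of Lemma \ref{lem:compare K with tildeK}, the symmetric difference $K_{3n/4}(z) \triangle \widetilde{K}_{3n/4}^{(n)}(z)$ is contained in $F_1 \cup F_2 \cup F_3 \cup F_4$, the four local events near $z_n$ introduced there. Thus the strategy is to bound each $\Pb(F_i \cap \Ac_n(w))$ by adapting the multi-scale path decomposition from the proof of Lemma \ref{lem:total mass of second moment 1}. Since every $F_i$ is localized on the scale $\partial\Bc_{3n/4}(z_n)$ while $\Ac_n(w)$ lives in a neighborhood of $w_n$ at distance $\asymp|z-w|$ from $z_n$, after splitting into the sub-events $E_1, E_2, E_3$ according to the ordering between the visit to $\Bc_{3n/4}(z_n)$ and the creation of the frontier point at $w_n$, the cost imposed by $F_i$ should factorize from the cost of the $w$-side decomposition. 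In other words, the standard chain of non-disconnection, intermediate, and connecting factors producing $|z-w|^{-\alpha}e^{-5\alpha n/4}n$ will survive, while $F_i$ contributes an additional $e^{-un}$ factor exactly as in \eqref{eq:F_i} applied locally near $z_n$. The negligible sub-event $E_3$ is handled, as in Proposition \ref{thm:two-point disk and point}, by the extra backtracking cost from Lemma \ref{one-arm disconnection exponent}.

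The main obstacle I anticipate is the event $F_3$, which asks whether fattening the disconnecting pair $\lambda_1\cup\lambda_2$ by an $e^{5n/8}$-sausage destroys non-disconnection at $z_n$. Here the extra penalty is not produced by a one-step gambler's ruin or Green's-function estimate, so I plan to combine the forward and reverse separation lemmas (Lemma \ref{lem:rw-disc-sep} and Remark \ref{rmk:329}) with a Russo-type argument in the spirit of Proposition \ref{prop:Dnf}: after conditioning on well-separated configurations on the relevant annulus around $z_n$, one can estimate the sausage-disconnection probability uniformly in the configuration, decoupled from the decomposition used to realize $\Ac_n(w)$. Plugging this uniform bound into the $E_1$/$E_2$ chain decomposition yields the required $e^{-un}$ improvement over the two-point rate, which completes the proof.
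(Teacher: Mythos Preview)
Your proposal is correct and follows essentially the same route as the paper: reduce via Lemmas~\ref{lem:total mass of second moment 1}--\ref{lem:total mass of second moment 2} to bounding the symmetric difference intersected with $\Ac_n(w)$, cover it on $H$ by a short list of local failure events near $z_n$ (sausage closes the opening, intermediate part grazes the boundary, etc.), and extract an extra $e^{-un}$ from each on top of the two-point rate. The only organizational difference is that the paper carries out the argument inside the Brownian decomposition $\initialconfig_1,\ldots,\initialconfig_4$ of Lemma~\ref{lem:total mass of second moment 2} rather than your $F_i$/$E_j$ scheme built from Lemmas~\ref{lem:compare K with tildeK} and~\ref{lem:total mass of second moment 1}; this matters for the direction $G_2=(\widetilde K^{(n)}_{3n/4}(z)\setminus K_{3n/4}(z))\cap\Ac_n(w)$, where the $z$-event is Brownian and the $w$-event is discrete, so the decoupling step from Lemma~\ref{lem:total mass of second moment 2} is what makes the ``factorization'' you appeal to rigorous.
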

\begin{proof}
	Let $$G_1=\big(K_{3n/4}(z)\cap\Ac_n(w)\big)\setminus\big(\widetilde{K}_{{3n/4}}^{(n)}(z)\cap\Ac_n(w)\big),$$ and $$G_2=\big(\widetilde{K}_{{3n/4}}^{(n)}(z)\cap\Ac_n(w)\big)\setminus\big(K_{3n/4}(z)\cap\Ac_n(w)\big).$$
	
	By Lemmas~\ref{lem:total mass of second moment 1} and \ref{lem:total mass of second moment 2}, we only need to show that for $i=1,2$, we have
	\begin{equation}\label{E_i small}
		\Pb(G_i)\lesssim_V |z-w|^{-\alpha}e^{-5\alpha n/4}e^{-un}.
	\end{equation}
	Let us deal with $G_2$ for illustration. The notation in the following items are introduced in the proof of Lemma~\ref{lem:total mass of second moment 2}. We will only sketch the proof, since it is similar to that of Lemma~\ref{lem:compare K with tildeK}.
	Under the Skorokhod embedding \eqref{eq:se0}, on the event $G_2$, one of the following four events will happen:
	\begin{itemize}
		\item (a): $\initialconfig_2^1\cap \partial \Dc_{5n/6}(z_n) = \emptyset$, but the $e^{5n/8}$-sausage $\sausage(\initialconfig_2^1,e^{5n/8})$ intersects $\partial \Dc_{5n/6}(z_n)$.
		\item (b): $\initialconfig_1^1\cup\initialconfig_4$ does not disconnect $z_n$ from $\infty$ but the $e^{5n/8}$-sausage $\sausage(\initialconfig_1^1\cup\initialconfig_4,e^{5n/8})$ does.
		\item (c): $\initialconfig_1^2\cup\initialconfig_2^2$ does not disconnect $z_n$ from $\infty$ but the $e^{5n/8}$-sausage $\sausage(\initialconfig_1^2\cup\initialconfig_2^2,e^{5n/8})$ does.
		\item (d): $\initialconfig_1\cup\initialconfig_3$ does not disconnect $z_n$ from $\infty$ but the $e^{5n/8}$-sausage $\sausage(\initialconfig_1\cup\initialconfig_3,e^{5n/8})$ does.
	\end{itemize}
	Now, using the same strategy as that of Proposition~\ref{lem:compare K with tildeK}, we can always obtain an extra cost $O(e^{-un})$ from (a), (b), (c) and (d) respectively. Thus, we conclude that \eqref{E_i small} holds for $G_2$. The event $G_1$ can be analyzed in a similar way. We thus finish the proof of this proposition.
\end{proof}

\subsection{Proof of Theorem \ref{thm:main result 0}}\label{sec5.2:Proof of main theorem}
According to the Portmanteau theorem, to prove Theorem \ref{thm:main result 0}, it suffices to show that under the Skorokhod embedding \eqref{eq:se0} (assumed below), for any bounded continuous function $g:\ol\Dc\to\Rb$, we have
	\begin{equation}\label{eq:vg}
		\lim_{n \to \infty} \Eb[\nu_n(g)-\nu(g)]^2=0.
	\end{equation}
To prove \eqref{eq:vg}, we first show the convergence of occupation measures on boxes $V\in\Vc$. Following the same lines as in the proof of Proposition 10.1 of \cite{GLPS23}, and using Proposition \ref{thm:two-point disk and point}, Lemma \ref{lem:second moment discrete and continuum}, equation \eqref{eq:f(n)} and Theorem \ref{thm:one-point1}(ii) as inputs now, we can deduce that for all $V\in \Vc$ with $\dist(0,V,\partial \Dc) \geq e^{-n/6}$,
	\begin{equation*}
		\Eb[\nu_n(V) - \widetilde\nu_{n/4}(V)]^2 = O_V(e^{-un}).
	\end{equation*}
This combined with Theorem \ref{thm:tilde_nu_s and nu} yields that for any nice box $V$ (recall \eqref{eq:def of nice box} for definition) in the unit disk $\mathcal{D}$,
	\begin{equation}\label{eq:convergence of indicator}
		\Eb[\nu_{n}(V)-\nu(V)]^{2}= O_V(e^{-un}).
	\end{equation}
Finally, we conclude \eqref{eq:vg} by approximating $g$ by step functions (similar to the proof of Theorem 1.2 in \cite{GLPS23}). We finish the proof of Theorem \ref{thm:main result 0} and omit further details.

\section{Convergence under natural parametrization}\label{sec6:Convergence under natural parametrization}
In this section, we are going to prove Theorem \ref{thm:convergence under np}, the convergence of $\{\gamma_n\}$ under natural parametrization, where $\gamma_n$ is the random walk frontier defined in Section \ref{sec1.1:Convergence under natural parametrization 1}. The proof consists of two parts. First, we prove in Section \ref{sec6.1:convergence under rp} that $\gamma_n$ converges to $\widetilde\gamma$ under reparametrization metric, which is done by first proving tightness and then uniqueness, making use of the result of convergence under Hausdorff metric. Then, we finish the proof in Section \ref{sec6.2:Proof of Theorem 1.1} by combining the convergence under reparametrization and convergence of occupation measure. 
\subsection{Convergence under reparametrization}\label{sec6.1:convergence under rp}
Using the KMT coupling from Lemma \ref{lem:KMT1} and scaling by $e^{-n}$, we can couple the random walk $S_n[0,\tau_{0}]$ and the Brownian motion $W[0,T_{0}]$ on a common probability space such that:
\begin{equation}\label{eq:se1}
	\Pb (H^c)  =O(e^{-10n})\ \    \mathrm{with}\ \ 
	H:=\big\{\max_{0\le t\le \tau_{0}\vee T_{0}} |W(t)-S_n(t) |\le K n e^{-n}\big\}.
\end{equation}
Recall from Section \ref{sec1.1:Convergence under natural parametrization 1} that $\Pc = \{\gamma:[0,t_\gamma]\to\Rb^2\}$ denotes planar curves. 
The metric modulo reparametrization, $d_{Re}$ is formally defined by
\begin{equation}
    d_{Re}(\gamma,\gamma') = \inf_{\alpha,\alpha'}\big\{  \sup_{s\in [0,1]} \left| \gamma(\alpha(s))-\gamma'(\alpha'(s)) \right|\big\},
\end{equation}
with infimum over increasing continuous bijections $\alpha:[0,1]\to [0,t_{\gamma}]$ and $\alpha':[0,1]\to [0,t_{\gamma'}]$. The main result of this section is the following theorem.
\begin{thm}\label{thm:main result 3}
Almost surely, $\gamma_n$ converges to $\widetilde \gamma$ under $d_{Re}$.
\end{thm}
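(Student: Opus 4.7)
The plan is the two-step approach outlined in Section \ref{sec1.1:Convergence under natural parametrization 1}: (a) establish tightness of $\{\gamma_n\}$ in $(\Pc, d_{Re})$ via the Aizenman-Burchard criterion \cite{AB99}, and (b) identify every subsequential limit with $\widetilde\gamma$ using the Hausdorff-metric convergence from \cite{VCL16}. Throughout I would work on the probability space carrying the KMT coupling \eqref{eq:se1}, under which $\fr(\lambda_n[0,\tau_0]) \to \fr(W[0,T_0])$ in the Hausdorff metric and $\lambda_n(\tau_0) \to W(T_0)$, both almost surely.

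For step (a), I would verify the Aizenman-Burchard polynomial bound
\begin{equation*}
\Pb\bigl(\gamma_n \text{ makes at least } k \text{ disjoint crossings of } \annulus(x,r,R)\bigr) \,\leq\, C\,(r/R)^{\beta}
\end{equation*}
for some $k \in \Nb$ and some $\beta > 2$, uniformly in $n$, $x \in \Cb$, and $0 < r < R \leq 1$. A $k$-fold crossing forces $k$ disjoint arms of $\lambda_n$ across $\annulus(x,r,R)$ whose union does not disconnect the inner circle from $\infty$, which is a multi-arm non-disconnection event. Using \eqref{eq:intersection exponents} and the monotonicity of $\xi(k,\lambda)$ in $k$ to select $k$ large enough that the associated exponent exceeds $2$, and iterating the separation-style arguments of Section \ref{sec2.2:prelim} (in particular Lemma \ref{lem:rw-disc-sep} and Remark \ref{rmk:329}) across scales together with the scaling and Markov properties of the walk, one obtains the required uniform bound.

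For step (b), suppose $\gamma_{n_j} \to \gamma^\ast$ in $d_{Re}$ along a subsequence, on a full-probability event. Since $d_{Re}$-convergence implies Hausdorff convergence of images, $\mathrm{Image}(\gamma^\ast) = \fr(W[0,T_0])$ by \cite{VCL16}. The endpoints of $\gamma^\ast$ equal $W(T_0)$ by the endpoint convergence afforded by \eqref{eq:se1}, and the counterclockwise orientation is preserved in the limit because it is encoded by the winding of the loop around the origin. Although the Brownian frontier is non-simple \cite{Q21}, its counterclockwise traversal starting from a prescribed point on the loop is uniquely determined modulo reparametrization, so $\gamma^\ast = \widetilde\gamma$ in $(\Pc, d_{Re})$. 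Combining tightness with this uniqueness yields convergence in probability, and the quantitative (polynomial) AB99 bounds together with the almost-sure Hausdorff convergence under \eqref{eq:se1} upgrade this to almost-sure convergence via a Borel--Cantelli / subsequence argument.

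The main technical hurdle is step (a): translating the multi-arm non-disconnection exponents into a polynomial bound of exactly the form required by Aizenman-Burchard, with the right exponent $\beta > 2$ and full uniformity in the annulus location and scale, is the delicate point. The machinery set up in Sections \ref{sec2.2:prelim} and \ref{sec4:Moment estimates of frontier points and frontier disks} --- the separation lemmas, coupling results, and sharp non-disconnection estimates \eqref{eq:rw-disc-exp} and \eqref{sharp estimate for D_m} --- provides the inputs one expects to need, and the remaining work is to iterate them across scales and combine over the $k$ arms.
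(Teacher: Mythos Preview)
Your tightness argument is more elaborate than necessary. The Aizenman--Burchard hypothesis (Lemma~\ref{lem:AB99}) asks only that the $k$-crossing probability be bounded by $C_k(r/R)^{\lambda(k)}$ with $\lambda(k)\to\infty$; you do not need an exponent exceeding $2$ for a single fixed $k$. The paper simply observes that each traversal of $\annulus(x,r,R)$ by the frontier forces a one-arm non-disconnection event for the underlying walk, and by the strong Markov property $k$ successive traversals cost at most a constant times $(r/R)^{k/4}$ via Lemma~\ref{one-arm disconnection exponent}. No multi-arm exponents or separation machinery is needed for this step.

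The genuine gap is in your uniqueness argument. You assert that ``its counterclockwise traversal starting from a prescribed point on the loop is uniquely determined modulo reparametrization,'' but this is precisely the nontrivial point, not a fact you can invoke. The Brownian frontier is a non-simple loop \cite{Q21}, so even after you know that every subsequential limit $\gamma^\ast$ has the same image, endpoints, and orientation as $\widetilde\gamma$, nothing you have said prevents $\gamma^\ast$ from containing a \emph{backtracking}: a portion of the trace traversed and then immediately retraced in reverse. Such a $\gamma^\ast$ would satisfy all your stated constraints yet differ from $\widetilde\gamma$ in $(\Pc,d_{Re})$. The paper rules this out by a four-arm argument: a backtracking in the limit forces, at arbitrarily small scales, four arms of the underlying random walk crossing an annulus with one pair disjoint from the other pair (see \eqref{eq1130}). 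The governing exponent is $\xi(2,2)=35/12>2$, so a first-moment bound shows the expected number of such ``bad disks'' is $O(e^{-n/3})\to 0$, and Markov's inequality plus Borel--Cantelli excludes backtracking almost surely. Your proposal omits this step entirely; without it the identification $\gamma^\ast=\widetilde\gamma$ is unjustified.
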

To establish Theorem \ref{thm:main result 3}, we will first demonstrate the tightness of the sequence $\{\gamma_n\}$. This property ensures that every subsequence admits a further convergent subsequence. Then, we will verify that all such subsequential limits coincide, thereby confirming their uniqueness.
\subsubsection{Tightness}
In this subsection, we will show the tightness of $\{\gamma_n\}$ under $d_{Re}$, using a result of Aizenman Burchard \cite{AB99}. We first recall Theorem 1.2 from \cite{AB99}.
\begin{lemma}[\cite{AB99},\ Theorem 1.2]\label{lem:AB99}
    Let $\{\gamma_n\}$ be a system of random curves, where the step size of the polygonal paths $\gamma_n$ is $1/n$. Assume that $\{\gamma_n\}$ satisfies the following hypothesis\footnote{This power bound on the probability of multiple crossings is called \textit{Hypothesis H1} in \cite{AB99}.}: 
    for all integers $k \ge1$, there exists $\lambda(k) \to \infty$ as $k \to \infty$, such that for any annulus $\annulus(x,r,R)$ with radii $1/n\le r<R\le 1$,  
    \begin{equation}\label{eq:H1}
	\Pb \big( \annulus(x,r,R) \text{ is traversed by $\gamma_n$ at least $k$ times}
	\big) \lesssim_k \left({r}/{R}\right)^{\lambda(k)}.
\end{equation}Then, $\{\gamma_n\}$ is tight with respect to $d_{Re}$.
\end{lemma}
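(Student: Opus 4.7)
Since Lemma~\ref{lem:AB99} is quoted verbatim from \cite[Theorem 1.2]{AB99}, the plan is essentially to invoke that reference directly; below is a sketch of the Aizenman--Burchard argument for context. The starting observation is that a crossing-count bound immediately controls oscillation: if $\gamma_n$ crosses the annulus $\annulus(x,r,R)$ at most $k-1$ times, then any sub-arc of $\gamma_n$ that repeatedly enters $D_r(x)$ and leaves $D_R(x)$ has its excursions controlled in terms of $R-r$ by a pigeonhole argument.

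From here, the first step is a union-bound / chaining argument: fix $k$ so large that $\lambda(k)>2$, cover $\Rb^2$ by a $2^{-j}$-net of centers $x$ for each dyadic scale $j$, and apply \eqref{eq:H1} to each annulus $\annulus(x,2^{-j-m},2^{-j})$ in the net. Summing over $x$ at a fixed scale $j$ costs a factor $2^{2j}$ but \eqref{eq:H1} gives a factor $2^{-m\lambda(k)}$, and since $\lambda(k)$ can be chosen arbitrarily large, the resulting double sum over $(j,m)$ is summable. Borel--Cantelli then yields, almost surely and uniformly in $n$, a H\"older-type modulus of continuity for $\gamma_n$ reparametrized at unit speed along its arc length. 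The second step is then a standard Arzel\`a--Ascoli / Prokhorov argument in the curve space quotiented by increasing continuous bijections, which converts the uniform modulus of continuity into tightness of $\{\gamma_n\}$ under $d_{Re}$.

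The main obstacle in the Aizenman--Burchard proof, and the reason the exponent $\lambda(k)$ must grow unboundedly in $k$ rather than merely be positive, is precisely the passage from a crossing-count bound to a pointwise H\"older modulus: one has to rule out configurations where many small excursions accumulate to create a large oscillation on a scale not directly tested by \eqref{eq:H1}. This combinatorial step is the technical heart of \cite{AB99} and is what gets black-boxed by invoking the lemma. In the present paper, the substantive task that remains is to verify the input \eqref{eq:H1} for the specific sequence $\{\gamma_n\}$, which will reduce, via the disconnection-exponent estimates of Section~\ref{sec2.2:prelim} and the moment bounds of Section~\ref{sec4:Moment estimates of frontier points and frontier disks}, to a $k$-arm-type non-disconnection bound for pairs of simple random walks crossing an annulus $k$ times.
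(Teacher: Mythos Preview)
Your proposal is correct: the paper does not prove Lemma~\ref{lem:AB99} at all but simply cites it from \cite{AB99}, so invoking the reference is exactly what is done. Your sketch of the Aizenman--Burchard argument is helpful context but goes beyond what the paper provides.

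One small inaccuracy in your final paragraph: the verification of \eqref{eq:H1} in the paper (for the subsequent tightness theorem) does not use the moment bounds of Section~\ref{sec4:Moment estimates of frontier points and frontier disks}. It uses only Lemma~\ref{one-arm disconnection exponent} and the strong Markov property, taking $\lambda(k)=k/4$: each of the $k$ traversals of the annulus by the frontier forces a one-arm non-disconnection event for the underlying walk, and these are concatenated via the Markov property. No two-arm or $k$-arm estimate is needed, and Section~\ref{sec4:Moment estimates of frontier points and frontier disks} plays no role here.
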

We now prove the following theorem on the tightness of $\{\gamma_n\}$.
\begin{thm}
	The sequence $\{\gamma_n\}$ is tight under $d_{Re}$.
\end{thm}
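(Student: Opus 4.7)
The plan is to apply the Aizenman--Burchard criterion (Lemma~\ref{lem:AB99}) and verify Hypothesis~H1 for the sequence $\{\gamma_n\}$. Fix an integer $k\ge 1$ and an annulus $\annulus(x,r,R)$ with $e^{-n}\le r<R\le 1$; we may assume $R/r\ge 2$ by adjusting constants.

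The first step is a geometric reduction: if $\gamma_n$ traverses $\annulus(x,r,R)$ at least $k$ times, then $\fr(\lambda_n[0,\tau_0])$ contains $k$ disjoint subarcs joining $\partial D(x,r)$ to $\partial D(x,R)$. Each such frontier subarc is carried by a walk arm of $\lambda_n$ whose two flanks lie in different components of the complement of $\lambda_n[0,\tau_0]$, one of which is the unbounded component. Following this flank structure cyclically around $x$, I would extract $2k$ disjoint arms of $\lambda_n$ crossing $\annulus(x,r,R)$ whose union does not disconnect $\partial D(x,r)$ from $\partial D(x,R)$. This is the natural $2k$-arm analogue of the non-disconnection event $D_m(\ol\zeta)$ from \eqref{eq:D}.

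The second step is a multi-arm non-disconnection estimate. I would show that there exist constants $C_k>0$ and an exponent $\lambda(k)$ such that
\begin{equation*}
\Pb\big(\text{there exist $2k$ arms of $\lambda_n$ in non-disconnecting configuration across $\annulus(x,r,R)$}\big) \le C_k (r/R)^{\lambda(k)},
\end{equation*}
with $\lambda(k)\to\infty$. The natural candidate is $\lambda(k)=\xi(2k)$, the $2k$-arm disconnection exponent obtained as $\lim_{\lambda\to 0^+}\xi(2k,\lambda)$ from \eqref{eq:intersection exponents}; one checks that $\xi(2k)=((\sqrt{48k+1}-1)^2-4)/48\sim k$ as $k\to\infty$, which is more than enough. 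To derive this bound for SRW, I would iterate the two-arm non-disconnection estimate \eqref{eq:rw-disc-exp} across $\asymp\log(R/r)$ dyadic sub-annuli, using the discrete separation lemma (Lemma~\ref{lem:rw-disc-sep}) to guarantee at each step that the arms remain well-separated enough to apply the Markov property, and appealing to Skorokhod embedding \eqref{eq:se01} to transfer the Brownian multi-arm exponent to SRW. Combining the two steps yields Hypothesis~H1, and Lemma~\ref{lem:AB99} delivers tightness.

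The main obstacle is step two: while the two-arm non-disconnection estimate and separation lemma are carefully recorded in Section~\ref{sec2.2:prelim}, their extension to $2k$ arms (with the same arguments producing a polynomial bound whose exponent diverges in $k$) involves nontrivial combinatorial bookkeeping of the cyclic ordering of arms and a multi-arm version of the separation lemma. Fortunately, we only need qualitative divergence $\lambda(k)\to\infty$, so suboptimal exponents obtained from a coarse iteration of \eqref{eq:rw-disc-exp} are acceptable, and the quantitative value $\xi(2k)$ need not be identified.
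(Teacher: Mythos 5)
Your overall plan --- verify Hypothesis H1 and invoke Lemma~\ref{lem:AB99} --- matches the paper's, but your route to the polynomial bound is more elaborate than necessary. You reduce to a $2k$-arm non-disconnection event and propose $\lambda(k)=\xi(2k)$, which indeed diverges, but this requires a multi-arm separation lemma and the ``combinatorial bookkeeping'' you flag as an obstacle, neither of which the paper has recorded or needs. The paper's argument is substantially leaner: each successive traversal of the annulus imposes, via the strong Markov property, a fresh one-arm non-disconnection constraint at cost $\asymp (r/R)^{1/4}$ by Lemma~\ref{one-arm disconnection exponent}, so $k$ traversals already cost $(r/R)^{k/4}$ by multiplying conditional probabilities along the trajectory. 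This gives $\lambda(k)=k/4$, and since Hypothesis H1 only requires $\lambda(k)\to\infty$, the crude linear bound suffices --- the paper even remarks that one \emph{could} use the $k$-arm exponent $\xi(k)$ but need not. There is also a caveat on your geometric reduction: it is not evident that $k$ frontier traversals yield $2k$ walk arms in a \emph{jointly} non-disconnecting configuration. Each frontier arc is carried by walk points, but the ``two flanks'' picture does not automatically produce two disjoint walk arms per frontier arc, and the accessibility of a frontier arc from $\infty$ is relative to the whole walk, not merely to the arms you extract, so the reduction to a clean $2k$-arm event would need more care than your sketch gives. The sequential one-arm argument applied along the time-ordered excursions of the walk sidesteps both the multi-arm separation machinery and this extraction subtlety, which is why the paper opts for it.
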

\begin{proof}
    We first verify \eqref{eq:H1} for $\{\gamma_n\}$. 
Actually, this can be derived from Lemma~\ref{one-arm disconnection exponent} and the strong Markov property of Brownian motion, with $\lambda(k)= k/4$ (indeed, one can choose $\lambda(k)$ to be the disconnection exponent $\xi(k)$, but we do not need such a stronger estimate here). Then, the tightness follows from Lemma \ref{lem:AB99} immediately.
\end{proof}

\subsubsection{Uniqueness}
In this subsection, we will show the uniqueness of subsequential limits of $\{\gamma_n\}$ under $d_{Re}$ by analyzing the number of {\it bad} disks (see \eqref{eq1130} for a precise definition).
\begin{thm}
All subsequential limits of $\{\gamma_n\}$ under $d_{Re}$ coincide almost surely.
\end{thm}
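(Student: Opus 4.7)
My plan is, given any subsequence $n_k\to\infty$ along which $\gamma_{n_k}\to\gamma^*$ in $d_{Re}$, to show that $\gamma^*$ equals $\widetilde{\gamma}$ modulo reparametrization. The key inputs are the Hausdorff-metric convergence $\fr(\lambda_n[0,\tau_0])\to\fr(W[0,T_0])$ from \cite{VCL16}, the $\mathrm{SLE}_{8/3}$-type simplicity of the Brownian frontier, and a quantitative bound on how often the discrete curve can traverse small annuli. The map $\gamma\mapsto\gamma([0,t_\gamma])$ is continuous from $(\Pc,d_{Re})$ into the space of compact sets with the Hausdorff metric, so $\gamma^*$ and $\widetilde{\gamma}$ have the same image. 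Combined with $\lambda_{n_k}(\tau_0)\to W(T_0)$ and the common counterclockwise orientation, $\gamma^*$ is a continuous loop based at $W(T_0)$ that parametrizes the Brownian frontier in the positive sense.

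The main technical step is to rule out backtracking of $\gamma^*$. For this I would call a dyadic disk $\Dc_k(z):=D(z,2^{-k})\subset\Dc$ \emph{bad} for $\gamma_n$ if $\gamma_n$ makes at least three disjoint crossings of $\annulus(z,2^{-k},2^{-k+1})$ each of which meets $\Dc_k(z)$. A bad disk forces two crossings that constitute a non-disconnecting pair at scale $2^{-k}$, which costs a factor $\asymp 2^{-k\alpha}$ by the random walk disconnection estimate \eqref{eq:rw-disc-exp}, plus an additional crossing reaching the center through the already-formed frontier, which costs a further polynomial factor by Lemma \ref{one-arm disconnection exponent}. Summing over the $O(2^{2k})$ dyadic centers, the expected number of bad disks at scale $2^{-k}$ intersecting $\gamma_n$ decays like $2^{-\eta k}$ for some $\eta>0$, uniformly in $n$. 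A Borel--Cantelli argument then yields, almost surely, that for every $\epsilon>0$ only finitely many scales carry bad disks of radius $\geq\epsilon$ intersecting any compact subset of the frontier.

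Given this bad-disk bound, I would argue that $\gamma^*$ must trace $\fr(W[0,T_0])$ monotonically: whenever $\gamma^*$ revisits a small neighbourhood of a regular frontier point $p$, the revisit—lifted back to the approximants $\gamma_{n_k}$ via the KMT coupling \eqref{eq:se1}—must correspond to multiple disjoint crossings of small annuli around $p$, which is precisely the bad-disk event ruled out above. Since $\widetilde{\gamma}$ is itself simple away from $W(T_0)$, a continuous surjection of the frontier based at $W(T_0)$ with the correct orientation and without backtracking is uniquely determined up to reparametrization, giving $\gamma^*=\widetilde{\gamma}$ in $d_{Re}$.

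The main obstacle will be the ``revisit implies bad disk'' implication in the limit: one must carefully compare the temporal order of visits for $\gamma_{n_k}$ with that of its $d_{Re}$-limit $\gamma^*$, because $d_{Re}$-convergence does not by itself preserve the combinatorial order in which dyadic subarcs of the frontier are traversed. The plan is to use \eqref{eq:se1} to approximate, scale by scale, the times at which $\gamma^*$ enters and exits each dyadic neighbourhood of $\fr(W[0,T_0])$ by the corresponding times for $\gamma_{n_k}$, and then to propagate the discrete bad-disk bound and the Hausdorff-metric identification through this approximation to conclude monotonicity of $\gamma^*$ almost surely.
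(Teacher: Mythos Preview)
Your overall strategy matches the paper's: identify the trace of any subsequential limit via Hausdorff convergence from \cite{VCL16}, then rule out backtracking by an arm-event/union-bound argument at the discrete level. The gap is in your exponent. Your bad-disk event---two crossings forming a non-disconnecting pair plus one extra crossing---yields at best a cost of order $2^{-k(\alpha+1/4)}=2^{-11k/12}$, since $\alpha=\xi(2)=2/3$ and the additional arm contributes at most the one-arm exponent $\xi(1)=1/4$ from Lemma~\ref{one-arm disconnection exponent}. Summed over the $O(2^{2k})$ dyadic centres this diverges, so neither the first-moment bound nor the Borel--Cantelli step goes through. (There is also a conflation of ``crossings of the frontier curve $\gamma_n$'' with ``arms of the underlying walk $S$'' in your cost estimate; the disconnection estimates \eqref{eq:rw-disc-exp} and \eqref{eq:one-arm event} apply to the latter, not the former.)

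The paper repairs this by working directly with arms of the underlying random walk rather than of $\gamma_n$. A backtracking of the limiting frontier forces \emph{four} SRW crossings $l_1,\dots,l_4$ of a mesoscopic annulus satisfying $(l_{\pi(1)}\cup l_{\pi(4)})\cap(l_{\pi(2)}\cup l_{\pi(3)})=\emptyset$ for some permutation $\pi$. This is a two-packets non-intersection event governed by the intersection exponent $\xi(2,2)=35/12>2$ (see \eqref{eq:intersection exponents} and \cite{KM10}), and that exponent \emph{does} beat the union bound over centres, giving expected number of bad disks $O(e^{-n/3})\to 0$. The missing idea in your proposal is precisely this combinatorial observation---that a double visit of the boundary curve produces two mutually avoiding \emph{pairs} of walk arms---and the corresponding use of $\xi(2,2)$ rather than $\xi(2)+\xi(1)$.
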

\begin{proof}
    Suppose, for contradiction, that there exist two subsequences $\gamma_{k_n}$ and $\gamma_{l_n}$ converging under $d_{Re}$ to distinct limits $\widehat \gamma_1$ and $\widehat \gamma_2$ respectively (for a positive probability). By Corollary 5.3 (i) of \cite{VCL16}, the random walk frontiers converge in distribution to the Brownian frontier under the Hausdorff metric $d_H$. Since $d_{Re}$ refines $d_H$, the traces of $\widehat \gamma_1$ and $\widehat \gamma_2$ must coincide as closed sets in $\Rb^2$. Consequently, either $\widehat \gamma_1$ or $\widehat \gamma_2$ contains a backtracking. Here, by backtracking of $\widehat\gamma_i$, we mean the existence of three distinct time parameters $s_1<s_2<s_3$ and a decreasing continuous bijection $\theta:[s_1,s_2]\to[s_2,s_3]$ such that $\widehat\gamma_i(t) = \widehat\gamma_i(\theta(t))$.

    \noindent For $t_0\ge0$ and $A\subseteq \Rb^2$, we introduce the following notation to denote the first hitting times:
    $$
    \tau_A(t_0) =\inf \{ t>t_0:S(t) \in A \},\ \ \inf \emptyset = \infty.
    $$
    For $C_1,C_2>0$, construct the following stopping times (see Fig. \ref{fig:1350}):
    \begin{itemize}
	\item Let $\tau_1 = \tau_{\partial B(x,C_1)}(0)$, $\tau_2 = \tau_{\partial B(x,e^{-4n/5})}(\tau_1)$, and $\sigma_1 = \tau_{\partial B(x,C_2 e^{-n})}(\tau_2)$.
	\item Let $\tau_3 = \tau_{\partial B(x, e^{-4n/5})}(\sigma_1)$, $\tau_4 = \tau_{\partial B(x, C_1 )}(\tau_3)$, and $\sigma _2 = \tau_{\partial B(x,C_1)}(\tau_4)$.
	\item Let $\tau_5 = \tau_{\partial B(x, C_1 )}(\sigma_2)$, $\tau_6 = \tau_{\partial B(x, e^{-4n/5})}(\tau_5)$, and $\sigma_3 = \tau_{\partial B(x, C_2 e^{-n})}(\tau_6)$,
	\item Let $\tau_7 = \tau_{\partial B(x, e^{-4n/5})}(\sigma_3)$ and $\tau_8 = \tau_{\partial B(x, C_1 )}(\tau_7)$.
    \end{itemize}
    This generates four crossing arms $l_1=S[\tau_1,\tau_2],\ l_2=S[\tau_3,\tau_4],\ l_3=S[\tau_5,\tau_6],\ l_4=S[\tau_7,\tau_8]$ within the annulus $\annulus(x,e^{-4n/5},C_1 )$.
    \begin{figure}[H]
    \centering
    \includegraphics[width=0.6\linewidth]{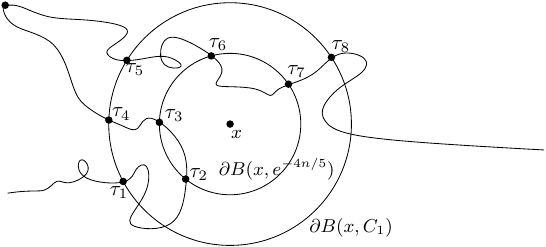}
    \caption{Illustration of backtracking and four arms crossing the annulus. Note that typically these arms intersect each other (but we choose not to draw this scenario to avoid a messy figure).}
    \label{fig:1350}
    \end{figure}
    A disk $B(x,C_1)\subseteq B(0,1)$ is \textit{bad} if the above stopping times are well defined, less than $\infty$ and there exists a permutation $\pi$ such that
	\begin{equation}\label{eq1130}
	    (l_{\pi(1)} \cup l_{\pi(4)}) \cap (l_{\pi(2)} \cup l_{\pi(3)}) = \emptyset\mathrm{\ for\ some\ permutation\ }\pi\ \mathrm{on}\ \{1,2,3,4\}.
	\end{equation}
    According to Lemma 3.6 of \cite{KM10} and using the intersection exponent $\xi(2,2) = 35/12$ from \eqref{eq:intersection exponents}, we obtain the following estimate of the probability that $B(x,C_1)$ is \textit{bad}.
    \begin{equation}
        \Pb\big(B(x,C_1)\ \mathrm{is\ }\textit{bad}\big)\asymp (e^{-4n/5}/C_1)^{\xi(2,2)}\asymp e^{-7n/3}.
    \end{equation}
    The expected number of bad disks in $B(0,1)$ is $O( e^{2n}\times e^{-7n/3}) = O(e^{-n/3})\to 0$ as $n\to\infty$. However, the existence of backtracking of $\wh \gamma_i$ implies that the frontier must enter and exit such a disk twice, creating four arms (of the frontier) satisfying \eqref{eq1130}. 
    Then, one can recognize four arms of the original simple random walk satisfying \eqref{eq1130}, which guarantees the existence of an arbitrary small bad disk. 
    Thus, the expected number of bad disks should have a positive lower bound. Hence, under the KMT coupling \eqref{eq:se1}, backtracking paths are excluded almost surely. 
\end{proof}

\subsection{Proof of Theorem \ref{thm:convergence under np}}\label{sec6.2:Proof of Theorem 1.1}

\subsubsection{Tightness}
In this subsection, we prove the tightness of $\{\gamma_n\}$ with respect to the natural parametrization metric $\rho$. We will make use of the established convergence under reparametrization and convergence of occupation measure.
\begin{thm}
	The sequence $\{\gamma_n\}$ is tight with respect to $\rho$.
\end{thm}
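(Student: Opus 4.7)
The plan is to verify $\rho$-tightness via an Arzela--Ascoli-type criterion for parametrized curves. A family in $\Pc$ is relatively $\rho$-compact provided (i) the time durations $\{t_\gamma\}$ are bounded, (ii) the images lie in a fixed bounded subset of $\Rb^2$, and (iii) the family is equi-continuous, i.e.\ $\omega(\gamma,\eta):=\sup_{|s-t|\le \eta}|\gamma(s)-\gamma(t)|\to 0$ uniformly as $\eta\to 0$. Since all $\gamma_n$ lie in $\ol\Dc$, (ii) is automatic; it therefore suffices to prove (a) tightness of $\{t_{\gamma_n}\}$ as real random variables, and (b) tightness of the moduli of continuity $\omega(\gamma_n,\cdot)$.

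For (a), note that by construction $t_{\gamma_n}$ is $c_1 e^{-4n/3}$ times the number of edges of $\fr(\lambda_n[0,\tau_0])$, which differs from $\|\nu_n\|$ only by a bounded factor stemming from the self-intersection count of the frontier loop. Since $\nu_n\to\nu$ weakly on $\ol\Dc$ by Theorem~\ref{thm:main result 0}, testing against the constant function $\mathbf 1$ gives $\|\nu_n\|\to\|\nu\|$ in distribution, and $\{t_{\gamma_n}\}$ is tight.

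For (b), the main step, I would combine Theorem~\ref{thm:main result 3} (convergence under $d_{Re}$) with Theorem~\ref{thm:main result 0}. Via a joint Skorokhod representation, assume both convergences hold almost surely on one probability space. The limit $\wt\gamma$ is almost surely H\"older continuous under its natural parametrization, by local absolute continuity of the Brownian frontier with respect to $\mathrm{SLE}_{8/3}$ \cite{LSW03,Q21} combined with H\"older regularity of SLE under natural parametrization \cite{LR15}; consequently $\omega(\wt\gamma,\eta)\to 0$ a.s.\ as $\eta\to 0$. To transfer this estimate to $\gamma_n$, given $s<t$ in $[0,t_{\gamma_n}]$ with $t-s<\eta$, let $s'=\alpha_n(s)$, $t'=\alpha_n(t)$ under a reparametrization $\alpha_n$ witnessing the $d_{Re}$-closeness. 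The triangle inequality yields
\[
|\gamma_n(s)-\gamma_n(t)|\le|\gamma_n(s)-\wt\gamma(s')|+|\wt\gamma(s')-\wt\gamma(t')|+|\wt\gamma(t')-\gamma_n(t)|,
\]
where the outer terms vanish uniformly as $n\to\infty$ by the $d_{Re}$-convergence, while the middle term is at most $\omega(\wt\gamma,|t'-s'|)$. The weak convergence $\nu_n\to\nu$, applied to the arcs $\gamma_n[s,t]$ (whose natural-time length $t-s$ equals $\nu_n(\gamma_n[s,t])$ up to a $o(1)$ discrepancy), forces $t'-s'=\mathrm{Cont}_{4/3}(\wt\gamma[s',t'])\le \eta+o(1)$, whence $\omega(\gamma_n,\eta)\le \omega(\wt\gamma,\eta+o(1))+o(1)$, giving the desired tightness.

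The main obstacle will be making the reparametrization $\alpha_n$ and the resulting matching of natural times rigorous in the presence of self-intersections of $\wt\gamma$: a single spatial point on the image of $\wt\gamma$ can correspond to several natural times, so spatial data alone do not pin down $\alpha_n$, and one must use the occupation measures to track natural time consistently and monotonically. I plan to resolve this by constructing $\alpha_n$ as the monotone time-transport map induced by pushing $\nu_n$ forward to $\nu$ along the counterclockwise orientation of the frontier loop, exploiting the non-atomicity of $\nu$ and absorbing a vanishing-probability exceptional event (on which the matching degenerates near self-intersections) into the tightness bound.
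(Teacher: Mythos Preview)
Your part (a) is fine, and the ingredients you assemble for (b) are the right ones, but the key inference $|t'-s'|\le\eta+o(1)$ is circular as written. You invoke the weak convergence $\nu_n\to\nu$ ``applied to the arcs $\gamma_n[s,t]$'', but weak convergence only controls integrals against \emph{fixed} test functions (equivalently, fixed Borel sets), whereas $\gamma_n[s,t]$ and $\wt\gamma[s',t']$ are $n$-dependent sets determined by the very reparametrization $\alpha_n$ whose time-distortion you are trying to bound. Your proposed fix---taking $\alpha_n$ to be the monotone transport $t\mapsto\nu_n(\gamma_n[0,t])$---produces a map that is \emph{not} a priori a $d_{Re}$-witness; showing that this transport map is uniformly close to some $d_{Re}$-optimal reparametrization is essentially equivalent to the $\rho$-tightness you want to prove. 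You would also need uniform control on multiplicities, since $\nu_n(\gamma_n[0,t])\le t$ may be strict at self-intersections of $\gamma_n$. Finally, the appeal to H\"older regularity of $\wt\gamma$ under natural parametrization is an extra input the argument does not actually require.

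The paper avoids this circularity by arguing by contradiction and localizing to a single \emph{fixed} disk. Assuming equi-continuity fails along a subsequence, one extracts $s_n<t_n$ with $t_n-s_n\to 0$ but $\gamma_{k(n)}(s_n)\to x\ne y\leftarrow\gamma_{k(n)}(t_n)$, and writes $x=\wt\gamma(s)$, $y=\wt\gamma(t)$. One then chooses a rational disk $B=B(x_0,r_0)$ whose inner half is entered by $\wt\gamma$ only during $(s,t)$ (this uses that the arc $\wt\gamma[s,t]$ is not entirely retraced by the rest of the curve). By $d_{Re}$-convergence, $\gamma_{k(n)}$ can visit the shrunken disk $B(x_0,2r_0/3)$ only during a $\gamma_{k(n)}$-time interval squeezed between $s_n$ and $t_n$, hence of vanishing length, forcing $\nu_{k(n)}(B(x_0,2r_0/3))\to 0$; but $\nu(B(x_0,2r_0/3))>0$ since $\wt\gamma$ enters $B(x_0,r_0/2)$. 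This contradicts \eqref{eq:convergence of indicator} for the fixed set $B(x_0,2r_0/3)$, which is a legitimate use of the occupation-measure convergence. The localization to one fixed disk is precisely what sidesteps the self-intersection issue you correctly identified.
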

\begin{proof}
    We prove by contradiction. Suppose $\{\gamma_n\}$ is not tight, then there exist $\delta_0>0$, a subsequence $k(n) \to \infty$ and $\varepsilon_n\downarrow 0$ such that
\[
\Pb\big(\mathrm{osc}(\varepsilon_n,\gamma_{k(n)})>\delta_0\big)>\delta_0,
\]
where $\mathrm{osc}(\varepsilon,f) = \sup_{|t-s|<\varepsilon}|f(t)-f(s)|$ denotes the oscillation of $f$ in a small time $\varepsilon$. 
For each $n$, there exist $s_n$ and $t_n$ such that $|s_n-t_n|<\varepsilon_n$ and $|\gamma_{k(n)}(s_n)-\gamma_{k(n)}(t_n)|>\delta_0$. We can pick a subsequence of $(s_n,t_n,\gamma_{k(n)}(s_n),\gamma_{k(n)}(t_n))$ (still indexed by $n$) such that 
\[
s_n\ \mathrm{and}\ t_n\  \mathrm{have\ the\ same\ limit},\ 
\gamma_{k(n)}(s_n)\to x,\mathrm{\ and\ }\gamma_{k(n)}(t_n)\to y,
\]
where $x$ and $y$ are two different points in $\overline{\Dc}$. By Theorem \ref{thm:main result 3} (reparametrization convergence), $x$ and $y$ lie on $\widetilde \gamma$. Otherwise, suppose $x\notin\widetilde\gamma$, then there exists an open set $O$ containing $x$ such that $\widetilde \gamma \cap O=\emptyset$, which leads to a contradiction because this implies $\gamma_n \cap O=\emptyset$ (for $n$ large enough) by convergence under reparametrization, contradicting to the definition of $x$. Let $x=\widetilde \gamma(s)$ and $y=\widetilde \gamma(t)$ with $s<t$.

Select a rational disk $B=B(x_0,r_0)$ such that 
\begin{align}\label{eq:1459}
	\gamma_{k(n)}\cap B(x_0,r_0/2) \neq\emptyset,\ \widetilde \gamma \cap B(x_0,r_0/2) \neq\emptyset,
	\ \widetilde\gamma[0,s]\cap B = \emptyset,\ \mathrm{and}\ \widetilde\gamma[t,t_{\widetilde \gamma}]\cap B= \emptyset,
\end{align}
for all large $n$. 
Since $\widetilde\gamma$ enters $B(x_0,r_0/2)$, its occupation measure satisfies $\nu(B(x_0,2r_0/3))>0$. However, by the KMT coupling \eqref{eq:se1} and conditions \eqref{eq:1459}, we know that $$\gamma_{k(n)}[0,s]\cap B(x_0,2r_0/3)=\emptyset\ \ \mathrm{and}\ \ \gamma_{k(n)}[t,t_{\gamma_{k(n)}}]\cap B(x_0,2r_0/3)=\emptyset$$ for $n$ large enough, implying $\nu_n(B(x_0,2r_0/3))\to 0$. This contradicts \eqref{eq:convergence of indicator}, as $\nu_n(V)\to \nu(V)$ in probability for any rational disk $V$, and every subsequence $\nu_{k(n)}(V)$ contains a further almost surely convergent subsequence. Thus, the original assumption of non-tightness is false. 
\end{proof}

\subsubsection{Uniqueness}
By tightness, each subsequence of $\{\gamma_n\}$ has a further convergent subsequence under the natural parametrization metric $\rho$. It remains to show the uniqueness of the subsequential limit. We begin by excluding a situation where the subsequential limit $\lim_{n \to \infty}\gamma_{k(n)}$ stops at a point for a positive time.
\begin{lemma}
    The subsequential limit $\lim_{n \to \infty}\gamma_{k(n)}$ cannot remain at any single point over a positive time interval. 
\end{lemma}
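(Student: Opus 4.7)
The plan is to argue by contradiction, combining the weak convergence $\nu_n\to\nu$ from Theorem~\ref{thm:main result 0} with the non-atomicity of $\nu$ recalled in Section~\ref{sec1.2:occupation}. Suppose $\gamma_{k(n)}\to\gamma^{*}$ in $(\Pc,\rho)$ along some subsequence and that $\gamma^{*}(s)=x$ for every $s\in[a,b]$ with $b>a$; the goal is to conclude $\nu(\{x\})\ge b-a>0$, which is impossible.

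First I extract a time window on which $\gamma_{k(n)}$ stays near $x$. Unwinding~\eqref{eq:def rho}, for any $\eps>0$ and all sufficiently large $n$ there is a continuous bijection $\alpha_n\colon[0,t_{\gamma_{k(n)}}]\to[0,t_{\gamma^{*}}]$ with $\sup_t|\alpha_n(t)-t|<\eps$ and $\sup_t|\gamma^{*}(\alpha_n(t))-\gamma_{k(n)}(t)|<\eps$. The preimage $I_n:=\alpha_n^{-1}([a,b])$ is an interval of length at least $b-a-2\eps$, and for every $t\in I_n$ one has $|\gamma_{k(n)}(t)-x|\le\eps$ since $\gamma^{*}(\alpha_n(t))=x$. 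I now turn this time-concentration into a mass estimate. Since $\gamma_n$ traverses each frontier edge in time $c_{1}e^{-4n/3}$ and $\nu_n$ places mass $c_{1}e^{-4n/3}$ on each frontier vertex, the Lebesgue time spent by $\gamma_n$ in any Borel set $A$ differs from $\nu_n(A)$ by at most $c_{1}e^{-4n/3}$ times the number of frontier edges crossing $\partial A$; for a disk of fixed radius this discrepancy is $O(e^{-n/3})=o(1)$. Hence, for every $r>\eps$,
\[
\nu_{k(n)}\big(\overline{D_{r}(x)}\big)\ge b-a-2\eps-o(1).
\]

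To close the argument, pass to a further subsequence and invoke Skorokhod's representation theorem so that the joint convergences $\gamma_{k(n)}\to\gamma^{*}$ in $\rho$ and $\nu_{k(n)}\to\nu$ weakly take place almost surely on one probability space; this is permissible because both marginals are tight (the first by the tightness in $\rho$ established just before this lemma, and the second by Theorem~\ref{thm:main result 0}). Applying the Portmanteau lemma to the closed set $\overline{D_{r}(x)}$ then gives $\nu(\overline{D_{r}(x)})\ge\limsup_n\nu_{k(n)}(\overline{D_{r}(x)})\ge b-a-2\eps$; letting first $\eps\downarrow 0$ and then $r\downarrow 0$ produces $\nu(\{x\})\ge b-a>0$, contradicting the almost-sure non-atomicity of $\nu$. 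The only real technical point is arranging this simultaneous coupling of $(\gamma_{k(n)},\nu_{k(n)})$; once that is done, the rest is a short near-identity reparametrization bookkeeping and no input from Sections~\ref{sec3:Continuum frontier disks and frontier Green's function}--\ref{sec5:Convergence of occupation measure} beyond Theorem~\ref{thm:main result 0} itself is required.
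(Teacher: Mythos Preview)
Your proposal is correct and follows essentially the same route as the paper's two-sentence proof: both argue that a pause of the subsequential limit at $x$ would force, via $\nu_{k(n)}\to\nu$, an atom $\nu(\{x\})>0$, contradicting the non-atomicity of $\nu$ from Theorem~\ref{thm:418}. Your write-up simply makes explicit the bookkeeping the paper leaves implicit.

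One small imprecision worth flagging: your claim that the Lebesgue time $\gamma_n$ spends in $A$ differs from $\nu_n(A)$ only by a boundary-edge term is not literally true. The frontier curve $\gamma_n$ is in general not a simple lattice cycle---along dangling branches or at pinch vertices it traverses edges twice and revisits vertices---so (edge-traversals in $A$) can genuinely exceed (distinct frontier vertices in $A$) even away from $\partial A$. However, since every frontier vertex on $\mathbb Z^2$ has degree at most $4$, one still has $\nu_{k(n)}\big(\overline{D_r(x)}\big)\ge c\cdot(\text{time spent in }D_r(x))-o(1)$ for a fixed constant $c>0$, and this weaker inequality is all the contradiction needs. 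Your coupling remark is also on point: the paper is working throughout under an ambient KMT/Skorokhod coupling where both $\gamma_{k(n)}$ and $\nu_{k(n)}$ are functionals of the same random walk, so the joint Skorokhod representation you invoke is immediate.
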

\begin{proof}
    By Theorem \ref{thm:418}, the continuum occupation measure $\nu$ is non-atomic, implying that $\wt\gamma$ cannot stay at any point for a positive time. By convergence of occupation measure, the subsequential limit $\lim_{n \to \infty}\gamma_{k(n)}$ cannot remain at any single point over a positive time interval. 
\end{proof}
Finally, we establish the uniqueness of subsequential limits in the natural parametrization metric $\rho$, thereby proving Theorem \ref{thm:convergence under np}. Our idea is to prove that the subsequential limit must be the Brownian frontier denoted by $\widetilde \gamma$. 
\begin{thm}
	If the subsequential limit $\lim_{n \to \infty}\gamma_{k(n)}$ exists with respect to $\rho$, then it is equal to $\widetilde{\gamma}$.
\end{thm}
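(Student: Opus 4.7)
The plan is to show that any $\rho$-subsequential limit $\gamma'$ must coincide with $\widetilde\gamma$ by matching the two parametrizations via a pushforward identity. First I would combine Theorem \ref{thm:main result 3} (convergence under $d_{Re}$) with the preceding lemma that rules out positive-time plateaus in $\gamma'$ to deduce $\gamma' = \widetilde\gamma\circ\phi$ for some strictly increasing continuous bijection $\phi\colon[0,t_{\gamma'}]\to[0,t_{\widetilde\gamma}]$, so that the goal reduces to $\phi=\mathrm{id}$. Matching endpoints is immediate: $\rho$-convergence gives $t_{\gamma_{k(n)}}\to t_{\gamma'}$, and by construction $t_{\gamma_n}$ equals $c_1 e^{-4n/3}$ times the edge count of $\fr(\lambda_n)$, which agrees with $\|\nu_n\|$ up to $O(e^{-4n/3})$; Theorem \ref{thm:main result 0} then yields $\|\nu_n\|\to\|\nu\|=t_{\widetilde\gamma}$, so $t_{\gamma'}=t_{\widetilde\gamma}$ and $\phi$ is a self-homeomorphism of $[0,t_{\widetilde\gamma}]$.

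The central step is to establish the pushforward identity $\gamma'_{*}\mathrm{Leb}=\nu$. I would introduce the atomic measure $\mu_n$ on $[0,t_{\gamma_{k(n)}}]$ placing mass $c_1 e^{-4k(n)/3}$ at each first-visit time $\tau_x^n:=\inf\{s:\gamma_{k(n)}(s)=x\}$ for $x\in\mathcal{Z}_{k(n)}\cap\fr(\lambda_{k(n)})$, so that $\mu_n([0,t])=\nu_{k(n)}(\gamma_{k(n)}[0,t])=:M_n(t)$ and the pushforward satisfies $(\gamma_{k(n)})_{*}\mu_n=\nu_{k(n)}$. A direct counting bound gives $\sup_t|M_n(t)-t|\to 0$: each edge traversal adds at most $c_1 e^{-4k(n)/3}$ of new mass (exactly this when the terminal vertex is fresh, zero on a revisit), so the deficit $t-M_n(t)$ is non-decreasing in $t$ and bounded by $t_{\gamma_{k(n)}}-\|\nu_{k(n)}\|\to 0$. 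Hence $\mu_n\to\mathrm{Leb}|_{[0,t_{\widetilde\gamma}]}$ weakly. Combined with the uniform convergence $f\circ\gamma_{k(n)}\to f\circ\gamma'$ on the natural parameter interval (from $\rho$-convergence and continuity of $f$) for bounded continuous $f$, one obtains
\begin{equation*}
\int f\,d((\gamma_{k(n)})_{*}\mu_n)=\int f\circ\gamma_{k(n)}\,d\mu_n\longrightarrow\int_0^{t_{\widetilde\gamma}}f(\gamma'(s))\,ds=\int f\,d(\gamma'_{*}\mathrm{Leb}),
\end{equation*}
while $(\gamma_{k(n)})_{*}\mu_n=\nu_{k(n)}\to\nu$ by Theorem \ref{thm:main result 0}, giving $\gamma'_{*}\mathrm{Leb}=\nu$. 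The identity $\widetilde\gamma_{*}\mathrm{Leb}=\nu$ is precisely the defining property of the natural parametrization of $\widetilde\gamma$, so $\widetilde\gamma_{*}(\phi_{*}\mathrm{Leb})=\widetilde\gamma_{*}\mathrm{Leb}$. Since $\widetilde\gamma$ is locally absolutely continuous with respect to $\mathrm{SLE}_{8/3}$ (which is simple) and is therefore essentially injective, canceling $\widetilde\gamma_{*}$ yields $\phi_{*}\mathrm{Leb}=\mathrm{Leb}$, which together with $\phi$ being an increasing homeomorphism forces $\phi=\mathrm{id}$.

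The main obstacle I anticipate is justifying the essential injectivity of $\widetilde\gamma$ that allows canceling the pushforward $\widetilde\gamma_{*}$. A cleaner alternative route bypassing this entirely is to prove $\nu(\gamma'[0,t])=t$ for every $t$ directly: the $\limsup$ inequality $\limsup_n M_n(t)\le\nu(\gamma'[0,t+\epsilon])$ follows from weak convergence and the inclusion $\gamma_{k(n)}[0,t]\subseteq\{z:\dist(z,\gamma'[0,t+\delta_n])\le\delta_n\}$ supplied by $\rho$-convergence, together with outer regularity; the $\liminf$ $\liminf_n M_n(t)\ge\nu(\gamma'[0,t-\epsilon])$ is more delicate because $\nu_n$ lives on discrete vertices, and would be handled by covering $\gamma'[0,t-\epsilon]$ by finitely many dyadic boxes $V\in\Vc$ with $\nu(\partial V)=0$, applying \eqref{eq:convergence of indicator} on each, and observing via the Hausdorff sandwich that any such $V$ with $\nu(V)>0$ must be entered by $\gamma_{k(n)}[0,t]$ for large $n$. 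Once $\nu(\gamma'[0,t])=t$ is in hand, the natural parametrization of $\widetilde\gamma$ gives $\phi(t)=\mathrm{Cont}_{4/3}(\widetilde\gamma[0,\phi(t)])=\nu(\gamma'[0,t])=t$, completing the argument.
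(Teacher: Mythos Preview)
Your approach differs from the paper's. The paper argues by contradiction: assuming the limit $\widehat\gamma$ differs from $\widetilde\gamma$, it locates a rational disk $B$ on which the two curves spend unequal amounts of time, translates this into a discrepancy between $\nu_{k(n)}(B_{1\pm\eps})$ and $\nu(B_{1\pm\eps})$, and contradicts \eqref{eq:convergence of indicator}. You instead build a direct pushforward identity $\gamma'_*\mathrm{Leb}=\nu$ via atomic measures $\mu_n$ on the time axis and then argue that the reparametrization $\phi$ must be the identity. Your route is more structural and makes the mechanism explicit; the paper's is terser but rests on exactly the same two inputs (Theorem~\ref{thm:main result 3} and Theorem~\ref{thm:main result 0}).

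There is, however, a real gap, and it is not the injectivity issue you flagged. Your claim that $t_{\gamma_n}$ ``agrees with $\|\nu_n\|$ up to $O(e^{-4n/3})$'' is \emph{not} by construction. The curve $\gamma_n$ traces the boundary of the unbounded component of $\mathbb{C}\setminus\lambda_n$, and this boundary revisits a lattice vertex (and traverses an edge from both sides) wherever the hull of $\lambda_n$ has a pinch point. At such points the number of edge traversals strictly exceeds the number of distinct frontier vertices, and there is no a priori reason the excess is $O(1)$. What you actually need is that the excess is $o(e^{4n/3})$, equivalently $t_{\gamma_{k(n)}}-\|\nu_{k(n)}\|\to 0$, so that $\sup_t|M_n(t)-t|\to 0$; this is plausible via an arm-exponent argument (a frontier pinch point forces an event with exponent strictly larger than $\alpha=2/3$), but it is a genuine estimate, not a tautology. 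Your ``alternative route'' does not sidestep this either: the sandwich bounds yield $M_n(t)\to\nu(\gamma'[0,t])$, but you still need $M_n(t)\to t$ to conclude $\nu(\gamma'[0,t])=t$. For what it is worth, the paper's own Case~2 makes the same implicit identification of $\int\ind{\gamma_{k(n)}\in B}\,ds$ with $\nu_{k(n)}(B)$, so the issue is not peculiar to your argument --- but you should not present it as immediate.
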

\begin{proof}
    Denote by $E$ the event that there exists a subsequence $\{\gamma_{k_n}\}$ converging to a limit $\widehat \gamma \neq\widetilde \gamma$ under the natural parametrization metric $\rho$. Assume for contradiction that $\Pb(E)>0$. By Theorem \ref{thm:main result 3}, $\widehat \gamma$ and $\widetilde \gamma$ share identical trajectories but differ in temporal parametrization.
    
\noindent \textbf{Case 1.} There exists a rational disk $B = B(x_0,r_0)$ such that 
\[\widetilde \gamma \cap B(x_0,r_0/2) \neq\emptyset \mathrm{\ \ and\ \ }
\int\ind{\widetilde \gamma(s)\in B}ds>\int\ind{\widehat \gamma(s)\in B}ds.
\]
By \eqref{eq1:nu(V)} and \eqref{eq:frontier green's function}, there is no occupation measure on $\partial B$. We can pick $\eps>0$ such that 

\begin{align*}
	\int\ind{\widetilde \gamma(s)\in B_{1-\eps}}ds>(1+\eps)\int\ind{\widehat \gamma(s)\in B}ds
	\ge (1+\eps)\int\ind{ \gamma_{k(n)}(s)\in B_{1-\eps}}ds,
\end{align*}
where $B_{1-\eps} = B(x_0,(1-\eps)r_0)$, implying $\nu_{k(n)}(B_{1-\eps})<\widetilde \nu(B_{1-\eps})/(1+\eps)$.

\noindent \textbf{Case 2.} There exists a rational disk $B$ such that 
\[
\int\ind{\widetilde \gamma(s)\in B}ds<\int\ind{\widehat \gamma(s)\in B}ds.
\]
Again, since there is no occupation measure on $\partial B$, we can pick $\eps>0$ such that

\begin{align*}
(1+\eps)\int\ind{\widetilde \gamma(s)\in B_{1+\eps}}ds&<\int\ind{\widehat \gamma(s)\in B}ds<\int\ind{\gamma_{k(n)}\in B_{1+\eps}}ds,
\end{align*}
where $B_{1+\eps} = B(x_0,(1+\eps)r_0)$, implying $\nu_{k(n)}(B_{1+\eps})>(1+\eps)\widetilde \nu(B_{1+\eps})$.

On the event $E$, either case produces a disk $V$ (dependent on the sample), where any subsequence of $\nu_{k(n)}(V)$ fails to converge to $\widetilde \nu(V)$. This violates \eqref{eq:convergence of indicator}, which guarantees almost sure convergence of a subsequence of $\nu_{k(n)}(V)$ to $\widetilde \nu(V)$. 
\end{proof}

\appendix
\section{Proof of Proposition \ref{prop:Dnf}}\label{appen:A}
\begin{proof}[Proof of Proposition \ref{prop:Dnf}]
We decompose the target event $ D_{n}(\ol{\initialconfig})\,\cap\, \{ \ol\initialconfig\oplus\ol {S}[0,\tau_{m/2}]\notin \mathrm{NICE}_{m/2} \} $ into five sub-events:
\begin{itemize}
    \item $E_m^1:=\{0\notin \fr(\mathrm{THICK}_m(\initialconfig_1\oplus S_1[0,\tau_n]))\}$ happens for some $29n/60\le m\le n/2$.
    \item $E_m^2:=\{0\notin \fr(\mathrm{THICK}_m(\initialconfig_2\oplus S_2[0,\tau_n]))\}$ happens for some $29n/60\le m\le n/2$.
    \item $E_m^3:=\{0\notin\fr(\mathrm{THICK}_m(\initialconfig_1\oplus S_1[0,\tau_n])\cup(\initialconfig_2\oplus S_2[0,\tau_n]))\}$ happens for some $29n/60\le m\le n/2$.
    \item $E_m^4:=\{0\notin\fr((\initialconfig_1\oplus S_1[0,\tau_n])\cup \mathrm{THICK}_m(\initialconfig_2\oplus S_2[0,\tau_n]))\}$ happens for some $29n/60\le m\le n/2$.
    \item $E_{m,m'}^5:=\{0\notin\fr(\mathrm{THICK}_m(\initialconfig_1\oplus S_1[0,\tau_n])\cup\mathrm{THICK_{m'}}(\initialconfig_2\oplus S_2[0,\tau_n]))\}$ happens for some $29n/60\le m,m'\le n/2$.
\end{itemize}
The following three auxiliary lemmas demonstrate that the probabilities of the events $D_n(\ol\initialconfig)\cap E_m^i$ and $D_n(\ol\initialconfig)\cap E_{m,m'}^5$ are negligible relative to the probability of $D_n(\ol\initialconfig)$. By summing over all $m,m'$ and combining the contributions from the five cases, we thereby complete the proof.
\end{proof}
\begin{lemma}\label{lem:A.1}
    There exist constants $c,u>0$, such that for all $n-2\ge m-1\ge 10l$ and $\ol\initialconfig \in\Yc_l$, 
    \begin{equation}
        \Pb\big(D_n(\ol\initialconfig)\cap E_m^1\big)\le ce^{-um}e^{-\alpha(n-l)}.
    \end{equation}
\end{lemma}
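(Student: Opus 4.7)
The plan is to decompose $\Pb(D_n(\ol\initialconfig)\cap E_m^1)$ at scale $m$ and exhibit an extra $e^{-um}$ decay on top of the standard $e^{-\alpha(n-l)}$ non-disconnection cost. The key geometric observation is the following: on $D_n(\ol\initialconfig)\cap E_m^1$, the $e^{15m/16}$-thickening of path $1$ restricted to the annulus $\Ac_m:=\Bc_m\setminus\Bc_{m-1}$ already separates $0$ from $\infty$, while the untouched pair does not. Since path $1$ alone cannot fully separate $0$ (this would contradict $D_n(\ol\initialconfig)$), it must carry an ``almost-loop'' in $\Ac_m$ with a gap of diameter at most $2e^{15m/16}$, and path $2$ is then forced to thread through that gap in order for the global non-disconnection to hold.

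To quantify this, I would first apply the separation lemma (Lemma \ref{lem:rw-disc-sep}) together with \eqref{eq:rw-disc-exp} at scale $m-1$ to restrict to well-separated configurations $\ol\xi$ at scale $m-1$, at a cost of $e^{-\alpha(m-1-l)}$. By the Markov property, it remains to bound, uniformly over such $\ol\xi$, the conditional probability $\Pb^{\ol\xi}(D_n\cap E_m^1)\lesssim e^{-um}e^{-\alpha(n-m)}$. To this end, cover $\Ac_m$ by $O(e^{m/16})$ dyadic sub-squares of side length comparable to $e^{15m/16}$; on $E_m^1$ at least one such square $Q$ must contain the gap in path $1$, so path $2$ is constrained to enter the $e^{15m/16}$-neighborhood of $Q$. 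A Beurling-type / gambler's-ruin estimate applied to path $2$ at its well-separated entry point at scale $m-1$ bounds the probability of hitting such a thin target by $e^{-u'm}$ with $u'>1/16$. Summing over the $O(e^{m/16})$ choices of $Q$ yields the $e^{-um}$ factor, and a final application of the separation lemma at scale $m+1$ combined with \eqref{eq:rw-disc-exp} handles the continued non-disconnection from $m+1$ to $n$ at cost $e^{-\alpha(n-m)}$.

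The main obstacle is making the Beurling-type threading bound precise while respecting the global non-disconnection constraint on path $2$. Path $2$ must simultaneously avoid disconnecting jointly with path $1$, pass through a prescribed gap of relative width $e^{-m/16}$ at scale $m$, and keep non-disconnecting at larger scales; in addition, path $1$'s gap need not be unique or geometrically well-localized. The cleanest way to disentangle these constraints is to condition on the first hitting time by path $2$ of the $e^{15m/16}$-neighborhood of the dyadic square $Q$, apply a harmonic-measure / one-arm estimate (in the spirit of Lemma \ref{one-arm disconnection exponent}) to bound this hitting probability, and then restart via the Markov property. The numerical parameters $15/16$ for the thickening exponent and the scale range $29n/60\le m\le n/2$ arising in Proposition \ref{prop:Dnf} are chosen precisely so that $u'-1/16>0$ with a positive margin, yielding the required $u>0$ after the union bound.
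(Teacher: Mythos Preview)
Your central geometric claim is wrong. On $D_n(\ol\initialconfig)\cap E_m^1$ it is true that path~$1$ carries an ``almost-loop'' in the annulus $\Bc_m\setminus\Bc_{m-1}$ with a gap of width $\lesssim e^{15m/16}$, and it is true that the \emph{corridor} from $0$ to $\infty$ in the complement of $(\initialconfig_1\cup S_1)\cup(\initialconfig_2\cup S_2)$ must pass through that gap. But path~$2$ is part of the obstacle, not the corridor: there is no reason whatsoever for $S_2$ to enter the gap. Random walks are allowed to intersect, so $S_2$ may cross $S_1$'s near-loop at any point of the annulus while leaving the gap unobstructed, and the global non-disconnection still holds. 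Your Beurling/hitting argument on $S_2$ therefore bounds a quantity that is not forced by the event.

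The extra decay actually comes from the continuation of path~$1$. Let $\tau^*\in[\tau_{m-1},\tau_m]$ be the first time at which $\initialconfig_1\cup S_1[0,\tau^*]\cup D(S_1(\tau^*),ce^{15m/16})$ disconnects $0$ from $\infty$. Since on $D_n(\ol\initialconfig)$ path~$1$ alone does not disconnect, the remaining piece $S_1[\tau^*,\tau_n]$ must escape from $D(S_1(\tau^*),ce^{15m/16})$ to, say, $\partial D(S_1(\tau^*),ce^{31m/32})$ without disconnecting the small disk from $\infty$. This is a one-arm non-disconnection event in the sense of Lemma~\ref{one-arm disconnection exponent}, contributing a factor $\lesssim e^{-m/128}$ with no union bound over squares needed. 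Sandwiching this between the two-arm costs $e^{-\alpha(m-1-l)}$ and $e^{-\alpha(n-m-1)}$ (as in your outer scaffolding) gives the lemma.

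As a secondary issue, even if your threading claim were valid the numerics would not close: the Beurling estimate has exponent $1/2$, so hitting a target of relative size $e^{-m/16}$ from a well-separated point at scale $e^{m-1}$ costs at best $e^{-m/32}$, i.e.\ $u'=1/32<1/16$, and the union bound over $O(e^{m/16})$ squares diverges rather than decays.
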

\begin{proof}
Since $\initialconfig_1 \oplus S_1[0,\tau_n]$ does not disconnect the origin, while the thickened curve does, there exists a small exit of diameter $ce^{15m/16}$, where the origin is connected to $\infty$, and this exit would close if the SRW path is thickened. Thus, we consider the stopping time $\tau^*>\tau_{m-1}$ such that $S_1[0,\tau^*] \cup D(S_1(\tau^*),ce^{15m/16})$ disconnects the origin from $\infty$. The exponential penalty comes from the fact that the single arm $S_1[\tau^*,\tau_n]$ should not disconnect this circle $\partial D(S_1(\tau^*),ce^{15m/16})$ (otherwise, $D_n(\ol\initialconfig)$ will not happen). More precisely, we do the following decomposition.
\begin{itemize}
    \item Let $(\eta_1,\eta_2)$ be a pair of non-disconnecting paths starting from the endpoint of $\initialconfig_1$ and $\initialconfig_2$ and stopped when reaching the circle $\partial\Dc_{m-1}$. By \eqref{eq:rw-disc-exp}, the total mass is $\asymp e^{-\alpha(m-1-l)}$.
    \item Let $\eta_3^1$ be a path starting from the endpoint of $\eta_1$, and stopped when there exists a stopping time $\tau^*$ such that $\initialconfig_1\cup\eta_1\cup\eta_3^1\cup D(S_1(\tau^*),ce^{15m/16})$ disconnects the origin. 
    The total mass is $\lesssim 1$. 
    \item Let $\eta_3^2$ be a one-arm non-disconnecting path starting from the endpoint of $\eta_3^1$ and stopped when reaching the circle $\partial D(z,ce^{31m/32})$. By \eqref{eq:one-arm event}, the total mass of $\eta_3^2$  not disconnecting $\partial D(z,ce^{15m/16})$ from $\infty$ is $\lesssim e^{-m/128}$.
    \item Let $\eta_3^3$ be a path starting from the endpoint of $\eta_3^2$ and stopped when reaching the circle $\partial \Dc_{m+1}$. The total mass is $\lesssim 1$.
    \item Let $\eta_4$ be a path starting from the endpoint of $\eta_2$ and stopped when reaching the circle $\partial \Dc_{m+1}$. The total mass is $\lesssim 1$.
    \item Let $(\eta_5,\eta_6)$ be a pair of non-disconnecting paths starting from the endpoint of $\eta_3^3$ and $\eta_4$ and stopped when reaching the circle $\partial\Dc_{n}$. By \eqref{eq:rw-disc-exp}, the total mass of $\eta_5\cup\eta_6$  not disconnecting the origin from $\infty$ is $\asymp e^{-\alpha (n-m-1)}$.
\end{itemize}
Multiplying the above factors, the total mass of case 1 is $\asymp e^{-\alpha(m-1-l)}\times e^{-m/128}\times e^{-\alpha(n-m-1)}$. 
We conclude the proof.
\end{proof}
\begin{figure}[H]
    \centering
    \includegraphics[width=0.5\linewidth]{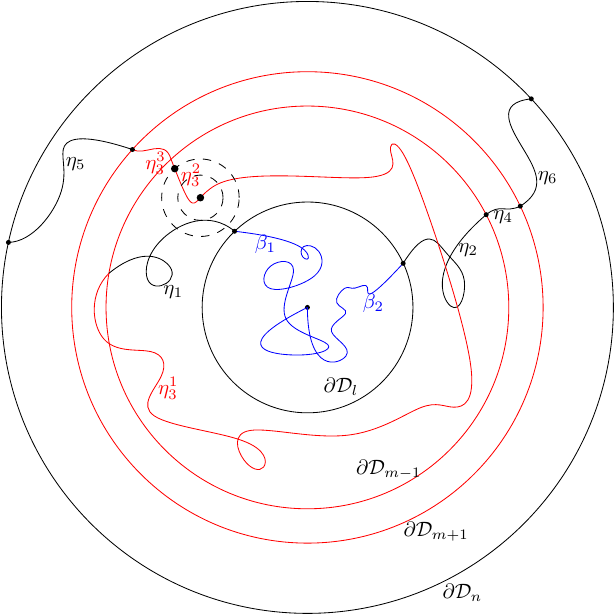}
    \caption{Lemma A.1.\ The radii of two dashed circles are $ce^{15m/16}$ and $ce^{31m/32}$ respectively.}
    \label{fig:3.11 case1.jpg}
\end{figure}

\begin{lemma}
    There exist constants $c,u>0$, such that for all $n-2\ge m-1\ge 10l$ and $\ol\initialconfig \in\Yc_l$, 
    \begin{equation}
    \Pb\big(
D_n(\ol\initialconfig)\cap (E_m^1)^c \cap E_m^3\big)
\le c e^{-um}e^{-\alpha(n-l)}.
    \end{equation}
\end{lemma}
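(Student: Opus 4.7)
The plan is to follow the path-decomposition template established in the proof of Lemma \ref{lem:A.1}, adapted to the fact that here the ``closing'' of the disconnection is facilitated by the untouched second path $\initialconfig_2\cup S_2[0,\tau_n^2]$ in combination with the thickening of $S_1$, rather than by the thickening of $S_1$ alone. On the event $D_n(\ol\initialconfig)\cap(E_m^1)^c\cap E_m^3$, the assumption $(E_m^1)^c$ produces a continuous passage $\gamma$ from $0$ to $\infty$ avoiding $\mathrm{THICK}_m(\initialconfig_1\oplus S_1[0,\tau_n^1])$, and by $E_m^3$ this $\gamma$ must cross $\initialconfig_2\cup S_2[0,\tau_n^2]$. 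Conversely, $D_n$ yields a passage $\gamma'$ from $0$ to $\infty$ avoiding all four untouched pieces; since $E_m^3$ holds, $\gamma'$ must enter the thickening annulus of $S_1$'s middle arc, and in particular come within distance $e^{15m/16}$ of some point of $S_1[\tau_{m-1}^1,\tau_m^1]$. Together these facts force the existence of a stopping time $\tau^*\in[\tau_{m-1}^1,\tau_m^1]$ at which the ``witness disk'' $D(S_1(\tau^*),ce^{15m/16})$, together with $\initialconfig_1\cup S_1[0,\tau^*]\cup\initialconfig_2\cup S_2[0,\tau_n^2]$, disconnects $0$ from $\infty$, while the same set without the disk does not (by $D_n$).

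Given $\tau^*$, I would decompose the trajectories into six pieces exactly as in Lemma \ref{lem:A.1}. A non-disconnecting pair $(\eta_1,\eta_2)$ from the endpoints of $\initialconfig_1,\initialconfig_2$ out to $\partial\Dc_{m-1}$ contributes total mass $\asymp e^{-\alpha(m-1-l)}$ by \eqref{eq:rw-disc-exp}. An $S_1$-extension $\eta_3^1$ from $\partial\Dc_{m-1}$ up to time $\tau^*$ contributes $\lesssim 1$. The critical piece is an $S_1$-extension $\eta_3^2$ from $\partial D(S_1(\tau^*),ce^{15m/16})$ to $\partial D(S_1(\tau^*),ce^{31m/32})$ which, by the same reasoning as in Lemma \ref{lem:A.1}, cannot disconnect the witness disk from infinity without contradicting $D_n$ (otherwise the $S_1$-continuation would itself supply the missing barrier at scale $e^{15m/16}$, producing an untouched disconnection of the origin); by \eqref{eq:one-arm event} its mass is $\lesssim e^{-m/128}$. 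A further $S_1$-extension $\eta_3^3$ to $\partial\Dc_{m+1}$ and an $S_2$-extension $\eta_4$ from $\partial\Dc_{m-1}$ to $\partial\Dc_{m+1}$ each contribute $\lesssim 1$. Finally, a non-disconnecting pair $(\eta_5,\eta_6)$ to $\partial\Dc_n$ contributes $\asymp e^{-\alpha(n-m-1)}$ by \eqref{eq:rw-disc-exp}. Multiplying these factors yields the advertised bound, with $u=1/128$ after absorbing polynomial overhead (from summing over the position of $\tau^*$) into a slightly smaller $u$.

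The main obstacle is the topological bookkeeping at the one-arm step. Unlike Lemma \ref{lem:A.1}, where only $S_1$ participates in the ``closing'' and it is unambiguous that the continuation of $S_1$ must avoid disconnecting the witness disk, here the passage closed by the witness disk may topologically involve $\initialconfig_2\cup S_2$, so a priori either the $S_1$-continuation from $\tau^*$ or an arm of $S_2$ passing near $D(S_1(\tau^*),ce^{15m/16})$ could be the arm constrained by the one-arm non-disconnection. A finite union bound over the possible topological configurations (which arm supplies the one-arm non-disconnection, and on which side of $S_1$ the witness disk sits relative to $S_2$), combined with the reverse separation lemma (Remark \ref{rmk:329}) to decouple the relevant segments, handles all cases, each contributing a bound of the same order $e^{-\alpha(n-l)}e^{-um}$.
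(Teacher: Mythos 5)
Your overall strategy is the right one and does match the paper's template from Lemma \ref{lem:A.1}: locate a witness disk near $S_1[\tau_{m-1},\tau_m]$ whose insertion closes the passage, freeze the two walks at $\partial\Dc_{m-1}$ and $\partial\Dc_{m+1}$ to harvest the two $e^{-\alpha(\cdot)}$ factors from \eqref{eq:rw-disc-exp}, and extract the extra $e^{-m/128}$ from a one-arm non-disconnection of the witness disk via \eqref{eq:one-arm event}. However, you omit a preliminary reduction that the paper uses precisely to dispose of the topological ambiguity you flag at the end. Before the decomposition, the paper first handles $E_1=\{S_1[0,\tau_{m-1}]\cap S_2[0,\tau_{m-1}]=\emptyset\}$: by the strong Markov property and the intersection exponent $\xi(1,1)=5/4>\alpha$,
\[
\Pb\big(D_n(\ol\initialconfig)\cap E_1\big)\le ce^{-\xi(1,1)(m-1-l)}e^{-\alpha(n-m+1)}\le ce^{-um}e^{-\alpha(n-l)},
\]
so the non-intersecting case is already negligible. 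On the complement, $S_1$ and $S_2$ meet inside $\Bc_{m-1}$, so the union of the two (unthickened) walk paths leaves only \emph{one} opening from $0$ to $\infty$ at scale $m$; the witness disk that closes it is then canonically placed, and the arm subject to the one-arm penalty is unambiguous.

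Without this reduction, your closing remark that ``a finite union bound over the possible topological configurations $\ldots$ handles all cases'' is not self-evident: when $S_1$ and $S_2$ are disjoint in $\Bc_{m-1}$ there can be two openings, and it is no longer clear which arm is constrained by the one-arm estimate nor that the number of configurations stays bounded uniformly in $m$. The cleanest fix is exactly the $E_1$ estimate above, which you should add up front. One further point to tighten: you define $\tau^*$ via disconnection by $\initialconfig_1\cup S_1[0,\tau^*]\cup\initialconfig_2\cup S_2[0,\tau_n^2]\cup D(S_1(\tau^*),ce^{15m/16})$, i.e.\ conditionally on the \emph{entire} $S_2$, but your mass factorization needs $S_2$ truncated at $\partial\Dc_{m+1}$ (the bridge $\eta_4$) so that the outer non-disconnecting pair $(\eta_5,\eta_6)$ stays independent; the paper accordingly defines the witness-disk stopping time jointly with the $S_2$-bridge $\eta_4$ only. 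Once you incorporate the $E_1$ reduction and phrase the witness-disk event with respect to $\eta_4$ rather than the full $S_2$, the rest of your decomposition and the final bound $e^{-\alpha(m-1-l)}\cdot e^{-m/128}\cdot e^{-\alpha(n-m-1)}$ agree with the paper.
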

\begin{proof}
Define $E_1=\{S_1[0,\tau_{m-1}]\cap S_2[0,\tau_{m-1}] = \emptyset\}$, then 
$$\Pb\big(D_n(\ol\initialconfig) \cap E_1\big)\le ce^{-\xi(1,1)(m-1-l)}e^{-\alpha(n-m+1)}\le ce^{-um}e^{-\alpha(n-l)},$$
where we use the independence of $S_i[0,\tau_{m-1}]$ and $S_i[\tau_{m-1},\tau_{n}]$ by the strong Markov property, and $\xi(1,1)=5/4$ is the intersection exponent. Thus, it suffices to consider the non-disconnecting paths with only one opening. Analogous to Lemma \ref{lem:A.1}, the exponential penalty also comes from the small exit generated by $S_1$ and $S_2$. More precisely, we do the following path decomposition.
\begin{itemize}
    \item Let $(\eta_1,\eta_2)$ be a pair of non-disconnecting but intersected paths starting from the endpoint of $\initialconfig_1$ and $\initialconfig_2$ and stopped when reaching the circle $\partial \Dc_{m-1}$. By \eqref{eq:rw-disc-exp}, the total mass is $\asymp e^{-\alpha(m-1-l)}$.
    \item Let $(\eta_5,\eta_6)$ be a pair of non-disconnecting paths starting from the circle $\partial \Dc_{m+1}$ and stopped when reaching the circle $\partial \Dc_n$. By \eqref{eq:rw-disc-exp}, the total mass of $\eta_5\cup\eta_6$ not disconnecting the origin from infinity is $\lesssim e^{-\alpha({n-m-1})}$.
    \item Let $\eta_4$ be a bridge starting from the endpoint of $\eta_2$ and stopped when reaching the starting point of $\eta_6$. The total mass is $\lesssim 1$.
    \item Let $\eta_3^1$ be a path starting from the endpoint of $\eta_1$ and stopped when there exists a disk $D(z,ce^{15m/16})$ such that $(\initialconfig_1\cup\eta_1\cup\eta_3^1)\cup(\initialconfig_2\cup\eta_2\cup\eta_4)\cup D(z,ce^{15m/16})$ disconnects the origin from infinity. The total mass is $\lesssim 1$.
    \item Let $\eta_3^2$ be a one-arm non-disconnection path starting from the endpoint of $\eta_3^1$ and stopped when reaching the circle $\partial D(z,ce^{31m/32})$. By \eqref{eq:one-arm event}, the total mass of $\eta_3^2$  not disconnecting $D(z,ce^{15m/16})$ from infinity is $\lesssim e^{-m/128}$.
    \item Let $\eta_3^3$ be a bridge starting from the endpoint of $\eta_3^2$ and stopped when reaching the starting point of $\eta_5$. The total mass is $\lesssim 1$. 
\end{itemize}
Multiplying the above factors, the total mass is $\asymp e^{-\alpha(m-1-l)}\times e^{-m/128}\times e^{-\alpha(n-m-1)}$. We obtain an extra cost for $\Pb(D_n(\ol\initialconfig)\cap (E_m^1)^c \cap E_m^3)$ and conclude the proof.
\end{proof}
\begin{figure}[H]
    \centering
    \includegraphics[width=0.5\linewidth]{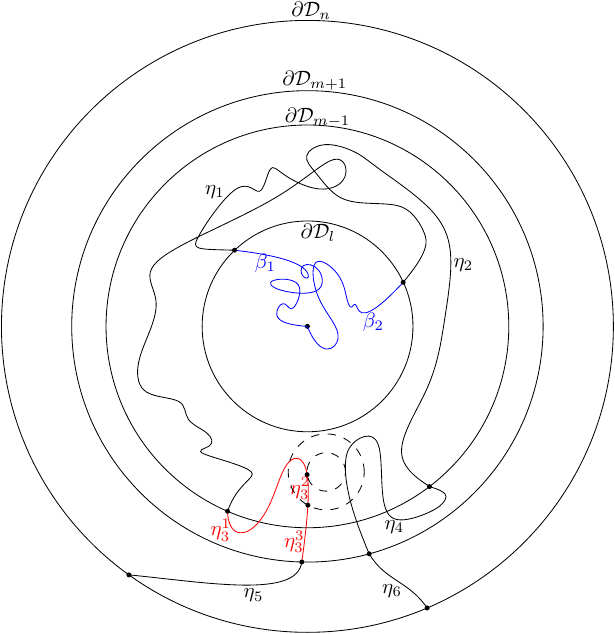}
    \caption{Lemma A.2.\ The radii of two dashed circles are $ce^{15m/16}$ and $ce^{31m/32}$ respectively.}
    \label{fig:3.12.jpg}
\end{figure}
\begin{lemma}There exist constants $c,u>0$, such that for all $n-2\ge m'-1\ge m-1\ge 10l$ with $m'\asymp m$ and $\ol\initialconfig \in\Yc_l$,
    \begin{equation}
        \Pb\big(
D_n(\ol\initialconfig)\cap (E_m^1)^c \cap (E_m^2)^c \cap (E_m^3)^c \cap (E_m^4)^c \cap E_{m,m'}^5
\big)
\le c e^{-um}e^{-\alpha(n-l)}.
    \end{equation}
\end{lemma}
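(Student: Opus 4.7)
The plan is to follow the path-decomposition strategy of Lemmas A.1 and A.2, but now to extract \emph{two} simultaneous near-disconnection penalties, one for each path $S_i$ at its own thickening scale. Under the hypotheses $(E_m^i)^c$ for $i=1,2,3,4$ combined with $E_{m,m'}^5$, the disconnection of $0$ from $\infty$ must be produced by the combined thickening of both paths, so on this event each path contributes a small exit (of radius $\asymp e^{15m/16}$ for $S_1$, $\asymp e^{15m'/16}$ for $S_2$) whose presence is jointly necessary to pinch off the last opening to $\infty$. Each such near-disconnection event is rare, quantified by the one-arm estimate \eqref{eq:one-arm event}.

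First I would establish the topological extraction: on the event under consideration there exist stopping times $\tau_1^* \in [\tau_{m-1}^1, \tau_m^1]$, $\tau_2^* \in [\tau_{m'-1}^2, \tau_{m'}^2]$, and points $z_i := S_i(\tau_i^*)$, such that
\[
\initialconfig_1 \cup S_1[0,\tau_1^*] \cup \initialconfig_2 \cup S_2[0,\tau_2^*] \cup D(z_1, ce^{15m/16}) \cup D(z_2, ce^{15m'/16})
\]
disconnects $0$ from $\infty$, with both disks genuinely required. Then I would decompose the two paths accordingly: a non-disconnecting pair $(\eta_1,\eta_2)$ from the endpoints of $\ol\initialconfig$ to $\partial\Dc_{m-1}$ of mass $\asymp e^{-\alpha(m-1-l)}$; continuations of $S_1, S_2$ until $\tau_1^*, \tau_2^*$, each of mass $\lesssim 1$; then after $\tau_i^*$ apply Lemma~\ref{one-arm disconnection exponent} to $S_i$ across the annulus $\annulus(z_i, ce^{15m_i/16}, ce^{31m_i/32})$ (with $m_1 = m$, $m_2 = m'$), each yielding $\lesssim e^{-m/128}$ and $\lesssim e^{-m'/128}$; short bridges of mass $\lesssim 1$ out to $\partial\Dc_{m'+1}$; and a final non-disconnecting pair from $\partial\Dc_{m'+1}$ to $\partial\Dc_n$ of mass $\asymp e^{-\alpha(n-m'-1)}$. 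Multiplying and using $m\asymp m'$ gives
\[
\Pb(\cdots) \lesssim e^{-\alpha(m-1-l)} \cdot e^{-m/128} \cdot e^{-m'/128} \cdot e^{-\alpha(n-m'-1)} \lesssim e^{-um}\,e^{-\alpha(n-l)}.
\]

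The main obstacle will be the topological extraction step: one must verify rigorously that under the exclusions $(E_m^i)^c$, $i=1,\dots,4$, \emph{both} disks are genuinely necessary for the disconnection. The logic is that if the $S_1$-disk alone (combined with $S_2$ in either thickened or unthickened form at scale $m$) already sufficed to disconnect, we would land in $E_m^1$ or $E_m^3$; symmetrically, the $S_2$-disk alone would place us in $E_m^2$ or $E_m^4$. Excluding all four cases forces joint dependence on both disks. A secondary subtlety is the independence of the two one-arm penalties: since $m \le m'$ and $m' \asymp m$, the annuli around $z_1$ and $z_2$ live at comparable but distinct dyadic locations, so successive application of the strong Markov property to $S_1$ and then $S_2$ decouples the two penalties, producing the product $e^{-m/128}\cdot e^{-m'/128}$. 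In the borderline case $m = m'$, where the two disks may be spatially close, the geometry still permits the two one-arm events to be imposed independently modulo a negligible overlap, since $e^{15m/16}$ is exponentially smaller than $e^m$. Once the topological extraction is in place, the remaining mass computations follow the template of Lemmas A.1 and A.2 verbatim.
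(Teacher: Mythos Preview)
Your approach overreaches and contains a real gap. The paper's proof first performs the same $\xi(1,1)$ reduction as in Lemma~A.2 (discarding $\{S_1[0,\tau_{m-1}]\cap S_2[0,\tau_{m-1}]=\emptyset\}$ at cost $e^{-\xi(1,1)(m-1-l)}$ with $\xi(1,1)=5/4>\alpha$) to pass to the single-opening case, a step you omit. It then locates a \emph{single} disk $D(z, e^{15m/16}+e^{15m'/16})$ at the place where the combined thickening closes the last opening, and extracts a \emph{single} escape penalty $\lesssim e^{-m/128}$ from the event that the continuations of both walks do not disconnect that disk from $\infty$ across an annulus of ratio $e^{m/32}$. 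One such factor already suffices for the stated bound.

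Your attempt to produce two separate disks and two independent one-arm penalties fails on two counts. First, the topological extraction is not justified by the stated exclusions: $(E_m^2)^c$ and $(E_m^4)^c$ concern the thickening of $S_2$ at scale $m$, not $m'$, so they do \emph{not} rule out the possibility that $S_1 \cup \mathrm{THICK}_{m'}(S_2)$ already disconnects by itself; in that scenario the $S_1$-thickening at scale $m$ is irrelevant and there is no ``$z_1$'' to locate. Your symmetry argument (``the $S_2$-disk alone would place us in $E_m^2$ or $E_m^4$'') conflates the scales $m$ and $m'$. Second, even if both disks existed, your decoupling claim is unsound: $\tau_1^*$ and $\tau_2^*$ are defined through the \emph{joint} disconnection condition, so neither is a stopping time in its own walk's filtration. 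If you condition on all of $S_2$ to make $\tau_1^*$ a legitimate stopping time for $S_1$ and collect the first $e^{-m/128}$, you have already integrated $S_2$ out and cannot then re-extract an independent penalty from it. The paper's single-disk, single-penalty route sidesteps both issues.
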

\begin{proof}
    Define $E_1=\{S_1[0,\tau_{m-1}]\cap S_2[0,\tau_{m-1}] = \emptyset\}$, then 
$$\Pb\big(D_n(\ol\initialconfig) \cap E_1\big)\le ce^{-\xi(1,1)(m-1-l)}e^{-\alpha(n-m+1)}\le ce^{-um}e^{-\alpha(n-l)},$$
where we use the independence of $S_i[0,\tau_{m-1}]$ and $S_i[\tau_{m-1},\tau_{n}]$ by the strong Markov property, and $\xi(1,1)=5/4$ is the intersection exponent. Thus, it suffices to consider the non-disconnecting paths with only one opening. 
\begin{itemize}
    \item Let $(\eta_1,\eta_2^1)$ be a pair of non-disconnecting but intersected paths starting from the endpoint of $\initialconfig_1$ and $\initialconfig_2$ and stopped when reaching the circle $\partial \Dc_{m-1}$. By \eqref{eq:rw-disc-exp}, the total mass is $\asymp e^{-\alpha(m-1-l)}$.
    \item Let $\eta_2^2$ be a path starting from the endpoint of $\eta_2^1$ and stopped when reaching the circle $\Dc_{m'-1}$. The total mass is $\lesssim 1$. 
    \item Let $(\eta_3^1,\eta_4^1)$ be two paths starting from the endpoint of $(\eta_1,\eta_2^2)$ and stopped when there exists a disk $D(z,e^{15m/16}+e^{15m'/16})$ such that $(\eta_1\cup\eta_3)\cup(\eta_2^1\cup\eta_2^2\cup\eta_4)\cup D(z,e^{15m/16}+e^{15m'/16})$ disconnects the origin from infinity. The total mass is $\lesssim 1$.
    \item Let $(\eta_3^2,\eta_4^2)$ be a pair of non-disconnecting paths starting from the endpoint of $(\eta_3^1,\eta_4^1)$ and stopped when they reach the circle $\partial D(z,e^{m/32}(e^{15m/16}+e^{15m'/16}))$. The total mass of $\eta_3^2\cup\eta_4^2$ not disconnecting the circle $D(z,e^{15m/16}+e^{15m'/16})$ and infinity is $\lesssim e^{-m/128}$.
    \item Let $(\eta_3^3,\eta_4^3)$ be two paths starting from the endpoint of $(\eta_3^2,\eta_4^2)$ and stopped when reaching the circle $\partial \Dc_{m+1}$. The total mass is $\lesssim 1$.
    \item Let $(\eta_5,\eta_6)$ be a pair of non-disconnecting paths starting from the circle $\partial \Dc_{m+1}$ and stopped when reaching the circle $\partial \Dc_n$. By \eqref{eq:rw-disc-exp}, the total mass that $\eta_5\cup\eta_6$ does not disconnect the origin from infinity is $\lesssim e^{-\alpha({n-m-1})}$.
\end{itemize}
Multiplying the above factors, the total mass is $\asymp e^{-\alpha(m-1-l)}\times e^{-m/128}\times e^{-\alpha(n-m-1)}$. We obtain an extra cost for $\Pb(D_n(\ol\initialconfig)\cap (E_m^1)^c \cap (E_m^2)^c \cap (E_m^3)^c \cap (E_m^4)^c \cap E_{m,m'}^5)$ and conclude the proof.
\end{proof}
\begin{figure}[H]
    \centering
    \includegraphics[width=0.5\linewidth]{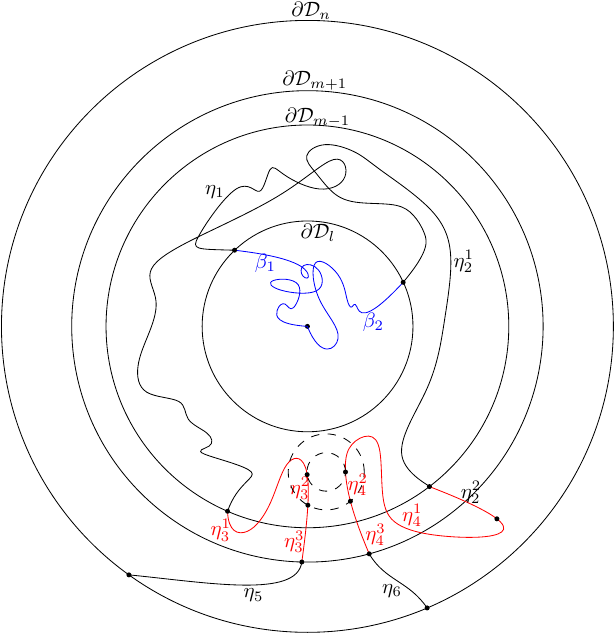}
    \caption{Lemma A.3.\ The radii of two dashed circles are $e^{15m/16}+e^{15m'/16}$ and $e^{m/32}(e^{15m/16}+e^{15m'/16})$ respectively.}
    \label{fig:3.13.jpg}
\end{figure}
\section{Proof of Proposition \ref{prop:Dnf2}}\label{appen:B}
\begin{proof}[Proof of Proposition \ref{prop:Dnf2}]
	Define $E_1=D_{n}(\ol\zeta)  \setminus  \widetilde D_n(\mathrm{THICK}_{\lfloor 29m/60\rfloor,m/2}(\ol\zeta))$. Using the KMT coupling in \eqref{eq:se11}, we couple $S_1$ with $W_1$ and $S_2$ with $W_2$, maintaining proximity within $Kn$ until exiting $\Dc_n$. This yields 
	\[
	 \widetilde D_n(\mathrm{THICK}_{\lfloor 29m/60\rfloor,m/2}(\ol\zeta))^c\subseteq \bigcup_{s=m/2}^{n}\bigcup_{s'=m/2}^{n}F_{s,s'},
	\]
    where $$F_{s,s'} = \big\{0\notin\fr\big(\mathrm{THICK}_s(\zeta_1\oplus S_1[0,\tau_n])\cup\mathrm{THICK}_{s'}(\zeta_2\oplus S_2[0,\tau_n]) \big)\big\},$$
    noting that $2Kn<e^{15m/32}\le\min\{e^{15s/16},e^{15s'/16}\}$. Following the methodology in the proof of Proposition \ref{prop:Dnf}, we have for $m/2\le s,s'\le n$,
	\[
	\Pb(D_{n}(\ol{\initialconfig})\,\cap\, F_{s,s'} )\le ce^{-um}e^{-\alpha (n-m/2)}.
	\]
	Summing over $s,s'$ gives $$\Pb(E_1)\le ce^{-um}e^{-\alpha(n-m/2)}.$$ Similarly, defining $E_2 =  \widetilde D_n(\mathrm{THICK}_{\lfloor 29m/60\rfloor,m/2}(\ol\zeta))\setminus D_{n}(\ol\zeta) $, we obtain $$\Pb(E_2)\le ce^{-um}e^{-\alpha(n-m/2)}.$$ This completes the proof of \eqref{eq:779}.
\end{proof}
\section{Proof of Proposition \ref{prop:536}}\label{appen:C}
\paragraph{Quasi-invariant measure from the origin.} For any two compact sets $K_1,K_2$, two points $x_1\in K_1,x_2\in K_2$ and $r>0$ such that $K_1\cup K_2\subseteq\Dc_r$, define the non-disconnecting event:
\[
A_r(\ol K,\ol x) = \Big\{0\in\fr\big((W_1[0,T_{\partial\Dc_r}]\cup K_1)\cup(W_2[0,T_{\partial\Dc_r}]\cup K_2)\big)\Big\},
\]where $W_1,W_2$ are two independent Brownian motions starting from $x_1,x_2$ respectively. 
Let $Q_r(\ol K,\ol x)$ denote the distribution of $e^{-r}(W_1,W_2)$ conditionally on $A_r(\ol K,\ol x)$. Define the non-disconnecting paths from the origin as:
\[
\wt \Xc := \big\{(\beta_1,\beta_2)\in \wt\Gamma_{0,\partial \Dc}^\Dc \times\wt\Gamma_{0,\partial \Dc}^\Dc :0\in\fr(\beta_1\cup\beta_2)\big\}.
\]
The quasi-invariant measure (from the origin) $\mathbf Q$ is a probability measure on $\wt\Xc$ (see e.g.\ \cite{L98} for existence) such that for some $u>0$:
\begin{equation}\label{eq:1229}
    \left \|Q_r(\ol K,\ol x)[T_{-r/2},T_0]-\mathbf Q[T_{-r/2},T_0]\right\|_\mathrm{{TV}} = O(e^{-ur}),
\end{equation}
uniformly in $(\ol K, \ol x)$. Here, $\left\|\cdot\right\|_\mathrm{TV}$ is the total variance norm, and $Q[T_{-r/2},T_0]$ denotes the distribution of $\ol\beta[T_{-r/2},T_0]$ for $\ol\beta\sim Q$.

\textbf{Quasi-invariant measure from infinity.} For any two compact sets $K_1,K_2$, two points $x_1\in K_1,x_2\in K_2$ and $r>0$ such that $K_1\cup K_2\subseteq\Dc_r^c$, define the non-disconnecting event:
\[
A_r^*(\ol K,\ol x) = \Big\{0\in\fr\big((W_1[0,T_{0}]\cup K_1)\cup(W_2[0,T_{0}]\cup K_2)\big)\Big\}\bigcap\Big\{T_0(W_1),T_0(W_2)<\infty\Big\},
\]where $W_1,W_2$ are two independent Brownian motions starting from $x_1,x_2$ respectively. Let $Q_r^*(\ol K,\ol x)$ be the probability measure of $(W_1[T_{r/2},T_0],W_2[T_{r/2},T_0])$, conditioned on the event $A_r^*(\ol K,\ol x)$. Let $\wt \Gamma_{\infty,\partial \Dc}^{\Dc^c}$ denote the collection of all paths starting from $\infty$ and stopped when reaching the unit circle. Define the non-disconnecting paths from infinity as follows:
\[
\wt\Xc^*:=\big\{(\beta_1,\beta_2)\in \wt \Gamma_{\infty,\partial \Dc}^{\Dc^c}\times\wt \Gamma_{\infty,\partial \Dc}^{\Dc^c}:0\in\fr(\beta_1\cup\beta_2)\big\}.
\] 
The quasi-invariant measure (from infinity) $\mathbf Q^*$ is a probability measure on $\wt\Xc^*$ such that for some $u>0$,
\begin{equation}\label{eq:C.2}
    \left \|Q_r^*(\ol K,\ol x)[T_{r/2},T_0]-\mathbf Q^*[T_{r/2},T_0]\right\|_\mathrm{{TV}} = O(e^{-ur}),
\end{equation}uniformly in $(\ol K,\ol x)$. Here, $Q[T_{r/2},T_0]$ denotes the distribution of $\ol\beta[T_{r/2},T_0]$ for $\ol\beta\sim Q$.
\begin{proof}[Proof of Proposition \ref{prop:536}]
    Let $(\beta_1,\beta_2)\in \wt\Gamma^{\Dc_n}_{x_1,0}\times\wt\Gamma^{\Dc_n}_{x_2,\partial\Dc_n}$ such that $z_n\in\fr((\wt\zeta_1\cup\initialconfig_1)\cup(\wt\zeta_2\cup\initialconfig_2))$. We do the following path decomposition:
    $$\beta_1 = \eta_1\oplus\omega\oplus \eta_2,\ \beta_2=\eta_1'\oplus\omega\oplus \eta_2',$$where $\eta_1$ is the segment of $\beta_1$ from $0$ to its first visit of $\partial \Dc_{n/2+1}(z_n)$, $\eta_2$ is the segment of $\beta_1^R$ from $x_1$ to its first visit of $\partial \Dc_{n/2}(z_n)$, and $\omega$ is the intermediate part of $\beta_1$. Decompose $\beta_2$ in the same way (with 0 replaced by $\partial \Dc_n$).

    We first sample $\ol\eta_1 = (\eta_1,\eta_1')$ according to the probability measure $\wt\mu_1/\left\|\wt\mu_1\right\|$, where
    \[
    \widetilde\mu_1=\int_{\partial \mathcal{D}_{n/2+1}(z_n)}\int_{\partial \mathcal{D}_{n/2+1}(z_n)}\mu_{0,y_1}^{\Dc_n\setminus\Dc_{n/2+1}(z_n)}\otimes\mu_{\partial \Dc_n,y_1'}^{\Dc_n\setminus\Dc_{n/2+1}(z_n)}[z_n\in\fr(\eta_1\cup\eta_1')]\sigma(dy_1,dy_1').
    \]
    By \eqref{eq:1229}, the distribution of $\ol\eta_1[T_{3n/4+1},T_{n/2+1}]$ has total variation distance $O(e^{-ur})$ to the probability measure $\mathbf Q_{n/2+1}^*(z_n)[T_{3n/4+1},T_{n/2+1}]$, where $\mathbf Q_{n/2+1}^*(z_n)$ is the push-forward of the quasi-invariant measure of $\mathbf Q^*$ under the map $x\mapsto e^{n/2}x+z_n$. To derive the total mass of $(\eta_1,\eta_1')$, we decompose $\eta_1$ and $\eta_1'$ according to their first visit of $D(z_n,ce^n)$. An estimate analogous to (6.9) in \cite{GLPS23}, combined with \eqref{eq:259}, implies that the total mass of $(\eta_1,\eta_1')$ is $\simeq ce^{-\alpha n/2}G_{\mathcal{D}}^{\fr}(z)$.

    Next, we sample $(\eta_2,\eta_2')$ according to the following law:
    \[
    e^{n/2}\circ Q_{n/3}(e^{-n/6}\wt\zeta,e^{-n/6}\ol x),
    \]where $\widetilde\zeta = (\widetilde\zeta_1,\widetilde\zeta_2)$. By \eqref{eq:C.2}, the law of $\ol\eta_2[T_{n/3},T_{n/2}]$ has total variation distance $O(e^{-ur})$ to the probability measure $\mathbf Q_{n/2}(z_n)$, where $\mathbf Q_{n/2}(z_n)$ is the push-forward of quasi-invariant measure $\mathbf Q$ under the map $x\mapsto e^{n/2}x+z_n$. The total mass of $(\eta_2,\eta_2')$ is $\Pb(\widetilde D_{n/2}(\mathrm{THICK}_{\lfloor 29n/180\rfloor,n/6}(\ol\zeta)))$.

    Finally, we consider the intermediate parts $(\omega,\omega')$. For any given $(\ol\eta_1,\ol\eta_2)$, we define the set
    \[
    \Wc(\ol\eta_1,\ol\eta_2):=\Big\{(\omega,\omega')\in \wt\Gamma_{y_1,y_2}^{\Dc_n}\times\wt\Gamma_{y_1',y_2'}^{\Dc_n}:z_n\in\fr\big((\eta_1\oplus\omega\oplus\eta_2)\cup(\eta_1'\oplus\omega'\oplus\eta_2')\big)\Big\}.
    \]
    Furthermore, we introduce the subset of $\Wc(\ol\eta_1,\ol\eta_2)$:
    \[
    \Wc'(\ol\eta_1,\ol\eta_2):=\Big\{(\omega,\omega')\in\Wc(\ol\eta_1,\ol\eta_2):\omega,\omega'\in\Dc_{3n/4+1}(z_n)\setminus\Dc_{n/3}(z_n)\Big\}.
    \]
    By \eqref{one-arm disconnection exponent}, we obtain the asymptotic equivalence
    \[
    \mu_{y_1,y_2}^{\Dc_n}\otimes\mu_{y_1',y_2'}^{\Dc_n}\big(\Wc(\ol\eta_1,\ol\eta_2)\big)\simeq\mu_{y_1,y_2}^{\Dc_n}\otimes\mu_{y_1',y_2'}^{\Dc_n}\big(\Wc'(\ol\eta_1,\ol\eta_2)\big).
    \]
    Let $\ol E$ denote the probability distribution of $(\ol\eta_1,\ol\eta_2)$ within the annulus $\Dc_{3n/4+1}(z_n)\setminus\Dc_{n/3}(z_n)$. We have established that $\ol E$ has total variance distance $O(e^{-ur})$ to the product measure $\mathbf Q_{n/2+1}^*(z_n)\otimes\mathbf Q_{n/2}(z_n)$. Therefore, 
    \begin{equation}\label{eq:1351}
    \ol E[\mu_{y_1,y_2}^{\Dc_n}\otimes\mu_{y_1',y_2'}^{\Dc_n}\big(\Wc(\ol\eta_1,\ol\eta_2)\big)]\simeq \mathbf Q_{n/2+1}^*(z_n)\otimes\mathbf Q_{n/2}(z_n)[\mu_{y_1,y_2}^{\Dc_n}\otimes\mu_{y_1',y_2'}^{\Dc_n}\big(\Wc'(\ol\eta_1,\ol\eta_2)\big)].
    \end{equation}
    By scaling and translation invariance of the quasi-invariant measures, the right hand side of \eqref{eq:1351} converges to a constant number $\wh c$, which is bounded away from 0 by separation lemmas and bounded away from infinity by \eqref{one-arm disconnection exponent} and standard estimates of Green's functions. By multiplying the masses $ce^{-\alpha n/2}G_{\mathcal{D}}^{\fr}(z)$, $\Pb(\widetilde D_{n/2}(\mathrm{THICK}_{\lfloor 29n/180\rfloor,n/6}(\ol\zeta)))$ and $\wh c$, we conclude the proof.
    
\end{proof}

\section{Proof of Proposition~\ref{prop:discrete and continuous 1}}\label{sec4.2:proof of prop3.12}
We first introduce necessary notation (applying to $i=1,2$ respectively): 
\begin{itemize}
	\item Let $\ol\zeta = (\zeta_1,\zeta_2)$ be a pair of paths in $\mathrm{NICE}_{n/6}(z_n)$ with endpoints $x_1,x_2$. Write $\widetilde\zeta = (\widetilde\zeta_1,\widetilde\zeta_2)$ for their thickened counterparts.
	\item Let $S_i$ and $W_i$ be the simple random walk and Brownian motion starting from $x_i$ respectively.
	\item Let $\xi_1 = S_1[0,\tau_{n/6}]$, $\xi_2 = S_2[0,\tau_n]$ and $\widetilde\xi_1 = W_1[0,T_{n/6}]$, $\widetilde \xi_2 = W_2[0,T_n]$.
	\item Decompose $\xi_i$ as $\xi_i = \eta_i\oplus\eta_{i+2}$ where $\eta_i$ is the segment of $\xi_i$ from $x_i$ to its first exit of $\partial B(z_n,d_ze^{n-1})$.
	\item Decompose $\widetilde \xi_i$ as $\widetilde \xi_i = \widetilde\eta\oplus\widetilde\eta_{i+2}$ where $\widetilde\eta_i$ is the segment of $\widetilde \xi_i$ from $x_i$ to its first exit of $\partial D(z_n,d_ze^{n-1})$.
	\item Let $\Ec_n(\ol\zeta)$ denote the event that $z_n\in\fr((\zeta_1\cup\xi_1)\cup(\zeta_2\cup\xi_2))$.
	\item Let $\widetilde \Ec_n(\ol\zeta)$ denote the event that $z_n\in\fr((\widetilde\zeta_1\cup\widetilde \xi_1)\cup(\widetilde\zeta_2\cup\widetilde \xi_2))$.
\end{itemize}
The following lemma establishes up-to-constants estimates for the probabilities of events $\Ec_n(\ol\zeta)$ and $\widetilde\Ec_n(\ol\zeta)$, achieved through a path-decomposition methodology.
\begin{lemma}
	For all $\ol\zeta \in \mathrm{NICE}_{n/6}(z_n)$ with $d_z\ge e^{-n/6}$, we have
	\begin{equation}
		\Pb\big(\Ec_n(\ol\zeta)\big) \asymp a(z)n^{-1}e^{-5\alpha n/6},\quad \Pb\big(\widetilde\Ec_n(\ol\zeta)\big) \asymp a(z)n^{-1}e^{-5\alpha n/6},
	\end{equation}    
	where $a(z)$ defined in \eqref{eq:frontier green's function} is the order of $G^{\fr}_{\Dc}(z)$.
\end{lemma}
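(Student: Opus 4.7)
The plan is to follow the path-decomposition strategy used in the proof of Lemma~\ref{lem:518}, with the difference that the initial configuration is now $\ol\zeta$ at scale $n/6$ around $z_n$ rather than a single point. Since $\ol\zeta\in\mathrm{NICE}_{n/6}(z_n)$, Lemma~\ref{lem:rw-disc-sep} provides a uniform (in $\ol\zeta$) well-separation of the endpoints $x_1,x_2$ on $\partial\Bc_{n/6}(z_n)$, so all constants appearing below can be made $\ol\zeta$-independent. On the Brownian side, Lemma~\ref{lem:separation lemma} applied to the thickened pair $\ol{\wt\zeta}$ yields the same uniform separation.

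I would split each continuation $\xi_i$ at the first exit of $\partial B(z_n,d_z e^{n-1})$ as $\xi_i=\eta_i\oplus\eta_{i+2}$, so that $\eta_1,\eta_2$ live in the inner annulus $B(z_n,d_z e^{n-1})\setminus\Bc_{n/6}(z_n)$ while $\eta_3,\eta_4$ live in the complementary outer region. The inner-annulus contribution is controlled by extending the non-disconnection of $\ol\zeta$ across an annulus of log-size $5n/6-1+\log d_z$; by \eqref{eq:rw-disc-exp} combined with the well-separation, this gives a factor $\asymp d_z^{-\alpha}e^{-5\alpha n/6}$, together with an $\asymp n^{-1}$ factor coming from the intermediate-region mass at scale $n$ (the frontier-point analogue of the $\asymp s$ factor in the frontier-disk estimate of Lemma~\ref{lem:518}).

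Outside this annulus, the analysis splits into the two geometric regimes of Lemma~\ref{lem:518}. If $|z|\ge 1/2$, the outer arms can be realized in tubular corridors connecting the exits of $\partial B(z_n,d_z e^{n-1})$ to their respective targets, which contributes a factor $\asymp d_z$. If $|z|<1/2$, the additional constraint that the outer arm does not disconnect $\partial B(z_n,2d_z e^n)$ from infinity contributes a factor $\asymp d_z^{1/4}$ via the one-arm non-disconnection estimate (Lemma~\ref{one-arm disconnection exponent}). Multiplying the inner and outer contributions reconstructs the function $a(z)$ from \eqref{eq:frontier green's function} and yields the desired bound $\Pb(\Ec_n(\ol\zeta))\asymp a(z)n^{-1}e^{-5\alpha n/6}$.

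The Brownian estimate $\Pb(\wt\Ec_n(\ol\zeta))\asymp a(z)n^{-1}e^{-5\alpha n/6}$ follows from exactly the same decomposition applied to $W_i,\wt\xi_i,\wt\eta_i$ with $\ol\zeta$ replaced by its thickening $\ol{\wt\zeta}$; the thickening produces only exponentially small corrections, handled as in Proposition~\ref{prop:Dnf2}. The main technical obstacle will be to ensure that the constants remain $\ol\zeta$-uniform when propagating non-disconnection across the inner annulus; this is achieved by the standard separation-lemma bootstrap used throughout the paper, together with the fact that niceness already gives us control on a macroscopic fraction of the scales inside $\Bc_{n/6}(z_n)$.
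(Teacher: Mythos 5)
Your overall decomposition matches the paper's path-decomposition proof: splitting at the annulus boundary $\partial B(z_n, d_z e^{n-1})$, invoking niceness of $\ol\zeta$ to keep constants $\ol\zeta$-uniform, pricing the inner annulus at $d_z^{-\alpha}e^{-5\alpha n/6}$ via \eqref{eq:rw-disc-exp}, and the outer region at $d_z$ or $d_z^{1/4}$ depending on $|z|\gtrless 1/2$; the Brownian case $\wt\Ec_n(\ol\zeta)$ is handled via thickening and KMT as in Proposition~\ref{prop:Dnf2}.

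However, your attribution of the $n^{-1}$ factor is wrong, and this is a substantive gap. You call it ``the intermediate-region mass at scale $n$, the frontier-point analogue of the $\asymp s$ factor in Lemma~\ref{lem:518}.'' But the event $\Ec_n(\ol\zeta)$ has no intermediate excursion: the intermediate-scale geometry near $z_n$ is already frozen in $\ol\zeta$, and the continuations $\xi_1,\xi_2$ only travel outward from $\partial\Bc_{n/6}(z_n)$. The $\asymp s$ factor in the frontier-disk estimate is a Green's-function count of excursions $\omega$ confined to $\Dc_{-2s/3}(z)$, a \emph{growing} quantity; imported literally at scale $n$ it would yield $\asymp n$, not $n^{-1}$, so the mechanism you invoke would land you off by a factor of $n^2$. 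The paper's $n^{-1}$ has a different source: it is a gambler's-ruin hitting probability. Recall $\xi_1 = S_1[0,\tau_{n/6}]$ must return to the small disk $\Bc_{n/6}$ \emph{near the origin}. For $|z|>1/2$, after the segment $\eta_3^1$ reaches $\partial\Bc_{n-3}$ (costing $\asymp d_z$), the probability of then hitting $\partial\Bc_{n/6}$ before $\partial\Bc_{n-2}$ is $\asymp (5n/6-2)^{-1}\asymp n^{-1}$; for $|z|\le 1/2$, the analogous event of hitting $\partial\Bc_{n/6}$ before $\partial B(z_n, d_ze^n/9)$ costs $\asymp[\log(d_ze^n/9)-n/6]^{-1}\asymp n^{-1}$. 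You should replace the mass-type mechanism with this hitting-probability estimate; with that change, the rest of your outline reproduces the paper's proof.
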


\begin{proof}
	We outline the lower bound; the upper bound follows similarly via decomposition. We first deal with the case $|z|>1/2$.
	\begin{itemize}
		\item The total mass of $(\eta_1,\eta_2)$ restricted to the event that $z_n\in\fr((\zeta_1\cup \eta_1)\cup(\zeta_2\cup \eta_2))$ and they are well-separated (see Lemma \ref{lem:rw-disc-sep}) at $\partial B(z_n,d_ze^{n-1})$ is of order $(d_ze^{n-1}/e^{n/6})^{-\alpha}$, noticing that the nice configuration $\ol\zeta$ is well-separated (see Lemma \ref{lem:rw-disc-sep}) at $\partial\Bc_{n/6}(z_n)$.
		\item The total mass of $\eta_4$ confined to a well-chosen tube of width $d_ze^{n-1}$ is of order 1.
		\item Let $\eta_3^1$ be the SRW started from the endpoint of $\eta_1$ that hits $\partial\Bc_{n-3}$ before exiting $\Bc_n$ such that $z_n\in\fr((\eta_3^1\cup\eta_1\cup\zeta_1)\cup(\zeta_2\cup\eta_2\cup\eta_4))$. By gambler's ruin estimate, the total mass is of order $\log({1}/{(1-d_z)})\asymp d_z$.
		\item Let $\eta_3^2$ be the SRW started from the endpoint of $\eta_3^1$ and hits $\partial\Bc_{n/6}$ before hitting $\partial \Bc_{n-2}$. The total mass of $\eta_3^2$ is of order $n^{-1}$ according to the gambler's ruin estimate.
	\end{itemize}
	The concatenation $(\eta_1\oplus \eta_3^1\oplus\eta_3^2,\eta_2\oplus\eta_4)$ satisfies the event $\Ec_n(\ol\zeta)$. By multiplying the probability measures derived above, we obtain
	\[
	\Pb\big(\Ec_n(\ol\zeta)\big) \asymp (d_ze^{n-1}/e^{n/6})^{-\alpha}\times 1 \times d_z \times n^{-1} \asymp  d_z^{1-\alpha}e^{-5\alpha n/6}n^{-1}.
	\]
	Next, we consider the case $|z|\le 1/2$.
	\begin{itemize}
		\item Let $(\lambda_1,\lambda_2)$ be a pair of non-disconnecting random walks from $(x_1,x_2)$ to the circle $\partial B(z_n,19d_z e^n/20)$ , satisfying the well-separation condition defined in \eqref{eq:disc-qua} with the endpoint of $\lambda_1$ lying in $B(0,d_z e^n/10)\cap\partial B(z_n,19d_z e^n/20)$. The total mass for such pairs $(\lambda_1,\lambda_2)$ is of order $d_z^{-\alpha} e^{-5\alpha n/6}$.
		\item Let $\lambda_3$ be a simple random walk starting from the endpoint of $\lambda_1$ that reaches $\partial \Bc_{n/6}$ before hitting $\partial B(z_n,d_ze^n/9)$. Using gambler's ruin estimates, the total mass of $\lambda _3$ is of order $[\log(d_ze^n/9)-\log(e^{n/6})]^{-1}\asymp n^{-1}$.
		\item Let $\lambda _4$ be a simple random walk starting from the endpoint of $\lambda_2$ that reaches $\partial \Bc_n$ before disconnecting $z_n$ and $\infty$. By Lemma \ref{one-arm disconnection exponent},  the total mass of $\lambda_4$ is of order $d_z^{1/4}$.
	\end{itemize}	
	The concatenation $(\eta_1\oplus\eta_3,\eta_2\oplus\eta_4)$ satisfies the event $\Ec_n(\ol\zeta)$. Multiplying the masses above yields 
	\[
	\Pb\big(\Ec_n(\ol\zeta)\big)\asymp d_z^{-\alpha} e^{-5\alpha n/6}\times n^{-1}\times d_z^{1/4}\asymp d_z^{1/4-\alpha}e^{-5\alpha n/6}n^{-1}.
	\]
    Similar arguments apply to $\Pb\big(\widetilde \Ec_n(\ol\zeta)\big)$. This completes the proof.
\end{proof}
Analogous to Proposition 7.9 in \cite{GLPS23}, we establish the following lemma, which implies Proposition~\ref{prop:discrete and continuous 1}.
\begin{lemma}
	For all $\ol\zeta\in\mathrm{NICE}_{n/6}(z_n)$ with $d_z\ge e^{-n/6}$,  the following equivalence holds:
	\begin{equation}
		\Pb\big(\Ec_n(\ol\zeta)\big)\simeq \Pb\big(\widetilde\Ec_n(\ol\zeta)\big).
	\end{equation}	
\end{lemma}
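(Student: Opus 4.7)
\medskip

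\noindent\textbf{Proof plan.} The strategy is to bound $|\Pb(\Ec_n(\ol\zeta)) - \Pb(\widetilde\Ec_n(\ol\zeta))|$ by the probability of the symmetric difference $\Ec_n(\ol\zeta) \triangle \widetilde\Ec_n(\ol\zeta)$ under a joint coupling of the two independent pairs $(S_1,S_2)$ and $(W_1,W_2)$. By the previous lemma, both probabilities are of order $a(z) n^{-1} e^{-5\alpha n/6}$, so it suffices to show that the symmetric difference has probability $O(e^{-un}) \cdot a(z) n^{-1} e^{-5\alpha n/6}$ for some $u>0$. We use a Skorokhod embedding in the spirit of \eqref{eq:se0} applied to each of the two pairs $(S_i,W_i)$ independently for $i=1,2$, so that on a high-probability event $H$ the trajectories satisfy $\max_{0\le t\le \tau_n^i\vee T_n^i}|S_i(t)-W_i(t)|\le e^{5n/8}$, with $\Pb(H^c)=O(e^{-10n})$; this error is negligible compared with the target bound.

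\medskip

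\noindent The next step is a scale-by-scale analysis of $(\Ec_n(\ol\zeta)\triangle\widetilde\Ec_n(\ol\zeta))\cap H$ analogous to the decomposition in the proof of Lemma~\ref{lem:compare K with tildeK}. The NICE property of $\ol\zeta$ at scale $n/6$ together with the thickening $\widetilde\zeta_i = \mathrm{THICK}_{\lfloor 29n/180\rfloor,n/6}(\zeta_i)$ is designed precisely so that the contribution of the initial segments to the frontier at $z_n$ is unaffected by perturbations of size $e^{5n/8}$. Hence on $H$ any discrepancy between the two frontier events must come from the newly-sampled parts $\xi_i$ vs.\ $\widetilde\xi_i$ (i.e.\ from differences in non-disconnection of $z_n$ by the enlarged paths at some intermediate radius). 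As in the proof of Lemma~\ref{lem:compare K with tildeK}, on this symmetric difference one of the following occurs: either the paths non-disconnect $z_n$ but the $e^{5n/8}$-sausage disconnects (a ``near-disconnection" event), or conversely the sausage fails to cover a thin connecting corridor that exists for the non-sausaged version.

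\medskip

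\noindent The probability of each such near-disconnection event is bounded by the same path-decomposition scheme used in the proof of the previous lemma, with one additional ``one-arm surviving thin neck" factor of size $O(e^{-un})$ coming from Lemma~\ref{one-arm disconnection exponent} (applied at the scale of the discrepancy). Summed over the relevant dyadic scales between $n/6$ and $n$, and combined with the usual non-disconnection factors of total order $a(z) n^{-1} e^{-5\alpha n/6}$ along the rest of the path (cf.\ the decomposition just carried out for the main estimate), this yields
\begin{equation*}
\Pb\big((\Ec_n(\ol\zeta)\triangle\widetilde\Ec_n(\ol\zeta))\cap H\big) \;\lesssim\; e^{-un}\, a(z) n^{-1} e^{-5\alpha n/6},
\end{equation*}
which together with $\Pb(H^c)=O(e^{-10n})$ gives the claim.

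\medskip

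\noindent The main obstacle is the bookkeeping needed to produce the extra $e^{-un}$ factor uniformly in the scale where the KMT discrepancy becomes visible: one must argue, as in the proof of Lemma~\ref{lem:compare K with tildeK}, that whenever a small perturbation switches the non-disconnection status at $z_n$, one can locate a thin corridor of width $O(e^{5n/8})$ at some macroscopic radius $r\in[n/6,n]$ whose survival under a one-arm event is cheap, and then integrate these local estimates against the decomposition in the previous lemma without spoiling its up-to-constants total mass. Since the argument is essentially a repetition of the earlier one-arm penalty bookkeeping, we expect to be able to adapt it almost verbatim from the proof of Lemma~\ref{lem:compare K with tildeK}.
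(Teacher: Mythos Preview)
Your overall strategy---couple the random walks and Brownian motions, bound the symmetric difference, and extract an extra one-arm penalty $e^{-un}$ from a thin-neck argument---is the right one, and it is exactly the route the paper indicates by pointing to Proposition~7.9 of \cite{GLPS23} (and, closer to hand, the proof of Proposition~\ref{prop:Dnf2} in Appendix~\ref{appen:B}). So the shape of the argument is fine.

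However, there is a real issue with the specific coupling you invoke. You propose the Skorokhod-type coupling of \eqref{eq:se0} with error $e^{5n/8}$. That works for Lemma~\ref{lem:compare K with tildeK} because there the smallest relevant scale is $e^{3n/4}$, and $e^{5n/8}\ll e^{3n/4}$. In the present lemma the configuration $\ol\zeta$ lives at scale $e^{n/6}$ around $z_n$, and the frontier event is sensitive to what happens all the way down to that scale. Since $e^{5n/8}\gg e^{n/6}$, a perturbation of size $e^{5n/8}$ completely swamps the geometry near $z_n$: the ``thin corridor'' you want to locate need only have width $e^{5n/8}$, and at radius $e^{n/6}$ that is not thin at all---no one-arm penalty can be squeezed out, and the NICE thickening (of width at most $e^{5n/32}$ at scale $n/6$) does not absorb an $e^{5n/8}$ error.

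The fix is to use the KMT coupling of Lemma~\ref{lem:KMT1} (eq.~\eqref{eq:se11}) instead, exactly as in the proof of Proposition~\ref{prop:Dnf2}. The KMT error is only $Kn$, which is comfortably below the thickening width $e^{15r/16}$ at every relevant scale $r\ge \lfloor 29n/180\rfloor$. With that change, your symmetric-difference decomposition into events $F_{s,s'}$ (as in Appendix~\ref{appen:B}) goes through: the NICE condition on $\ol\zeta$ absorbs the $Kn$ fuzz near scale $n/6$, and at each larger scale the one-arm argument of Proposition~\ref{prop:Dnf} yields the desired $e^{-un}$ penalty. Once you swap in KMT, your sketch is essentially the paper's intended argument.
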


\end{document}